\documentclass[reqno]{amsart}
\usepackage[colorlinks=true, linkcolor=, citecolor=blue]{hyperref}
\setlength\parskip{1pt}
\usepackage{graphicx,subcaption}
\usepackage[utf8]{inputenc}
\usepackage[backend=bibtex]{biblatex}
\usepackage{amssymb}
\usepackage{amsmath}
\usepackage{graphicx}
\usepackage{amsthm,amsmath}
\usepackage{amsfonts}
\usepackage{graphicx}
\usepackage{enumitem}
\usepackage{bm}
\usepackage[utf8]{inputenc}
\numberwithin{equation}{section}
\newtheorem{theo}{Theorem}[section]

\newtheorem{lemma}[theo]{Lemma}

\newtheorem{prop}[theo]{Proposition}
\newtheorem{proposition}[theo]{Proposition}

\theoremstyle{definition}

\newtheorem{definition}[theo]{Definition}
\theoremstyle{remark}

\usepackage{lmodern}
\usepackage{caption}
\usepackage{color}
\usepackage{dsfont}

\newcommand\tempskipped[1]{}

\renewcommand\Re{\operatorname{Re}}
\renewcommand\Im{\operatorname{Im}}

\usepackage{tikz}
\usetikzlibrary{calc}
\usepackage{graphicx}

\newcommand\cQ{\mathcal{Q}}
\newcommand\cE{\mathcal{E}}
\newcommand\cS{\mathcal{S}}
\newcommand\cX{\mathcal{X}}
\newcommand\cT{\mathcal{T}}

\newcommand\dm{\diamond}
\newcommand\Dm{\diamondsuit}

\newcommand\Unif{{\mbox{\textsc{Unif(}$\delta$\textsc{)}}}}
\newcommand\Uniffff{{\mbox{\textsc{Unif(}$1$\textsc{)}}}}

\newcommand\Uniff{{\mbox{\textsc{Unif(}$\delta,r_0,\theta_0$\textsc{)}}}}

\newcommand\vcirc[1]{v^\circ_1,\ldots,v^\circ_{#1}}
\newcommand\svcirc[1]{\sigma_{v^\circ_1}\ldots \sigma_{v^\circ_{#1}}}
\newcommand\vbullet[1]{v^\bullet_1,\ldots,v^\bullet_{#1}}
\newcommand\muvbullet[1]{\mu_{v^\bullet_1}\ldots\mu_{v^\bullet_{#1}}}

\bibliography{Completebibliography.bib}

\begin{document}

\title{Universality of Ising spin correlations on critical doubly-periodic graphs}
\author[Rémy Mahfouf]{Rémy Mahfouf$^\mathrm{A}$}

\thanks{\textsc{${}^\mathrm{A}$ Université de Genève.}}

\thanks{\emph{E-mail:} \texttt{remy.mahfouf@unige.ch}}

\maketitle

\begin{abstract}
We establish conformal invariance of Ising spin correlations on critical doubly periodic graphs, showing that their scaling limit coincides with that of the critical square lattice, as originally proved in \cite{CHI}. To overcome the absence of integrability and quantitative full plane constructions in the periodic setting, we combine discrete analytic tools with random cluster methods. This result completes the universality picture for periodic lattices, whose criticality condition was identified in \cite{cimasoni-duminil} and whose conformal structure and interface convergence were obtained in \cite{Che20}.
\end{abstract}

\section{Introduction}
\label{sec:introduction}
\subsection{General context} The Ising model, introduced by Ising and Lenz \cite{ising1925beitrag}, remains one of the most studied models in probability and statistical mechanics, especially in the planar nearest-neighbour case without external field (see, e.g., the monographs \cite{friedli-velenik-book,mccoy-wu-book,palmer2007planar} and references therein). We adopt a convention dual to the standard setup, assigning $\pm1$ spins to the set $G^\circ$ of faces of a planar graph $G$. When $G$ is finite and connected, one specifies to each edge $e \in E(G)$, which separates two faces $v^\circ_{\pm}(e) \in G^\circ$, a positive coupling constant $J_e$. This ferromagnetic model favours configurations in which neighbouring spins align. For a fixed inverse temperature $\beta > 0$, one defines a probabilistic model on spin configurations $\sigma \in \{\pm1\}^{G^\circ}$, with partition function given by
\begin{equation}
\label{eq:intro-Zcirc}
\mathcal{Z}(G)\ :=\ \sum_{\sigma:G^\circ\to\{\pm 1\}}\exp\Bigl[\beta\sum_{e\in E(G)}J_e\sigma_{v^\circ_-(e)}\sigma_{v^\circ_+(e)}\Bigr].
\end{equation}
In the homogeneous Ising model on the square lattice, a phase transition occurs between the paramagnetic and ferromagnetic phases as the inverse temperature varies. At the critical temperature- the \emph{Curie point} observed in early magnetic experiments \cite{curie1894possibilite} - the model exhibits remarkable features, revealing deep links between statistical mechanics, complex analysis, and algebra. Following Smirnov’s introduction of discrete fermionic observables on the square lattice \cite{Smi-ICM06,Smirnov_Ising}, a series of works established the conformal invariance (or covariance) of the critical model \cite{Smirnov_Ising,CHI,CheIzy13,HS-energy-Ising,Izy-phd}, as predicted by Conformal Field Theory (see, e.g., \cite{Zam,Zam2}). In essence, these results show that, on bounded domains, the critical Ising model admits a continuous scaling limit as the mesh size tends to zero, and that limits on conformally equivalent domains are related by the corresponding conformal map. This programme was later extended to \emph{critical isoradial graphs} for correlation functions \cite{CIM-universality}, while the convergence of interfaces \cite{BenHon19,SmirnovEtAlu} follows from essentially the same arguments as in the square-lattice case. Further studies on isoradial graphs investigated \emph{near-critical models} with Baxter’s Z-invariant weights \cite{BdTR1,BdTR2,Baxter-book}, approaching criticality \cite{park2018massive,park-iso,CIM-universality} and confirming long-standing predictions linking the model to the massive Dirac equation \cite{mccoy1977painleve,sato1979holonomic}.

Extending this framework beyond the isoradial case introduces significant challenges, due to the lack of a natural discrete conformal structure for analysing Ising fermions. The notion of $s$-embeddings, introduced in \cite{Ch-ICM18,Che20}, provides such a structure, with critical doubly-periodic graphs as a fundamental new example. The criticality condition for these graphs, obtained by Cimasoni and Duminil-Copin \cite{cimasoni-duminil} via a connection to the dimer model, is expressed algebraically in terms of edge weights on the fundamental domain. In \cite{Che20}, it was shown that every critical Ising model on a doubly-periodic graph admits (up to rotation, scaling, and complex conjugation) a \emph{unique} associated $s$-embedding. This enabled the construction of a discrete complex analysis framework, allowing Chelkak in that same article to prove the convergence of FK-Ising interfaces to SLE$(16/3)$, as in the square-lattice case. Studying the critical Ising model on doubly-periodic graphs falls within Kadanoff's \emph{universality} framework, asserting that microscopic details of the model should have little impact on its large-scale behaviour, which depends only on a few global parameters. However, while the convergence of FK interfaces is now established for such graphs, the behaviour of correlation functions remains unresolved. The powerful integrable techniques used on isoradial graphs, which rely on constructing full-plane fermionic observables with prescribed singularities and asymptotics, do not extend to critical doubly-periodic lattices. In this work, we establish the universality of Ising spin correlations on critical doubly-periodic graphs using soft discrete complex analysis methods combined with random-cluster techniques.

\subsection{The critical Ising model on doubly-periodic graphs}

When working on isoradial lattices, the notion of a critical model often appears naturally by choosing combinatorial weights that admit a \emph{geometric} interpretation (see, e.g., \cite{ChSmi2} for the FK-Ising model, \cite{grimmett2013universality,grimmett2014bond} for percolation, \cite{BdTR1,BdTR2,boutillier2011critical} for dimers, and \cite{ChSmi1} for random walks and discrete harmonic functions). These critical weights turn out to be invariant under the star-triangle transformation of the associated model. On isoradial grids there exist two notions of star-triangle deformation. On the one hand, there is a combinatorial star-triangle transformation of the model's weights, which performed in a suitable way, does not change the probability measure of events far from the local modification. On the other hand, there exists an exchange of \emph{train tracks} in isoradial lattices, corresponding to some purely geometric moves on isoradial lattices. The geometric formulae used to define critical isoradial weights make those two frameworks match. It is known that general doubly-periodic lattices equipped with the Ising model cannot admit an isoradial embedding \cite{KenyonSchlenker}. Therefore, trying to find a geometric criticality condition on the weights starting from a given isoradial picture does not work for the generic periodic case. Nevertheless, a way forward was found in two different articles using the exact bosonisation \cite{Dubedat-bos} of the Ising model as a dimer model. When the doubly-periodic graph has a $\mathbb{Z}^2$ combinatorial structure, a connection to the dimer context was first established by Li \cite{li2012critical} and later generalised in \cite{cimasoni-duminil} to all doubly periodic graphs. To be more precise, let us introduce some additional notation. Consider a weighted planar graph $(G,x)$ embedded in the plane up to homeomorphism preserving the cyclic ordering of edges. Denote its vertices by $G^{\bullet}$ and its faces by $G^{\circ}$. The bipartite graph $\Lambda(G) := G^{\bullet} \cup G^{\circ}$ has edges connecting each vertex to the faces it belongs to. Each quad $z_e = (v^{\bullet}_0 v^{\circ}_0 v^{\bullet}_1 v^{\circ}_1)$ of $\Lambda(G)$ corresponds to an edge $e$ of $G$, linking $v^{\bullet}_0$ and $v^{\bullet}_1$ and separating the faces $v^{\circ}_0$ and $v^{\circ}_1$. We denote by $e^\star$ the dual edge linking $v^{\circ}_0$ and $v^{\circ}_1$ in the graph $G^{\circ}$, with $(e^\star)^\star = e$. Under this identification, one can parametrise the coupling constant $x(e)$ using the abstract angle as
\begin{equation}
\label{eq:x=tan-theta}
\theta_{z(e)}\ :=\ 2\arctan x(e)\ \in\ (0,\tfrac{\pi}{2}), 
\quad x(e):=\exp[-2\beta J_e].
\end{equation}
It turns out that in the periodic case, the criticality condition can be written as a purely algebraic statement on the weights $x(e)$ over the fundamental domain. Let $(\mathcal{G},x)$ be a planar, non-degenerate, locally finite, doubly-periodic weighted graph. Denote by $\Gamma$ its fundamental domain, naturally embedded in the torus (with no specification on that torus embedding). Let $\mathcal{E}(\Gamma)$ be the set of even subgraphs of $\Gamma$, and $\mathcal{E}_0(\Gamma)$ the subset of even subgraphs of $\Gamma$ that wind an \emph{even} number of times around each of the two directions of the torus. Finally, denote $\mathcal{E}_1(\Gamma):=\mathcal{E}(\Gamma) \setminus \mathcal{E}_0(\Gamma)$. With this terminology, the criticality condition on doubly-periodic graphs is stated in the following theorem.
\begin{theo}[Cimasoni-- Duminil-Copin\cite{cimasoni-duminil}]\label{thm:cimasoni-duminil}
The Ising model $(\mathcal{G},x)$ is critical if and only if
\begin{equation}
\sum\limits_{\gamma \in \mathcal{E}_0(\Gamma)}x(\gamma)
-\sum\limits_{\gamma \in \mathcal{E}_1(\Gamma)}x(\gamma)=0,
\end{equation}
where $x(\gamma):=\prod_{e\in \gamma}x_e$. Moreover, the sign of the LHS of the above equation fixes the phase of the model (paramagnetic or ferromagnetic).
\end{theo}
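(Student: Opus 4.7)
My plan is to use the dimer / Kasteleyn-on-the-torus representation of the Ising model and analyse the associated spectral curve. Combining the contour (Peierls) expansion with a passage to an $n\times n$ cover $\Gamma_n$ of the fundamental domain gives, up to a positive factor $C_n(\beta)$ that is real-analytic in $\beta$,
\[
\mathcal{Z}(\Gamma_n)\ =\ C_n(\beta)\sum_{\gamma \in \mathcal{E}(\Gamma_n)} x(\gamma).
\]
Criticality is detected as a non-analyticity in $\beta$ of the pressure $f(\beta) := \lim_{n\to\infty} n^{-2}\log \mathcal{Z}(\Gamma_n)$, so it suffices to locate the singularity of the polygon sum.

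The second step is Kasteleyn's theorem on the torus. Via Fisher decoration and the bosonisation of \cite{Dubedat-bos}, the polygon sum on $\Gamma_n$ equals an explicit signed combination
\[
\sum_{\gamma \in \mathcal{E}(\Gamma_n)} x(\gamma)\ =\ \tfrac{1}{2}\bigl(-\Pf K^{00}_n + \Pf K^{10}_n + \Pf K^{01}_n + \Pf K^{11}_n\bigr)
\]
of Pfaffians of four Kasteleyn operators $K^{\alpha\beta}_n$ on $\Gamma_n$, carrying $(-1)^\alpha, (-1)^\beta$-twisted boundary conditions across the two generators of the torus. The underlying mechanism is an Arf-invariant count: each polygon $\gamma$ contributes to $\Pf K^{\alpha\beta}_n$ with a sign depending only on its winding parities $w_1(\gamma), w_2(\gamma) \in \{0,1\}$ and $(\alpha,\beta)$, and inclusion-exclusion across $(\alpha,\beta)$ exactly separates $\mathcal{E}_0$ from $\mathcal{E}_1$. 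Restricted to a single fundamental domain, this identifies $\sum_{\mathcal{E}_0(\Gamma)} x(\gamma) - \sum_{\mathcal{E}_1(\Gamma)} x(\gamma)$ with the value $P(1,1)$ of the characteristic polynomial $P(z,w) := \det K(z,w)$ of the fundamental-domain Kasteleyn operator.

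By periodicity each Pfaffian factorises in Fourier as a product over roots of unity of $P(z,w)$, which in the thermodynamic limit yields
\[
f(\beta)\ =\ \tfrac{1}{2}\int_0^{2\pi}\!\!\int_0^{2\pi}\log\bigl|P(e^{i\theta_1}, e^{i\theta_2})\bigr|\,\frac{d\theta_1\,d\theta_2}{(2\pi)^2}\ +\ \text{analytic in $\beta$}.
\]
This expression is real-analytic in $\beta$ unless $P$ vanishes somewhere on the unit torus $\mathbb{T}^2$. By the Kenyon--Okounkov--Sheffield theory of bipartite dimer spectral curves, $P$ is a Harnack polynomial --- non-negative on $\mathbb{T}^2$ with zero locus contained in the real points $\{(\pm 1, \pm 1)\}$, which are related by the $\mathbb{Z}_2$-symmetries inherited from the Ising-to-dimer reduction, so their simultaneous vanishing reduces to the single algebraic condition $P(1,1)=0$. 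Hence criticality is equivalent to $\sum_{\mathcal{E}_0}x(\gamma) = \sum_{\mathcal{E}_1}x(\gamma)$; the sign of $P(1,1)$ then distinguishes the two phases, by monotonicity of $f$ in $\beta$ together with the high- and low-temperature asymptotics.

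The main obstacle is the Arf / sign-count step matching polygon parity classes on the torus with the four spin structures on a graph lacking any canonical geometric orientation --- this is the technical heart of \cite{cimasoni-duminil}, where the bosonisation formalism of \cite{Dubedat-bos} is essential. A secondary point is excluding zeros of $P$ on $\mathbb{T}^2$ away from $(\pm 1, \pm 1)$, which follows from the Harnack property together with the specific bipartite structure of the Fisher graph.
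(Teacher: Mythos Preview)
The paper does not give its own proof of this theorem: it is quoted as a result of Cimasoni--Duminil-Copin \cite{cimasoni-duminil}, with only the one-line remark that ``this criticality condition is derived from the bosonisation of the Ising model and from the fact that the associated Kac--Ward determinant (evaluated at a pair of non-vanishing complex numbers) vanishes at $(1,1)$.'' So there is no in-paper proof to compare against.

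Your sketch is broadly consistent with that remark and with the strategy of the original reference: pass to the dimer/Kasteleyn (equivalently Kac--Ward) formulation on the torus, express the polygon sum as a signed combination of four Pfaffians indexed by spin structures, identify the combination $\sum_{\mathcal{E}_0}-\sum_{\mathcal{E}_1}$ with $P(1,1)$ for the characteristic polynomial $P(z,w)$, and use the Harnack property to localise possible zeros on $\mathbb{T}^2$. Two points in your outline would need more care to be a proof rather than a plan. First, the reduction from the four real points $(\pm1,\pm1)$ to the single condition $P(1,1)=0$ is not a generic Harnack fact; in \cite{cimasoni-duminil} it relies on the specific $\mathbb{Z}_2\times\mathbb{Z}_2$ structure of the Ising-to-dimer map and the positivity of the other three Pfaffian combinations, which you allude to but do not justify. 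Second, the identification of the phase with the sign of $P(1,1)$ does not follow from monotonicity of $f$ alone (the free energy is monotone on both sides of criticality); one rather uses that $P(1,1)$ changes sign along the one-parameter family $\beta\mapsto x(\beta)$, together with an identification of the high/low temperature limits. These are exactly the technical points handled in \cite{cimasoni-duminil}, and you correctly flag the Arf/sign bookkeeping as the main obstacle.
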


This criticality condition is derived from the bosonisation of the Ising model and from the fact that the associated Kac-Ward determinant (evaluated at a pair of non-vanishing complex numbers) vanishes at $(1,1)$. As already mentioned, this condition is purely algebraic and \emph{does not} specify a natural doubly-periodic embedding into the plane, which would allow one to study a natural continuous limit of the associated random geometry. This question was settled by Chelkak in \cite[Lemma 2.3]{Che20} as follows.
\begin{lemma}[Lemma 2.3 in \cite{Che20}]
Let $(G,x)$ be a doubly-periodic graph carrying the critical Ising model in the sense of Theorem \ref{thm:cimasoni-duminil}. Then there exists (up to translation, rotation, scaling and complex conjugation) a unique doubly-periodic $s$-embedding $\mathcal{S}$ attached to $(G,x)$.
\end{lemma}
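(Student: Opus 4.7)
The plan is to reduce the existence and uniqueness of $\mathcal{S}$ to the analysis of the doubly-periodic kernel of the Kac--Ward (Dirac-type) operator associated to $(G,x)$, and to match its non-triviality with the criticality condition of Theorem~\ref{thm:cimasoni-duminil}.

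First I would recall the characterisation of $s$-embeddings via propagation spinors. Following \cite{Ch-ICM18,Che20}, a realisation $\mathcal{S}$ of $\Lambda(G)=G^\bullet\cup G^\circ$ in $\mathbb{C}$ is an $s$-embedding iff there exists a non-vanishing complex spinor $\eta$ on the corners of $G$ satisfying the propagation equation with weights $\theta_{z(e)}/2$, the embedding itself being recovered by the combinatorial integral
\begin{equation*}
\mathcal{S}(v) - \mathcal{S}(v_0)\ =\ \int_{v_0}^{v} \eta_c^2\, d\ell_c
\end{equation*}
along any path in $\Lambda(G)$. Path-independence of this integral is precisely the propagation equation, and the tangential geometry of each quad with prescribed half-angles $\theta_{z(e)}/2$ is then automatic.

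On a doubly-periodic graph, the propagation equation restricted to Bloch spinors with phases $(z,w)\in(\mathbb{S}^1)^2$ reduces to a finite linear system over the fundamental domain $\Gamma$ whose determinant (up to a harmless prefactor) is the Kac--Ward characteristic polynomial $P(z,w)$. The main algebraic input I would use is the identification $P(1,1)=0 \Leftrightarrow \text{criticality}$, which is the content of Theorem~\ref{thm:cimasoni-duminil}. In the critical regime this yields a genuinely $\Gamma$-periodic spinor $\eta$; its square $\eta^2$ is then also $\Gamma$-periodic, so that the line integrals $\tau_{1,2}:=\oint_{\gamma_{1,2}}\eta^2$ over the two fundamental cycles are well-defined complex numbers, playing the role of the lattice periods of $\mathcal{S}$ in $\mathbb{C}$. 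For uniqueness, I would quotient $\ker P(1,1)$ by the natural symmetries of the propagation equation: the $\mathbb{C}^*$-action $\eta\mapsto\lambda\eta$ produces $\mathcal{S}\mapsto\lambda^2\mathcal{S}$ (rotation and scaling), the base point $v_0$ contributes translations, and complex conjugation $\eta\mapsto\overline\eta$ accounts for the mirrored embedding; provided the zero of $P$ at $(1,1)$ is simple in the appropriate sense, this quotient is a point and uniqueness follows.

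The main obstacle, in my view, is the passage from the algebraic vanishing of $P(1,1)$ to a genuine doubly-periodic embedding. One must show that $\eta$ can be chosen nowhere-vanishing, so that every quad acquires positive area and the tangential inscribed circle is non-degenerate; verify that $\tau_1/\tau_2\notin\mathbb{R}$, so that $\mathcal{S}$ has two independent real periods; and control the multiplicity of the zero of $P$ at $(1,1)$ in order to secure the uniqueness statement. These geometric non-degeneracy inputs go beyond the purely algebraic criticality criterion of Theorem~\ref{thm:cimasoni-duminil} and must be extracted from the specific structure of the Kac--Ward operator at a critical point of a non-degenerate, locally finite, doubly-periodic Ising model.
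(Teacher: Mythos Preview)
The paper does not give its own proof of this lemma; it is quoted verbatim from \cite[Lemma~2.3]{Che20}. Your outline is in fact the strategy used there: one identifies doubly-periodic $s$-embeddings with non-trivial $\Gamma$-periodic complex solutions $\cX$ of the three-term propagation equation \eqref{eq:3-terms}, whose existence is equivalent to the vanishing of the Kac--Ward determinant at $(1,1)$, i.e.\ to the criticality condition of Theorem~\ref{thm:cimasoni-duminil}. The symmetries you list (the $\mathbb{C}^*$-action $\cX\mapsto\lambda\cX$, base-point translation, and complex conjugation) are exactly those quotiented out in the uniqueness statement.

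That said, your proposal is a sketch rather than a proof: you correctly isolate the three substantive obstacles (non-vanishing of $\cX$, $\mathbb{R}$-linear independence of the periods $\tau_1,\tau_2$, and one-dimensionality of the periodic kernel) but do not resolve any of them. In \cite{Che20} these are handled by exploiting the specific structure of the Kac--Ward operator at criticality --- in particular, the simplicity of the zero at $(1,1)$ is the heart of the uniqueness argument and requires genuine work beyond the algebraic identity $P(1,1)=0$. A minor point of formulation: the ``integral'' you write as $\int\eta_c^2\,d\ell_c$ is in this discrete setting the telescoping sum \eqref{eq:cS-def}, namely $\cS(v^\bullet(c))-\cS(v^\circ(c))=(\cX(c))^2$, whose consistency around each quad is exactly the propagation equation; there is no auxiliary measure $d\ell_c$.
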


Note that this criticality condition can also be seen for the so-called FK-Ising model, whose definition is recalled below. Using the classical Kramers-Wannier duality, set the dual weight 
\begin{equation}\label{eq:Kramers-Wannier}
(x_{e})^\star:= \frac{1-x_e}{1+x_e}.
\end{equation}
When $G$ is a finite planar graph, the model with \emph{wired} boundary conditions can be embedded into the sphere, where a distinguished face $v_{\mathrm{out}}^{\circ}$ which interacts with \emph{all} boundary faces and carries a single fixed spin. The FK-Ising model on $G^\circ$ can then be interpreted as a probability measure on even subgraphs, such that for any subgraph $C$ of $G^\circ$ we have
\begin{equation}
	\mathbb{P}^{G^\circ}_{\mathrm{FK}}(C):=\frac{1}{Z_{\mathrm{FK}}(G^{\circ},(x_e)_{e\in G})}\,
	2^{\#\textrm{clusters}(C)}
	\prod_{e^\star\in C}(x_{e^\star})^\star
	\prod_{e^\star\not \in C}\bigl(1-(x_{e^\star})^\star\bigr),
\end{equation}
where $e^\star$ denotes the dual edge of $G$ linking the vertices $v^{\pm}_{e^\star} \in G^{\circ}$, $\#\mathrm{clusters}(C)$ is the number of clusters in the subgraph $C$, and $Z_{\mathrm{FK}}(G^{\circ}, (x_e)_{e \in G})$ is a normalization constant. It is standard (see, e.g., \cite{duminil-parafermions}) to pass to the infinite-volume limit, thereby defining a full-plane FK-Ising measure. In this paper, we only consider graphs satisfying the strong box-crossing property of Theorem \ref{thm:RSW-FK}, ensuring that the infinite-volume limit is unique and independent of the choice of boundary conditions on finite graphs. The (combinatorial) link between the Ising model with wired boundary conditions and the FK-Ising model with wired boundary conditions, known as the Edwards-Sokal coupling introduced in \cite{edwards1988generalization}, reads as follows:

\begin{itemize}
	\item \textbf{Ising model to FK-Ising model:} Start with a spin configuration $\sigma \in \{\pm 1 \}^{G^{\circ}}$ and, independently for each pair of aligned neighbouring faces $v_{\pm}^\circ\in G^{\circ}$, sample a Bernoulli random variable with parameter $(x_{e^\star})^\star$. The faces $v_{\pm}^\circ$ are connected in the random-cluster model if and only if the associated Bernoulli variable equals $1$. This constructs a random graph in $G^{\circ}$.
	\item \textbf{FK-Ising model to Ising model:} For each cluster $C$ in $G^\circ$, sample (independently from other clusters) a fair $\pm 1$ random variable and assign the resulting spin to \emph{all} vertices attached to $C$. 
\end{itemize}

For the rest of the article, fix some critical doubly-periodic graph and let $ \cS$ be a canonical doubly-periodic $s$-embedding associated with it. For simplicity, we assume that all the edges of $\cS$ have length comparable to $1$ and denote by $\cS^{\delta}:= \delta \cdot \cS $ its scaled version. Here \emph{there is no natural notion of scale} $\delta$, contrary to the isoradial case where primal to dual edges in $\Lambda(G)$ all have a common length. In the periodic context, on $\cS^\delta $ all the edges of $\Lambda(G)$ have length comparable to $\delta$, up to some uniform multiplicative constant, while all the angles are uniformly bounded away from $0$ and $\pi$. All the estimates in the following theorems will implicitly depend on those factors governing the geometry of $\cS$. This setup for $\cS^\delta $ can be summarised as the assumption \Unif\, introduced in  \cite{Che20} and recalled in Section \ref{sub:notation}.  

Following the work of \cite{Che20}, it is now understood that the criticality condition has much greater implications than merely a phase transition from paramagnetic to ferromagnetic: it can already be formulated as a manifestation of conformal invariance of the critical model. Before giving precise details about the convergence of FK-interfaces, let us state the first major result of  \cite{Che20} in the critical periodic context.  
Fix an annulus $\Box(\ell):=[-3\ell;3\ell]^2 \setminus [-\ell;\ell]^2 $ and set $\Box^\delta(\ell):=\Box(\ell) \cap \cS^\delta$ a discretization of the original lattice up to $10\delta$ (meaning here that the Hausdorff distance between the boundaries of $\Box(\ell)$ and $\Box^\delta(\ell):=\Box(\ell)$, seen as planar curves, is at most $10\delta$). In the following theorem, denote by $\mathbb{P}_{\textrm{FK}}^{\text{free}} $ the FK-Ising measure on $\Box^\delta(\ell)$ with \emph{free} boundary conditions, whose weights are inherited from those of $\cS^\delta$ via the Edwards-Sokal coupling. A open circuit of edges in $\Box^\delta(\ell)$ is a cluster of open edges in $\Box^\delta(\ell)$ surrounding its inner boundary. One has the following theorem.
\begin{theo}[Chelkak \cite{Che20}]\label{thm:RSW-FK}
In the above context, there exist positive constants $c_0,L_0>0$, depending only on the constants in \Unif\,, such that for any $\ell \geq L_0\cdot\delta $,
		\begin{equation}
			\mathbb{P}_{\mathrm{FK}}^{\mathrm{free}} \bigl[\mathrm{there\ exists\ an\ open\ circuit\ of\ edges\ in\ } \Box^\delta(\ell)\bigr] \geq c_0.
		\end{equation}
\end{theo}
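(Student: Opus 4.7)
The plan is to establish the annular circuit bound via the standard RSW strategy for the critical FK-Ising model, combining self-duality, the FKG inequality, and a seed crossing estimate extracted from the discrete complex analysis framework on $s$-embeddings developed in \cite{Che20}. The argument proceeds in three stages: a mixed-boundary-condition seed estimate supplied by the FK-Ising fermionic observable, an RSW-type bootstrap to uniform crossings of rectangles with free boundary conditions, and the combination of four rectangular crossings into a circuit.

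For the seed estimate I would work in a discrete Dobrushin-type rectangle $R^\delta \subset \cS^\delta$ carrying FK wired boundary conditions on its two short sides and free conditions on the two long sides. Evaluated at the tip of the FK exploration interface, the associated spinor observable $F^\delta$ is a discrete $s$-holomorphic martingale. By the general $s$-embedding theory of \cite{Che20}, normalised versions of $F^\delta$ are precompact and subsequential limits are continuous holomorphic functions identified by their boundary data; reading off the mass carried by the free arc then yields a uniform, $\delta$-independent lower bound on the probability that the interface touches a macroscopic portion of that arc, which is a weak horizontal crossing of $R^\delta$ in the mixed boundary regime.

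Next, this weak seed estimate is upgraded to a uniform lower bound on the probability of free-boundary horizontal crossings of rectangles of any fixed aspect ratio. The bootstrap proceeds as in the critical square-lattice treatment recalled in \cite{duminil-parafermions}, relying only on self-duality of the critical FK-Ising measure, the FKG inequality, comparison of boundary conditions, and gluing of crossings. These ingredients transfer without change to $\cS^\delta$ given the mesh-uniform local control guaranteed by \Unif. Finally, the annulus $\Box^\delta(\ell) = [-3\ell,3\ell]^2 \setminus [-\ell,\ell]^2$ is decomposed into four overlapping rectangles of bounded aspect ratio, each running alongside one side of the inner square; applying the RSW bound to each and combining by FKG yields an open circuit surrounding $[-\ell,\ell]^2$ with probability at least a constant $c_0 > 0$ depending only on the constants in \Unif.

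The delicate step is the seed estimate. In the isoradial setting (cf.\ \cite{ChSmi2, CIM-universality}) one builds full-plane fermionic observables with explicit asymptotics and extracts quantitative crossing bounds directly; no such integrable tools are available on a generic periodic $s$-embedding. One must therefore rely entirely on soft compactness arguments together with identification of subsequential limits through their boundary values, using only the quantitative local control supplied by \Unif. Producing a $\delta$-uniform quantitative lower bound in this purely soft framework — rather than merely qualitative positivity along subsequences — is exactly where the $s$-embedding complex-analytic apparatus of \cite{Che20} is indispensable.
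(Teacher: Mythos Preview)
The paper does not contain a proof of this statement: Theorem~\ref{thm:RSW-FK} is quoted from \cite{Che20} and used throughout as a black-box input. There is therefore no in-paper argument to compare your proposal against.

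That said, your sketch has a genuine gap. The RSW bootstrap you invoke from \cite{duminil-parafermions} relies on self-duality of the critical FK-Ising measure, and you assert that ``these ingredients transfer without change to $\cS^\delta$''. They do not. On a generic doubly-periodic graph the critical FK-Ising model is \emph{not} self-dual in the geometric sense: Kramers--Wannier duality sends the model on $G$ to the model on the dual graph $G^\star$ with dual weights, and $G^\star$ has a different combinatorial structure from $G$. The standard square-lattice bootstrap uses self-duality (and often $\pi/2$-rotation invariance) to turn a single seed crossing into uniform bounds for all aspect ratios; neither symmetry is available here. The present paper flags exactly this obstruction when discussing Theorem~\ref{thm:SuperStrongRSW}, writing that the square-lattice renormalisation ``does not carry over directly to the periodic case \ldots\ due to the lack of self-duality''. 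Chelkak's actual proof in \cite{Che20} circumvents this by working directly with the $s$-holomorphic observable and the comparison principle for $H_F$, rather than by a duality-based RSW bootstrap; your outline does not capture that mechanism.
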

This theorem shows that, as the critical square-lattice, crossing probabilities for FK clusters in large boxes remain bounded away from $0$ and $1$, in sharp contrast with the off-critical phase. It also implies the absence of an infinite cluster on critical doubly-periodic graphs, as originally proven in \cite{DumLis}. 

Finding the correct discrete conformal structure to this critical model even allowed going further and proving conformal invariance of the FK-interfaces. Consider a simply connected domain $(\Omega^\delta,a^\delta,b^\delta) \subset \cS^\delta$ and define the Ising model on $(\Omega^\delta,a^\delta,b^\delta)$ with Dobrushin boundary conditions, \emph{wired} along the arc $(a^\delta b^\delta)^\circ $ and \emph{free} along the arc $(b^\delta a^\delta)^\bullet$. In this setup $a^\delta,b^\delta$ are \emph{corners} of the $s$-embedding $\cS^\delta$, linking $(a^\delta b^\delta)^\circ $ to $(b^\delta a^\delta)^\bullet$. Set $\gamma^\delta $ the discrete interface that separates primal and dual clusters in $(\Omega^\delta,a^\delta,b^\delta)$ and connects $a^\delta$ to $b^\delta$. In that context, conformal invariance of the FK-interfaces reads as follows.
\begin{theo}[Chelkak \cite{Che20}]\label{thm:SLE-periodic}
 If the sequence of domains $(\Omega^\delta,a^\delta,b^\delta)_{\delta>0}$ converges (in the Carathéodory sense) to a simply connected domain $(\Omega,a,b)$ with two marked boundary points $a,b\in \partial \Omega$, then the sequence of interfaces $(\gamma^\delta)_{\delta >0} $ converges in law to the continuous process $\mathrm{SLE}(16/3,a,b)$ in $\Omega$.
\end{theo}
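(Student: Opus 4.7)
The plan is to follow Smirnov's scheme for SLE convergence of discrete interfaces. This requires three ingredients: (i) precompactness of the family $(\gamma^\delta)_{\delta>0}$ in the topology of curves modulo reparametrisation; (ii) construction of a discrete \emph{martingale observable} along the exploration that converges to a conformally covariant holomorphic function on $\Omega$; (iii) identification of the Loewner driving process of any subsequential limit as $\sqrt{16/3}\,B_t$.

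For step (i), I would invoke the Kemppainen--Smirnov criterion, whose hypotheses reduce to a uniform estimate on the probability that the explored curve performs an unforced multi-arm crossing of a thin annulus anywhere inside $\Omega^\delta$. This in turn follows from the uniform open-circuit bound of Theorem \ref{thm:RSW-FK}, combined with standard FKG monotonicity comparisons to pass from free boundary conditions to Dobrushin ones. Consequently the laws of $\gamma^\delta$ are tight and every subsequential limit is almost surely a simple continuous curve from $a$ to $b$ in $\overline{\Omega}$, not touching itself nor the boundary outside the arcs prescribed by Carath\'eodory convergence.

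For step (ii), one builds on $\cS^\delta$ the parafermionic observable $F^\delta$ of spin $\tfrac12$ associated with the Dobrushin configuration, defined on corners of $\cS^\delta$, with a prescribed singularity at the target corner $b^\delta$. Chelkak's discrete complex analysis on $s$-embeddings satisfying \Unif\ then yields that $F^\delta$ is s-holomorphic away from the tip of $\gamma^\delta$, that it satisfies Riemann-type boundary conditions on $\partial \Omega^\delta$, and that, suitably normalised, it is a martingale with respect to the filtration of the growing interface. The general compactness theory for s-holomorphic functions under \Unif\ produces subsequential uniform convergence on compact subsets of $\Omega\setminus\{a,b\}$ to a holomorphic function $f_\Omega$, and uniqueness of the continuous boundary value problem identifies $f_\Omega$ with the classical martingale observable for $\mathrm{SLE}(16/3,a,b)$.

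For step (iii), one transfers the discrete martingale property to the limit: uniform convergence of $F^\delta$ in the bulk together with dominated convergence shows that the image of $f_\Omega$ under the Loewner flow of any subsequential limit $\gamma$ is a continuous martingale for the filtration of $\gamma$, and an It\^o computation then forces the driving process to be a Brownian motion of variance $16/3$. The main obstacle is controlling $F^\delta$ uniformly near the tip of $\gamma^\delta$ and near $\partial \Omega^\delta$, where direct bulk s-holomorphic estimates degenerate. In the square-lattice and isoradial cases this is achieved via explicit full-plane fermionic observables, which are not available in the doubly-periodic setting. In their place one must combine a priori regularity estimates for s-holomorphic functions on $\cS^\delta$ with boundary-modification arguments powered by the strong box-crossing estimates of Theorem \ref{thm:RSW-FK}, in order to localise the observable away from the singular regions before passing to the limit in the martingale relation.
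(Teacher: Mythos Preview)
This theorem is not proved in the present paper: it is quoted as a result of Chelkak \cite{Che20}, and the paper uses it as an input. There is therefore no ``paper's own proof'' to compare against. Your sketch is a reasonable outline of the strategy that \cite{Che20} actually implements (precompactness via RSW and the Kemppainen--Smirnov criterion, convergence of the FK martingale observable via the $s$-holomorphic regularity theory under \Unif, and identification of the driver), so at the level of architecture you are on the right track.

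There is, however, a genuine error in step (i): you assert that every subsequential limit is ``almost surely a simple continuous curve \dots not touching itself nor the boundary''. This is false for $\kappa=16/3>4$: $\mathrm{SLE}(16/3)$ is not simple, and the FK interface in the limit does touch itself and the boundary. What the Kemppainen--Smirnov criterion gives you is tightness in the space of curves modulo reparametrisation and the fact that subsequential limits are Loewner chains (no six-arm type events, no instantaneous crossings of annuli), not simplicity. You should correct this to saying that subsequential limits are continuous curves generating a Loewner chain.

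A second point: in step (iii) you say the main obstacle is controlling $F^\delta$ near the tip and near $\partial\Omega^\delta$, and that in the isoradial case this is done via explicit full-plane observables. For the \emph{FK} observable specifically, the boundary analysis in \cite{Che20} does not rely on full-plane constructions; the novelty there is a different method (based on the comparison principle for $H_F$ and $S$-graph harmonicity) that handles the boundary behaviour directly on $s$-embeddings. Your description conflates this with the harder situation for spin/energy observables, which is precisely the gap the present paper addresses.
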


\subsection{Main results}
We establish conformal invariance of Ising spin correlations on all critical doubly-periodic graphs. There are generally two kinds of global observables that come into play when proving conformal invariance statements. One can first study interfaces between Ising or FK-Ising clusters, aiming to show their convergence to SLE or CLE processes. In a second step, one seeks to establish the convergence of correlation functions, either for spins in generic positions inside a domain or for the energy density variables encoding products of nearby spins. While these results are by now well understood for critical and near-critical isoradial grids, their generalisation to doubly-periodic grids were still missing.

The energy random variable \emph{can} be expressed in terms of fermions, allowing one to use a clever integration trick from \cite{MahPar25a} to recover conformal covariance of the second term in the expansion of the energy density. Hence, the only remaining challenge concerns the spin correlations, which, even on the square lattice, require a much more delicate analysis, as they can be expressed through \emph{ratios} of fermions rather than their products. For a generic periodic lattice, we say that two faces $f^\delta \sim f'^\delta$ are of the \emph{same type} if they have the same image when projected onto the fundamental domain $\Gamma^\delta$ of $\cS^\delta$. There are finitely many such types, denoted $\cT := \{ \frak{f}_1, \dots, \frak{f}_m \}$. Fix a simply connected domain $\Omega$ with smooth boundary, approximated (in the Hausdorff sense) by $\Omega^\delta \subset \cS^\delta$, and faces of same type $a^\delta,b^\delta,c^\delta,d^\delta \in \Omega^\delta$ approximating respectively distinct points $a,b,c,d$ in the interior of $\Omega$. We settle the question of Conformal invariance of spin correlations in critical doubly-periodic grids in the following theorem.
\begin{theo}\label{thm:Correlation-function}
	 In the previous framework, for the Ising model on $\Omega^\delta$ with \emph{wired} boundary conditions, one has
	\begin{equation}
		\frac{\mathbb{E}^{\mathrm{(w)}}_{\Omega^\delta}[\sigma_{a^\delta}\sigma_{b^\delta}]}{\mathbb{E}^{\mathrm{(w)}}_{\Omega^\delta}[\sigma_{c^\delta}\sigma_{d^\delta}]} \underset{\delta \to 0}{\longrightarrow} \frac{\langle \sigma_a \sigma_b \rangle_{\Omega}^{(\mathrm{w})}}{\langle \sigma_c \sigma_d \rangle_{\Omega}^{(\mathrm{w})}},
	\end{equation}
where the correlation function $\langle \cdots  \rangle_{\Omega}^{(\mathrm{w})}$ is recalled in Section~\ref{sub:correlation} and coincides with that of the square lattice. The convergence is uniform on compacts of $\Omega$ and with respect to the distance separating the points $a,b,c,d$. Moreover, there exist some lattice-dependent constants $C_{(\frak{f}_{a^\delta})},C_{(\frak{f}_{b^\delta})}$, depending only on the types of $a^\delta$ and $b^\delta$, and a global scaling factor $\rho(\delta)=\delta^{-\frac{1}{8}+o(1)} $ such that
\begin{equation}
	\rho(\delta)^{-2}\cdot\mathbb{E}^{\mathrm{(w)}}_{\Omega^\delta}[\sigma_{a^\delta}\sigma_{b^\delta}] \underset{\delta \to 0}{\longrightarrow}  C_{(\frak{f}_{a^\delta})}C_{(\frak{f}_{b^\delta})} \cdot \langle \sigma_a \sigma_b \rangle_{\Omega}^{(\mathrm{w})}.
\end{equation}
\end{theo}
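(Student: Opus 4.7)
The plan is to encode the Ising two-point function through a discrete fermionic (Kadanoff--Ceva) observable and to combine the s-holomorphicity machinery of \cite{Che20} with the random-cluster crossing estimates of Theorem~\ref{thm:RSW-FK}. Concretely, I would attach to the pair $(a^\delta,b^\delta)$ a discrete spinor $F^\delta_{\Omega^\delta}(\,\cdot\,;a^\delta,b^\delta)$ defined on a double cover of $\cS^\delta$ branching over $\{a^\delta,b^\delta\}$, s-holomorphic away from the two sources, satisfying the Riemann-type boundary condition induced by the wired boundary conditions on $\partial\Omega^\delta$, and carrying prescribed (square-root-like) singularities at $a^\delta,b^\delta$. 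The correlation $\mathbb{E}^{(\mathrm{w})}_{\Omega^\delta}[\sigma_{a^\delta}\sigma_{b^\delta}]$ is then read off from the values of $F^\delta$ at the branch points, up to an explicit combinatorial prefactor that is local to the lattice.

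The first step is to prove that, once suitably renormalized, $F^\delta$ converges on compacts of $\Omega\setminus\{a,b\}$ to the continuous holomorphic spinor $f_\Omega(\,\cdot\,;a,b)$ solving the same Riemann boundary value problem as in the square-lattice analysis of \cite{CHI}. Uniform Hölder regularity of $F^\delta$ away from the sources should follow by combining Theorem~\ref{thm:RSW-FK} (which yields a priori two-sided bounds on boundary-type observables on mesoscopic annuli) with the discrete harmonic analysis of s-holomorphic functions developed in \cite{Che20}; uniqueness of any subsequential continuous limit follows from a standard boundary value problem argument once the singular normalizations at $a$ and $b$ are fixed. Since the limiting problem depends only on the conformal type of $(\Omega;a,b)$, the limit coincides with that of the square lattice.

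The delicate part of the argument is the near-source analysis needed to extract the type-dependent constants $C_{(\frak{f})}$ and the global scaling $\rho(\delta)=\delta^{-1/8+o(1)}$. In sharp contrast with the isoradial setting of \cite{CIM-universality}, no explicit full-plane fermion is available on a periodic s-embedding. I would proceed by matching asymptotics at two scales: on a mesoscopic scale $\delta\ll r\ll 1$, $F^\delta$ is close to the universal continuous singular profile by the previous step, whereas on the microscopic scale $r\asymp\delta$ it is asymptotic to the unique (up to a scalar depending on the type $\frak{f}_{a^\delta}$) discrete s-holomorphic spinor with a branch cut at $a^\delta$ and controlled growth at infinity on the full periodic lattice $\cS$. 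The ratio between these two asymptotic behaviours defines $C_{(\frak{f}_{a^\delta})}\rho(\delta)$. The value of the exponent can be pinned down either via the universal SLE$(16/3)$ limit of Theorem~\ref{thm:SLE-periodic} or by applying the integration trick of \cite{MahPar25a} along a path of weights interpolating from the given doubly-periodic model to the square lattice, with uniform RSW estimates along the path provided by Theorem~\ref{thm:RSW-FK}.

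The ratio convergence in Theorem~\ref{thm:Correlation-function} then follows at once: since $a^\delta,b^\delta,c^\delta,d^\delta$ share the same type, the factors $C_{(\frak{f})}\rho(\delta)^2$ cancel between numerator and denominator, leaving the universal continuous ratio. The unnormalized convergence is obtained by combining the fermion convergence with the normalizations extracted in the previous step. The main obstacle I foresee is precisely this near-source analysis: the lack of translation invariance on the universal cover, together with the absence of explicit periodic full-plane fermions, forces $C_{(\frak{f})}$ to be defined through a compactness and uniqueness procedure on growing finite neighbourhoods rather than in closed form, and quantifying the $o(1)$ correction in $\rho(\delta)$ uniformly in the geometric data will require a careful interplay between random-cluster couplings and discrete complex analysis estimates.
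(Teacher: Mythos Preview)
Your proposal follows the strategy of \cite{CHI,CIM-universality}: build a branching s-holomorphic spinor attached to the pair $(a^\delta,b^\delta)$, prove its convergence to the continuous spinor $f^\Omega_{[a,b]}$, and extract the correlation from the near-source behaviour. This is precisely the route the paper \emph{abandons} for doubly-periodic graphs. As discussed in Section~\ref{sec:introduction}, extracting the correlation from the spinor goes through the logarithmic derivative
\[
\log\frac{\mathbb{E}_{\Omega^\delta}[\sigma_{a^{\delta}}\sigma_{b^\delta}]}{\mathbb{E}_{\Omega^\delta}[\sigma_{a'^\delta}\sigma_{b^\delta}]}=\mathcal{A}_{(a,b)}\cdot \delta + o(\delta),
\]
and identifying $\mathcal{A}_{(a,b)}$ requires \emph{two-term} asymptotics of a full-plane branching spinor near the source. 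On isoradial graphs this is obtained via explicit discrete exponentials; on a generic periodic $s$-embedding no such representation is known, and the paper states plainly that this step ``appears intractable''. Your proposed workaround --- defining the microscopic profile by compactness and uniqueness on growing windows --- does not give the quantitative $o(\delta)$ control needed to integrate the logarithmic derivative, nor does it produce the second-order coefficient that carries $\mathcal{A}_{(a,b)}$. You correctly flag this as the main obstacle; the point is that it is not a technicality but the actual barrier.

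The paper's proof is conceptually different and avoids the spinor observable for spin correlations altogether. Via Edwards--Sokal, the spin two-point function becomes an FK connection probability $\phi^1_{\Omega^\delta}[a^\delta\leftrightarrow b^\delta]$. Lemma~\ref{lem:IIC-writing} then factorises this, up to a multiplicative error $1+O(\varepsilon^c)$, as an Incipient Infinite Cluster term $\Phi^{\mathcal{C}_\varepsilon(a^\delta)}_{\mathrm{IIC}}[a^\delta\leftrightarrow\infty]$ near each point times a macroscopic circle-to-circle connection $\phi^1_{\Omega^\delta}[\mathcal{C}_\varepsilon(a^\delta)\leftrightarrow\mathcal{C}_\varepsilon(b^\delta)]$. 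When all four faces share a type, the IIC factors cancel in the ratio; the remaining macroscopic ratio converges to a CLE$(16/3)$ quantity by Theorem~\ref{thm:CLE-convergence} (whose input is Theorem~\ref{thm:SuperStrongRSW}, proved via the new half-plane one-arm argument of Section~3). The key trick is then to run the \emph{same} identity on the square lattice, where the left-hand side is already known by \cite{CHI}, thereby identifying the CLE ratio with $\langle\sigma_a\sigma_b\rangle^{(\mathrm w)}_\Omega/\langle\sigma_c\sigma_d\rangle^{(\mathrm w)}_\Omega$ without any CLE computation. The normalised statement and the constants $C_{(\frak f)}$ come from the non-cancelling IIC factors when types differ, together with the fusion argument of \cite[Section~2.8]{CHI}; the exponent in $\rho(\delta)$ is read off from the one-arm exponent via SLE convergence. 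None of this uses a spin spinor or its near-source expansion.
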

For clarity, we restrict the theorem to the case of two spins, though our approach extends directly to any number $n>2$ of spins. The continuous correlation function is \emph{conformally covariant}, with conformal weight $\frac{1}{8}$. The scaling factors $\rho(\delta)^{-2}$ were obtained by explicit computations on the square lattice \cite{McWu-Ising} (see also \cite{CHM-zig-zag-Ising} for a modern derivation) and later extended to isoradial lattices by a gluing argument. However, no derivation of these scaling factors appears to exist in the general setting of critical doubly-periodic graphs. The SLE/CLE-based approaches only yield the asymptotic order $r(\delta)=\delta^{-1/8+o(1)}$, possibly with logarithmic corrections \cite{wu2018polychromatic,garban2020convergence}. For percolation, \cite{du2024sharp} obtained the result without logarithmic correction, relying on quantitative convergence of interfaces established in \cite{binder2024power}. Finding a closed formula for $\rho(\delta)$ amounts to determining the sharp asymptotics of the full-plane two-point function on periodic lattices. Existing methods on regular lattices, based on determinant computations and orthogonal polynomials \cite{CHM-zig-zag-Ising}, does not seem straightforward.

As it will appear in the proof, we compensate for the lack of integrability of full-plane discrete fermions by relying on the convergence of FK-Ising clusters to the CLE$(16/3)$ loop soup. This statement was established in \cite{KemSmi2}\footnote{This article proved the convergence in the natural topology for loop collections (see, e.g., \cite[Section 2.7]{BenHon19}). We use here this convergence to establish discrete/continuum analogs of connection probabilities for \emph{very simple connection events}. One may also wonder about convergence in the Schramm–Smirnov topology \cite{schramm2011scaling}, which is more suitable for studying connectivity properties. This convergence is widely believed to hold, although precise references may still currently be lacking.}. One can repeat the proof of the square lattice case, as it amounts, among other things, to combine the convergence of the martingale observable proved in \cite{Che20} with the \emph{Super Strong Box Crossing Property} that we prove in Theorem \ref{thm:SuperStrongRSW}. 

Consider a discrete topological rectangle $\mathcal{D}:=(a^\delta b^\delta c^\delta d^\delta)\subset\cS^\delta$, and denote by $\ell_{\mathcal{D}}[(a^\delta b^\delta),(c^\delta d^\delta)]$ its \emph{extremal length} as a discrete domain in the plane $\mathbb{C}$. There exists a unique $\ell=\ell_{\mathcal{D}}[(a^\delta b^\delta),(c^\delta d^\delta)]$ such that $\mathcal{D}$ is conformally mapped to $[0,1]\times[0,\ell]$, with $a^\delta,b^\delta,c^\delta,d^\delta$ mapped respectively to the corners of this rectangle in counter-clockwise order, starting from the lower-left corner. The next theorem states that crossing probabilities with free boundary conditions can be uniformly bounded in terms of the extremal length of the discrete domain.
\begin{theo}\label{thm:SuperStrongRSW}
	There exist constants $\eta,M>0$, depending only on the constants in \Unif\,, such that:
	\begin{itemize}
		\item If $\ell_{\mathcal{D}}[(ab),(cd)]\leq M$ then $ \mathbb{P}_{\mathcal{D}}^{\mathrm{free}}[(ab)\leftrightarrow(cd)]\geq \eta$.
		\item If $\ell_{\mathcal{D}}[(ab),(cd)]\geq M$ then $ \mathbb{P}_{\mathcal{D}}^{\mathrm{free}}[(ab)\leftrightarrow(cd)]\leq 1- \eta$.
	\end{itemize}
\end{theo}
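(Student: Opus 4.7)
My plan is to upgrade the circuit-in-annulus estimate of Theorem \ref{thm:RSW-FK} into a full RSW-type rectangle-crossing theorem, and then combine it with a discrete extremal-length covering argument. First, iterating the annulus circuit estimate on nested annuli and applying the standard FKG/planar-duality bootstrap---using only self-duality of the critical FK-Ising model (which holds on $\cS^\delta$) and the FKG inequality---yields, for every fixed aspect ratio $\rho > 0$, a uniform constant $\eta(\rho) > 0$ such that any combinatorial rectangle of aspect ratio $\rho$ in $\cS^\delta$ has crossing probability in $[\eta(\rho), 1 - \eta(\rho)]$ under either free or wired boundary conditions. These arguments are soft and transfer verbatim from the square-lattice setting.

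For the first bullet of Theorem \ref{thm:SuperStrongRSW}, given $\mathcal{D} = (abcd)$ with $\ell_{\mathcal{D}}[(ab),(cd)] \leq M$, I would decompose $\mathcal{D}$ into a chain of $N = N(M)$ overlapping topological quadrilaterals, each of uniformly bounded modulus and each joining a sub-arc of $(ab)$ to a sub-arc of $(cd)$ through a thick portion of $\mathcal{D}$. Such a chain can be constructed by pulling back a partition of the uniformising rectangle $[0,1] \times [0,\ell_{\mathcal{D}}]$ through the discrete conformal map of $\mathcal{D}$ (whose distortion is controlled by Assumption \Unif\ and by the discrete complex-analysis tools of \cite{Che20}), or directly from level lines of the discrete harmonic measure in $\mathcal{D}$ with mixed Dirichlet/Neumann boundary conditions on $\partial \mathcal{D}$. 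Applying the previous step to each sub-quadrilateral and combining the resulting crossings by FKG yields $\mathbb{P}^{\mathrm{free}}_{\mathcal{D}}[(ab) \leftrightarrow (cd)] \geq \eta(M)$.

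The second bullet then reduces to the first by planar duality: the complement of a primal free-BC crossing $(ab) \leftrightarrow (cd)$ is a wired-BC dual crossing $(bc) \leftrightarrow^* (da)$ in the dual rectangle $\mathcal{D}^*$, whose extremal length equals $1/\ell_{\mathcal{D}}[(ab),(cd)] \leq 1/M$; since wired BC only increases crossing probabilities relative to free BC, applying the first bullet in the dual yields the required upper bound. The main technical hurdle is the shape-uniformity in the second step: on a regular lattice one can use axis-aligned covers, but on a generic $s$-embedding one must instead rely on the uniform geometry of Assumption \Unif\ together with the discrete Harnack/distortion estimates of \cite{Che20} to ensure that discrete extremal length on $\cS^\delta$ is comparable to its continuous counterpart and that pullbacks of bounded-modulus decompositions remain bounded-modulus in the combinatorial sense.
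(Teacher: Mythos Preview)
Your proposal has two genuine gaps, and both are precisely the obstacles the paper is written to overcome.

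First, the claim that ``self-duality of the critical FK-Ising model holds on $\cS^\delta$'' is false for a generic doubly-periodic graph. Kramers--Wannier duality \eqref{eq:Kramers-Wannier} sends the primal model on $G^\circ$ to a model on $G^\bullet$ with \emph{different} weights $(x_e)^\star$; only on the homogeneous square lattice do these coincide. The paper states this explicitly in the introduction (``due to the lack of self-duality, that we bypass here using new discrete analytic tools'') and again in Section~3 (``circumventing the need for $\pi/2$-rotation invariance and self-duality''). Your duality reduction of the second bullet to the first, and the ``FKG/planar-duality bootstrap'' in your first step, both rely on this and therefore do not go through as stated.

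Second, the extremal-length covering step is the actual hard part, and your sketch assumes tools that are not available on $s$-embeddings. You propose to pull back a bounded-modulus decomposition through a ``discrete conformal map'' with distortion controlled by \cite{Che20}, or to use level lines of discrete harmonic measure. But the paper notes that the required boundary Harnack/harmonic-measure estimates of \cite{Chetoolbox,CDH} are specific to isoradial grids: on periodic $s$-embeddings the primitive $H_F$ is \emph{not} sub/super-harmonic for any simple local Laplacian, so the comparison between discrete and continuous extremal length you invoke is exactly what is missing. The paper's route is entirely different: it first computes the half-plane one-arm exponent to be exactly $1/2$ via a comparison-principle argument for $H$-functions (Proposition~\ref{prop:one-arm-half-plane}), then feeds this exponent into the renormalisation scheme of \cite{DMT21} (Proposition~\ref{prop:1.8-DMT} and Lemma~\ref{lem:lower-bound-crossing}), using a kite-extension of the boundary to control the argument of the FK observable along straight arcs. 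The exact value $1/2$ is what replaces self-duality in that scheme.
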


This statement is \emph{not} known beyond the isoradial setting, where it was established in \cite{CDH} using sharp random-walk estimates on isoradial grids developed in \cite{Chetoolbox}. An alternative proof based on a renormalisation argument on the square lattice is presented in \cite{DMT21}, but this approach does not carry over directly to the periodic case. In particular, a crucial step in that argument relies on the use of primal and dual one-arm exponents in the half-plane, which cannot be extended beyond the square lattice without additional work, due to the lack of self-duality, that we bypass here using new discrete analytic tools.

\subsection{Novelties of the paper and adopted strategy}

Beyond the RSW crossing estimates of Theorem~\ref{thm:RSW-FK}, establishing the convergence of FK interfaces via Smirnov's original approach~\cite{Smirnov-conformal} requires addressing a challenging discrete to continuous boundary analysis problem for the scaling limit of Ising fermions. This problem was resolved for periodic grids by Chelkak~\cite{Che20} using a novel method that fundamentally differs from Smirnov's original proof. The study of correlation functions in bounded domains is substantially more involved. Although the boundary analysis of fermions associated with energy density and spin correlations resembles that of the FK observable, these fermions possess \emph{discrete singularities} in the bulk. On isoradial lattices, one way to analyse them goes by constructing full-plane fermions with prescribed discrete singularities (see \cite{HonglerSmirnov,HonPHD,CHI,CHI-mixed,CIM-universality,park2018massive}). These constructions can be reformulated in terms of \emph{discrete exponentials}, a one-parameter family (indexed by $\lambda\in\mathbb{C}$) of $s$-holomorphic functions that, when $\lambda$-integrated along suitable contours, yield explicit analytic expressions for full-plane fermions. However, no simple geometric formula for these discrete exponentials is known on periodic lattices corresponding to $s$-embeddings, causing the isoradial proof strategy to break down.

Recently, \cite{MahPar25a} showed that the convergence of the energy density follows from a simple integration trick yielding explicit local scaling factors, characterized by a non-trivial residue of a full-plane Green function. These factors are now understood even on irregular grids and for non-trivial limiting conformal structures. In contrast, the convergence of spin correlations, developed in \cite{CHI,CHI-mixed,CIM-universality}, relies on a substantially heavier construction. The central step consists in controlling the \emph{logarithmic derivative} of Ising correlation functions, that is, expanding
\[
\log\frac{\mathbb{E}_{\Omega^\delta}[\sigma_{a^{\delta}}\sigma_{b^\delta}]}{\mathbb{E}_{\Omega^\delta}[\sigma_{a'^\delta}\sigma_{b^\delta}]}=\mathcal{A}_{(a,b)}\cdot \delta + o(\delta),
\]
where the spins $a^\delta$ and $a'^\delta$ are one step apart, while $b^\delta$ remains fixed. The coefficient $\mathcal{A}_{(a,b)}$ is related to a Riemann--Hilbert boundary value problem in complex analysis, recalled in Section~\ref{sub:correlation}. Intuitively, one studies how the ratio of correlation functions changes when the spin under consideration is shifted by one step, and relates this discrete variation to a continuous function. Once this one-step change is understood, the result can be integrated along arbitrary discrete paths to establish convergence of correlation ratios. In \cite{CHI}, this step required a delicate analysis relying on symmetry arguments and explicit constructions, later simplified in \cite{CIM-universality} through another integration trick. However, that approach requires precise \emph{two-term asymptotics} of full-plane discrete fermions, which appears intractable on periodic lattices without an explicit discrete exponential representation.

In this paper, we take a different approach. We begin with new and fairly simple observations in discrete complex analysis on the boundary behavior of FK observables near flat domains, from which we deduce that the one arm exponent in the half plane is \emph{exactly} $1/2$. This replaces the boundary Harnack principle of \cite[Section~5]{ChSmi2}, which holds on isoradial lattices and relies on sub and superharmonicity arguments for primitives of squared fermions together with sharp random walk estimates near straight boundary arcs, tools that do not apply in the periodic setting. We then apply the bootstrap method of \cite{DMT21} to prove Theorem~\ref{thm:SuperStrongRSW}, supplemented by surgery techniques on $s$-embeddings in the spirit of \cite{Mah23,MahPHD}, which attach layers of boundary kites to a piece of a periodic lattice. This construction provides effective control of the boundary argument of fermionic observables, in close analogy with the square lattice case. As a byproduct, we obtain a new way to control boundary conditions for discrete fermionic observables in smooth domains, applicable to periodic graphs and extendable beyond this setting. In particular, it can be used to recover convergence of FK observables in smooth domains approximated in the Hausdorff sense in $\cS^\delta$ (see for instance \cite[Chapter~6]{MahPHD}).

All together, this provides the missing ingredients to deduce the CLE convergence of \cite{KemSmi2}. At this point, we take a route that hasn't been used before in computing correlations for the Ising model. Using I.I.C.\ arguments, we express connections probabilities in FK cluster via connection probabilities between small but macroscopic loops, which converge to CLE connections probabilities in the limit $\delta \to 0 $. Note that those crossing probabilities between small annuli are a priori hard to compute. Instead of diving into a difficult analysis, we make the \emph{same} reasoning on the square lattice, where convergence of correlation functions is known by \cite{CHI}, which allows to conclude without making \emph{any} CLE computation. Finally, the passage to the full-plane renormalisation follows the approach of the original version of \cite{CHI}, where scaling factors were expressed through full-plane two-point correlations, without assuming the explicit closed form $\delta^{-1/8}$ per spin derived from the McCoy-Wu computation on the square lattice \cite{McWu-Ising}.
\medskip
\section*{Acknowledgements}
I would like to thank Hugo Duminil-Copin for encouraging me to write down this research and for suggesting to bypass CLE computations using known results on the square lattice, as well as Emile Avérous and Tiancheng He for many enlightening explanations on the Incipient Infinite Cluster. I would like to thank Christophe Garban for his encouragements and Paul Cahen and Stanislav Smirnov for useful references on CLE convergence. Early stages of this research were initiated while the author was a PhD student at ENS Paris. I would like to thank the institution for its support, as well as Dmitry Chelkak for discussions on the discrete complex analysis. This research is funded by Swiss National Science Foundation and the NCCR SwissMAP.

\section{Notations and crash intro into the s-embedding formalism}\label{sec:notations}
\setcounter{equation}{0}

We briefly recall the general framework of $s$-embeddings as introduced in \cite[Section 3]{Che20}, together with the associated regularity theory for the so-called $s$-holomorphic functions, both of which arise from a complexification of the classical Kadanoff-Ceva formalism. Our notation is taken directly from \cite{Che20, Mah23, MahPar25a} and is consistent with \cite[Section~3]{CCK}, \cite{Ch-ICM18}, and \cite{CLR1,CLR2}. Since we do not include proofs here, we refer the reader to \cite[Section 2]{Che20} for more details. Chelkak’s main idea was to define a class of embeddings associated with a given weighted abstract graph, where the weights have a natural geometric interpretation, allowing the use of discrete complex analysis methods.

\begin{figure}\begin{minipage}{0.49\textwidth}
\includegraphics[clip, width=0.8\textwidth]{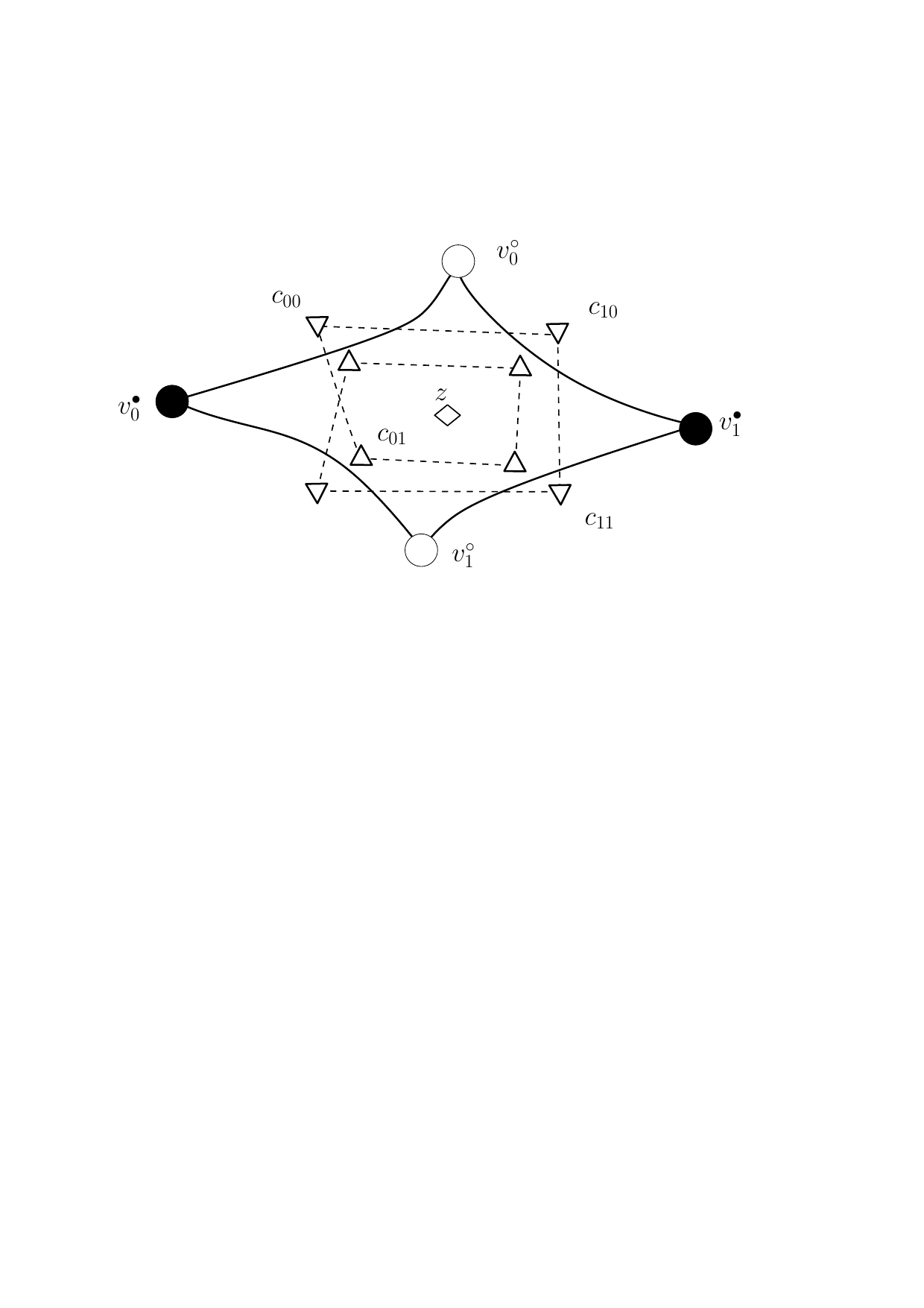}
\end{minipage}\begin{minipage}{0.49\textwidth}
\includegraphics[clip, width=0.7\textwidth]{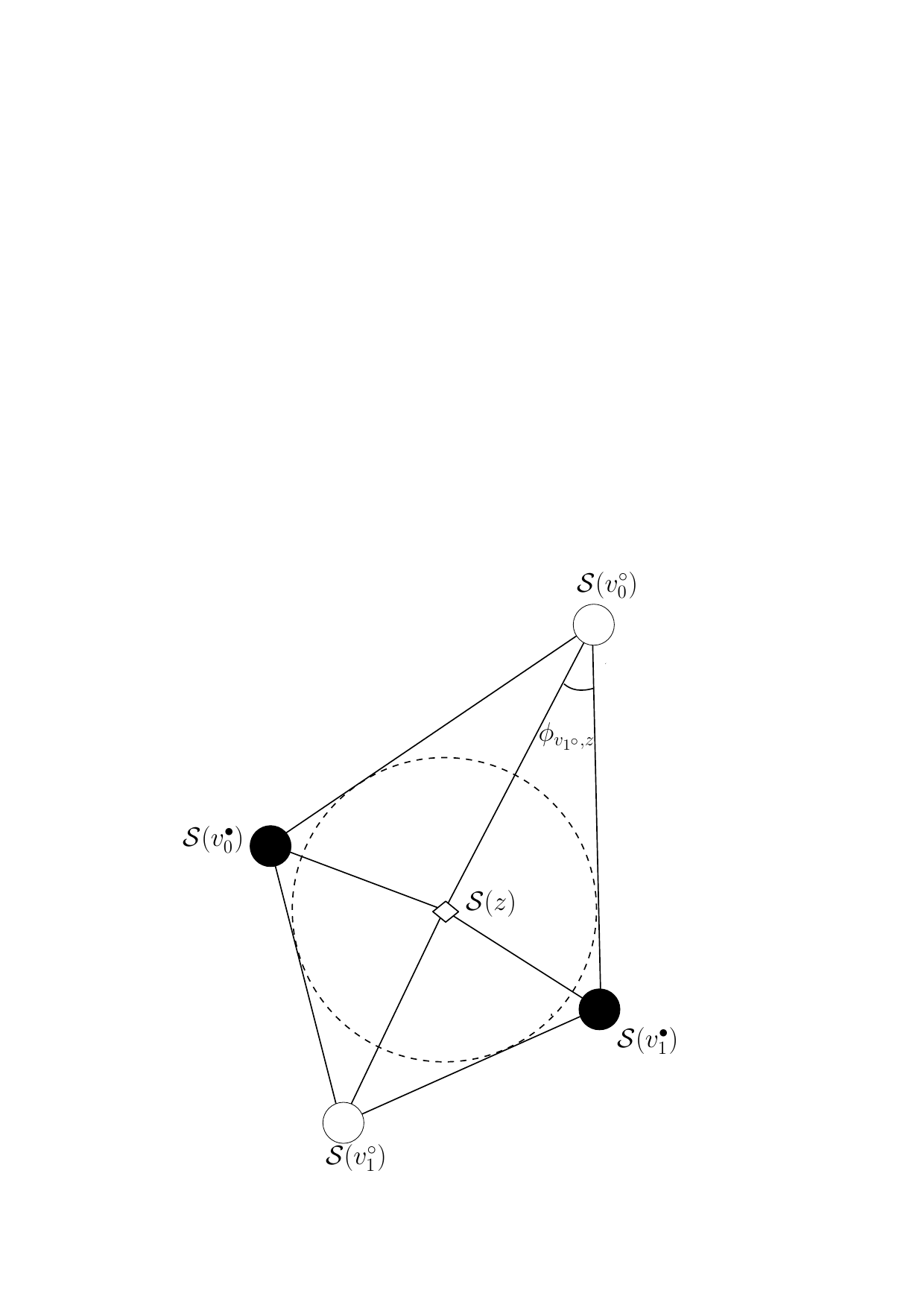}
\end{minipage}
\caption{(Left) Notation for a quad $z \in \diamondsuit(G)$ with an arbitrary planar embedding. Vertices of the primal graph $G^\bullet$ are indicated as black dots, and vertices of the dual graph $G^\circ$, corresponding to faces of $G$, are shown as white dots. The so-called corners, corresponding to edges of the bipartite graph $\Lambda(G) = G^\bullet \cup G^\circ$, are represented as triangles. This illustration shows a part of the \emph{double cover} of the corner graph branching around $z$. Corners that are neighbours \emph{within this double cover} are connected by dashed lines. (Right) A section of the associated $s$-embedding containing the quad $\cS^{\diamondsuit}(z)$, tangent to a circle of radius $r_z$ centreed at $\cS(z)$. The Ising weight of the edge between $v_0^\bullet$ and $v_1^\bullet$ can be determined from the four angles $\phi_{v,z}$ associated with $\cS^{\diamondsuit}(z)$, using the formula in \eqref{eq:theta-from-S}.}
\label{fig:graph-notations}
\end{figure}

\subsection{Notation and Kadanoff-Ceva formalism}\label{sub:notation}

Let us fix a planar graph $G$, which may include multiple edges and vertices of degree 2, but excludes loops and vertices of degree 1, and whose combinatorial type corresponds either to the plane or to the sphere. We consider $G$ up to homeomorphisms that preserve the cyclic order of edges around each vertex. In the spherical setting, one face of $G$ is designated as the outer face. We denote the original graph by $G=G^\bullet$, with vertices labeled $v^\bullet\in G^\bullet$, and its dual by $G^\circ$, with vertices $v^\circ\in G^\circ$ corresponding to the faces of $G$. The faces of the graph $\Lambda(G):= G^\circ \cup G^\bullet$, which naturally forms a bipartite graph, are in one-to-one correspondence with the edges of $G$. Furthermore, we define $\Dm(G)$ as the dual graph of $\Lambda(G)$, whose vertices $z \in \Dm(G)$ correspond to the faces of $\Lambda(G)$. This graph is usually referred to as the quad graph. Finally, the medial graph $\Upsilon(G)$ of $\Lambda(G)$ is introduced, with vertices called \emph{corners} of $G$, each corresponding to an edge $(v^\bullet v^\circ)$ of $\Lambda(G)$.

To maintain full consistency with the Kadanoff-Ceva formalism, it is typically necessary to consider various double covers of $\Upsilon(G)$. For examples of such double covers, see \cite[Fig.~27]{Mercat-CMP} or \cite[Fig 3.A]{Che20}. In this paper, we denote by $\Upsilon^\times(G)$ the double cover that branches over \emph{all} faces of $\Upsilon(G)$, i.e., around every element $v^\bullet \in G^\bullet$, $v^\circ \in G^\circ$, and $z \in \Dm(G)$. When $G$ is finite, this definition is well-posed, since the quantity $\#(G^\bullet) + \#(G^\circ) + \#(\Dm(G))$ is always even. 
Given a set $\varpi = \{\vbullet{m}, \vcirc{n}\} \subset \Lambda(G)$, with both $m$ and $n$ even, we define $\Upsilon^\times_\varpi(G)$ as the double cover of $\Upsilon(G)$ that branches over all faces \emph{except} those in $\varpi$. 
Similarly, $\Upsilon_\varpi(G)$ denotes the double cover of $\Upsilon(G)$ branching \emph{only} over the faces in $\varpi$. A function defined on any of these double covers is called a \emph{spinor} if its values on the two distinct lifts of the same corner differ only by a sign, i.e.\, by a factor of $-1$.

In this work, we consider the Ising model defined on the faces of the graph $G$, including the outer face in the disc case, which corresponds to adopting \emph{wired} boundary conditions. In this statistical mechanics model, spins $\pm1$ are assigned randomly to the vertices of $G^\circ$, with probabilities determined by the partition function \eqref{eq:intro-Zcirc}. The associated low-temperature expansion \cite[Section 1.2]{CCK} provides a mapping from a spin configuration $\sigma : G^\circ \to \{\pm 1\}$ to a subset $C$ of edges in $G$ separating spins of opposite signs. This mapping is actually $2$-to-$1$, depending on the choice of spin at the outer face.

Let us fix an even number $n$ of vertices $\vcirc{n} \subset G^\circ$, and consider a subgraph $\gamma^\circ = \gamma_{[\vcirc{n}]} \subset G^\circ$ that has odd degree precisely at the vertices in $\vcirc{n}$ and even degree at all other vertices of $G^\circ$. Such a subgraph can be interpreted as a collection of paths on $G^\circ$ pairing up the vertices in $\vcirc{n}$. Define
\[
x_{[\vcirc{n}]}(e)\ :=\ (-1)^{e \cdot \gamma_{[\vcirc{n}]}}\, x(e), \quad e \in E(G),
\]
where $e \cdot \gamma = 0$ if the edge $e$ does not cross $\gamma$, and $e \cdot \gamma = 1$ otherwise. Then the spin correlation can be expressed as
\begin{equation}
\label{eq:Esigma}
\textstyle \mathbb E\big[\svcirc{n}\big]\ =\ {x_{[\vcirc{n}]}(\cE(G))}\big/{x(\cE(G))},
\end{equation}
where $x(C):=\prod_{e\in C}x(e)$, $x(\cE(G)):=\sum_{c\in\cE(G)}x(C)$, and similarly for product of the kind $x_{[\vcirc{n}]}$.

Similarly, if $m$ is even and $\vbullet{m} \subset G^\bullet$, one can select a subgraph $\gamma^\bullet = \gamma^{[\vbullet{m}]} \subset G^\bullet$ with even degree at all vertices except those in $\vbullet{m}$. Following the Kadanoff-Ceva formalism \cite{kadanoff-ceva}, one can flip the signs of the interaction constants along $\gamma^\bullet$, $J_e \mapsto -J_e$, which is equivalent to replacing $x(e)$ by $x(e)^{-1}$ for edges in $\gamma^\bullet$, creating an anti-ferromagnetic region along $\gamma^\bullet$. This defines the random variable
\[
\textstyle \muvbullet{m}\ :=\ \exp\big[-2\beta\sum_{e\in\gamma^{[\vbullet{m}]}}J_e\sigma_{v^\circ_-(e)}\sigma_{v^\circ_+(e)}\,\big]\,.
\]
The domain-wall representation then implies (see, e.g., \cite[Proposition 1.3]{CCK})
\begin{equation}
\label{eq:Emu}
\textstyle \mathbb E\big[\muvbullet{m}\big]\ =\ \frac{x(\cE^{[\vbullet{m}]}(G))}{x(\cE(G))},
\end{equation}
where $\cE^{[\vbullet{m}]}(G)$ denotes the set of subgraphs with even degree at all vertices except those in $\vbullet{m}$, which have odd degree. Averaging in \eqref{eq:Emu} removes the dependence on the choice of $\gamma^\bullet$.  

Crucially, one can combine \eqref{eq:Esigma} and \eqref{eq:Emu} to handle configurations where both spins and disorder variables are present simultaneously. In this case, one obtains (see, e.g., \cite[Proposition 3.3]{CCK})
\begin{equation}
\label{eq:Emusigma}
\textstyle \mathbb E\big[\muvbullet{m}\svcirc{n}\big]\ =\ x_{[\vcirc{n}]}(\cE^{[\vbullet{m}]}(G))\big/{x(\cE(G))},
\end{equation}
Here, the variable \( \muvbullet{m} \) is defined as before. A subtlety arises for these mixed correlations: the sign of the expression in \eqref{eq:Emusigma} now depends on whether the number of intersections between the paths \( \gamma^\circ \) and \( \gamma^\bullet \) is even or odd. There is no canonical way to resolve this sign issue while remaining within the Cartesian product structure \( (G^\bullet)^{\times m} \times (G^\circ)^{\times n} \). To handle this, one may fix an embedding \( \cS : \Lambda(G) \to \mathbb{C} \) of $G$ and consider the natural double cover of \( (G^\bullet)^{\times m} \times (G^\circ)^{\times n} \), branching precisely like the spinor  \( \left[ \prod_{p=1}^m \prod_{q=1}^n (\cS(v^\bullet_p) - \cS(v^\circ_q)) \right]^{1/2} \). Following the discussion in \cite[Section 2.2]{CHI-mixed}, the quantities in \eqref{eq:Emusigma} can then be viewed as spinors on this double cover. 
Moreover, when considering mixed correlations of the type \eqref{eq:Emusigma}, the usual Kramers-Wannier duality (see again \cite[Proposition 3.3]{CCK}) ensures that \( G^\bullet \) and \( G^\circ \) play symmetric roles.

Among all correlators of the form \eqref{eq:Emusigma}, a particularly useful case occurs when one disorder vertex \( v^\bullet(c) \in G^\bullet \) and one spin \( v^\circ(c) \in G^\circ \) are nearest neighbors in \( \Lambda(G) \), connected by an edge of $\Lambda(G)$ corresponding to a corner \( c \in \Upsilon(G) \). In this setting, one can formally define the \emph{fermion} at the corner $c$ as
\begin{equation}
\label{eq:KC-chi-def}
\chi_c := \mu_{v^\bullet(c)} \sigma_{v^\circ(c)}.
\end{equation}
Using \eqref{eq:Emusigma}, one can then construct the \emph{Kadanoff-Ceva fermion} combinatorially by
\begin{equation}
\label{eq:KC-fermions}
X_{\varpi}(c) := \mathbb{E} \big[\, \chi_c \, \mu_{v_1^\bullet} \cdots \mu_{v_{m-1}^\bullet} \, \sigma_{v_1^\circ} \cdots \sigma_{v_{n-1}^\circ} \, \big].
\end{equation}
Due to the previous considerations, the observable \( X_\varpi(c) \) is defined up to a sign; however, it becomes fully well-defined when considered on the double cover \( \Upsilon^\times_\varpi(G) \).  

For each quad \( z = (v_0^\bullet, v_0^\circ, v_1^\bullet, v_1^\circ) \) (ordered counterclockwise, see \cite[Figure 3.A]{Che20} or Figure \ref{fig:graph-notations}), the Kadanoff-Ceva observables satisfy simple local linear relations, with coefficients determined solely by the Ising coupling associated with the quad $z$. These propagation equations were first introduced in \cite{dotsenko1983critical}, \cite{perk1980quadratic}, and \cite[Section 4.3]{Mercat-CMP}. More concretely, let \( \theta_z \) denote the abstract angle parametrizing the edge in \( G^\bullet \) associated with $z$, as in \eqref{eq:x=tan-theta}. Then, for any triplet of corners \( c_{pq} = (v^\bullet_p v^\circ_q) \) whose lifts to \( \Upsilon^\times_\varpi(G) \) are neighbors, one has
\begin{equation}
\label{eq:3-terms}
X(c_{pq}) = X(c_{p,1-q}) \cos \theta_z + X(c_{1-p,q}) \sin \theta_z.
\end{equation}
It can be readily checked that any solution to \eqref{eq:3-terms} naturally defines a spinor on the double cover \( \Upsilon^\times_\varpi(G) \).

In the present work, the main observable that will play an explicit role in our arguments is the so-called FK-martingale observable. To set the stage, consider a discrete simply connected domain \( \Omega^\delta \subset \cS^\delta \) with two marked boundary corners \( a^\delta \) and \( b^\delta \). We study the Ising model on \( \Omega^\delta \) with \emph{wired} boundary conditions along the arc \( (a^\delta b^\delta)^\circ \subset \partial \Omega^\delta \) and \emph{free} boundary conditions along the complementary arc \( (b^\delta a^\delta)^\bullet \subset \partial \Omega^\delta \). The FK observable is then defined, for corners \( c \in \Upsilon^\times \cap \Omega^\delta \), by
\begin{equation}
X_{\Omega^\delta}^{\mathrm{FK}}(c) := \mathbb{E}_{\Omega^\delta} \big[ \chi_c \, \sigma_{(a^\delta b^\delta)^\circ} \, \mu_{(b^\delta a^\delta)^\bullet} \big].
\end{equation}
Observe that at the boundary corners connecting the wired and free arcs, one has \( |X_{\Omega^\delta}^{\mathrm{FK}}(a^\delta)| = |X_{\Omega^\delta}^{\mathrm{FK}}(b^\delta)| = 1 \), since the (multiplicative) contributions of spins and disorders cancel each other there.

\medskip

To complete this brief review of Kadanoff-Ceva observables, we recall the generalized Dirac spinor \( \eta_c \), which provides a particular solution to the linear equation \eqref{eq:3-terms} \emph{in the isoradial case}. Given an embedding \( \cS : \Lambda(G) \to \mathbb{C} \), we define, following \cite{ChSmi2}:
\begin{equation} \label{eq:def-eta}
\eta_c := \varsigma \cdot \exp\Big[-\frac{i}{2} \arg(\cS(v^\bullet(c)) - \cS(v^\circ(c)))\Big], \qquad \varsigma := e^{i \pi/4},
\end{equation}
where the prefactor \( \varsigma = e^{i\pi/4} \) is a convenient normalization. To remove the sign ambiguity in \eqref{eq:def-eta}, one can again work on the double cover \( \Upsilon^\times(G) \). In particular, the products \( \eta_c X_\varpi(c) : \Upsilon_\varpi(G) \to \mathbb{C} \) are well-defined on \( \Upsilon_\varpi(G) \), which only branches over the set \( \varpi \). For notational simplicity, we will continue to use \eqref{eq:def-eta} even in the context of embeddings \( \cS \) that are not isoradial.
\subsection{Definition of an s-embeddings}\label{sub:semb-definition}

We now describe the embedding procedure proposed by Chelkak in \cite[Section 6]{Ch-ICM18} and developed in full detail in \cite{Che20}. To start, we recall the notion of an s-embedding as in \cite[Definition 2.1]{Che20}, which is based on the Kadanoff-Ceva framework. The underlying philosophy is not to assign Ising weights to a predetermined tiling of the plane by tangential quadrilaterals; instead, one seeks an embedding that naturally accommodates the given Ising weights. The core idea is to use a solution to \eqref{eq:3-terms} to construct a concrete embedding of the weighted graph.
\begin{definition}\label{def:cS-def}
Let $(G,x)$ be a weighted planar graph with the combinatorial structure of the plane, and let $\cX : \Upsilon^\times(G) \to \mathbb{C}$ be a solution to the full system \eqref{eq:3-terms} around each quad. We say that $\cS = \cS_\cX : \Lambda(G) \to \mathbb{C}$ is an s-embedding associated with $\cX$ if, for each corner $c \in \Upsilon^\times(G)$,
\begin{equation}
\label{eq:cS-def}
\cS(v^\bullet(c)) - \cS(v^\circ(c)) = (\cX(c))^2.
\end{equation}
For $z \in \Dm(G)$, the quadrilateral $\cS^\dm(z) \subset \mathbb{C}$ is defined by the edges connecting the vertices $\cS(v_0^\bullet(z))$, $\cS(v_0^\circ(z))$, $\cS(v_1^\bullet(z))$, and $\cS(v_1^\circ(z))$. The s-embedding $\cS$ is called \emph{proper} if these quadrilaterals do not overlap, and \emph{non-degenerate} if no $\cS^\dm(z)$ collapses into a segment. No convexity assumption is required for the quads.
\end{definition}

Given a fixed solution $\cX$ to \eqref{eq:3-terms}, it is a non-trivial task to ensure that the resulting embedding $\cS_\cX$ is proper and non-degenerate. One can also define for $\cS$ the positions of quad centers $\cS(z)$, as in \cite[Equation (2.5)]{Che20}:
\begin{equation}\label{eq:cS(z)-def}
\begin{aligned}
\cS(v_p^\bullet(z)) - \cS(z) &:= \cX(c_{p0}) \cX(c_{p1}) \cos\theta_z,\\
\cS(v_q^\circ(z)) - \cS(z) &:= -\cX(c_{0q}) \cX(c_{1q}) \sin\theta_z,
\end{aligned}
\end{equation}
where $c_{p0}$ and $c_{p1}$ (respectively $c_{0q}$ and $c_{1q}$) are neighboring corners in $\Upsilon^\times(G)$. The propagation equation \eqref{eq:3-terms} ensures that both \eqref{eq:cS-def} and \eqref{eq:cS(z)-def} are consistent. Geometrically (see Figure \ref{fig:graph-notations}), the image $\cS^\dm(z) \subset \mathbb{C}$ of a combinatorial quad $z \in \diamondsuit(G)$ is a quadrilateral tangent to a circle centered at $\cS(z)$, with radius $r_z$ determined by the values of $\cX$ (see \cite[Equation (2.7)]{Che20}). Denoting by $\phi_{v,z}$ the half-angle at $\cS(v)$ in the quad, one can recover the Ising weight $\theta_z$ from the geometric angles via
\begin{equation}
\label{eq:theta-from-S}
\tan\theta_z = \Biggl(\frac{\sin\phi_{v_0^\bullet,z}\,\sin\phi_{v_1^\bullet,z}}{\sin\phi_{v_0^\circ,z}\,\sin\phi_{v_1^\circ,z}}\Biggr)^{1/2}.
\end{equation}
When working with $s$-embedding framework, the large-scale properties of the origami map coin the criticality of the model. The following definition is recalled from \cite[Definition 2.2]{Che20} (see also \cite{KLRR, CLR1} for a general construction in the dimer context).

\begin{definition}\label{def:cQ-def}
Given $\cS = \cS_\cX$, the \emph{origami} function, denoted  $\cQ = \cQ_\cX : \Lambda(G) \to \mathbb{R}$, is a real-valued function defined up to some global additive constant. Its increments between two neighboring vertices $v^{\bullet}(c)$ and $v^\circ(c)$ separated by the corner $c$ are given by
\begin{equation}
\label{eq:cQ-def}
\cQ(v^\bullet(c))-\cQ(v^\circ(c))\ :=\ |\cX(c)|^2\,=\,|\cS(v^\bullet(c))-\cS(v^\circ(c))|\,.
\end{equation}
\end{definition}
We will often use the notation $|\cX(c)|^2:=\delta_c$ corresponding to the length of the edge of $\Lambda(G)$ attached to the corner $c$. The alternate sum of edge-lengths in a tangential quadrilateral vanishes, which ensures that the definition of $\cQ$ is consistent. One can see the function $\cQ$ as a folding of the tangential  quadrilaterals along their diagonals (see e.g. \cite[Section 8.2]{CLR1}), which makes $\cQ$ a $1$-Lipschitz in the $\cS $ plane. In the periodic context, $\cQ$ is periodic. Moreover, the periodic grid $\cS^\delta $ satisfy some so-called \Unif\, property.
\begin{definition}[Assumption \Unif\,]
	We say that the $s$-embedding $\cS$ satisfies the assumption $\Unif\,=\Uniff\ $ for some parameters $\delta,r_0,\theta_0$ if all edge-lengths in $\cS$ are comparable to $\delta$, meaning that for any  neighbouring $v^{\bullet}\in G^\bullet$ and $v^{\circ}\in G^\circ$ one has
	\begin{equation}
		r_0^{-1}\cdot \delta \leq  |\cS(v^{\bullet})- \cS(v^{\circ})| \leq r_0\cdot \delta,
	\end{equation}
and all the geometric angles in the quads $\cS$ are bounded from below by $\theta_0$.
\end{definition}

\subsection{S-holomorphic functions and associated primitives}\label{sub:HF-def}
We now briefly recall the notion of \emph{s-holomorphic functions}, which has been extended to the context of $s$-embeddings in \cite{Che20}. This concept was first introduced by Smirnov \cite[Definition 3.1]{Smirnov_Ising} for the critical square lattice, and later generalized by Chelkak and Smirnov \cite[Definition~3.1]{ChSmi2} to isoradial graphs. In recent works, $s$-holomorphic functions play a central role in applying discrete complex analysis methods to the Ising model. We present the general definition following \cite[Definition 2.4]{Che20}, where $\textrm{Proj}[\cdot, \eta \mathbb{R}]$ denotes the standard projection onto the line spanned by $\eta \in \mathbb{C}$.

\begin{definition}\label{def:s-hol}
A function $F$ defined on a subset of $\Dm(G)$ is said to be $s$-holomorphic if, for any two adjacent quads $z, z' \in \Dm(G)$ sharing an edge $[\cS(v^\circ(c)); \cS(v^\bullet(c))]$ corresponding to a corner $c$, one has
\begin{equation}
\label{eq:s-hol}
\textrm{Proj}[F(z), \eta_c \mathbb{R}] = \textrm{Proj}[F(z'), \eta_c \mathbb{R}].
\end{equation}
\end{definition}

This definition provides a direct correspondence between real-valued solutions to \eqref{eq:3-terms} and complex-valued $s$-holomorphic functions, as first observed in \cite[Proposition 2.5]{Che20} and also discussed in \cite[Appendix]{CLR1}.

\begin{proposition}\label{prop:shol=3term}
Let $\cS = \cS_\cX$ be a proper $s$-embedding and let $F$ be $s$-holomorphic on $\Dm(G)$. For a quad $z \in \Dm(G)$ and a corner $c \in \Upsilon^\times(G)$ of $z$, one can define the spinor $X$ at $c$ by
\begin{align}
X(c)\ &:=\ |\cS(v^\bullet(c))-\cS(v^\circ(c))|^{\frac{1}{2}} \cdot \Re[\overline{\eta}_c F(z)] \notag\\
&= \ \Re[\overline{\varsigma} \cX(c) \cdot F(z)] \ = \ \overline{\varsigma} \cX(c) \cdot \mathrm{Proj}[F(z); \eta_c \mathbb{R}].
\label{eq:X-from-F}
\end{align}
The assignment $c \mapsto X(c)$ satisfies the three-term relations \eqref{eq:3-terms} around the quad $z$. Conversely, for any real-valued solution $X : \Upsilon^\times(G) \to \mathbb{R}$ of \eqref{eq:3-terms}, there exists a unique $s$-holomorphic function $F$ on $\Dm(G)$ such that \eqref{eq:X-from-F} holds.
Moreover, when $F$ and $X$ are related by \eqref{eq:X-from-F}, the value of $F(z)$ can be reconstructed from the values of $X$ at any two corners $c_{pq}(z) \in \Upsilon^\times(G)$ and the geometry of the embedding $\cS$, for instance via the formula \cite[Corollary 2.6]{Che20}:
\begin{equation} \label{eq:F-from-X}
F(z) = -i \varsigma \cdot \frac{\overline{\cX(c_{01}(z))} \, X(c_{10}(z)) - \overline{\cX(c_{10}(z))} \, X(c_{01}(z))}{\Im[\overline{\cX(c_{01}(z))} \, \cX(c_{10}(z))]}.
\end{equation}
\end{proposition}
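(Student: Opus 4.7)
The plan is to reduce both directions of Proposition~\ref{prop:shol=3term} to the single identity
$X(c) = |\cX(c)|\,\Re[\overline{\eta_c}F(z)] = \Re[\overline{\varsigma}\,\cX(c)F(z)]$
asserted by \eqref{eq:X-from-F}. I would first establish this identity by rewriting $|\cX(c)|\,\overline{\eta_c} = \overline{\varsigma}\,\cX(c)$, which follows from the definition \eqref{eq:def-eta} of $\eta_c$ together with $\cX(c)^{2} = \cS(v^\bullet(c))-\cS(v^\circ(c))$ from \eqref{eq:cS-def}, the sign being fixed by a consistent lift to the double cover $\Upsilon^\times(G)$. Once the rewriting is in place, the forward direction becomes essentially tautological and the content is concentrated in the converse.

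\emph{Forward direction.} Assume $F$ is $s$-holomorphic. Well-definedness of $X(c)$ at a corner $c$ shared between two adjacent quads $z,z'$ follows immediately from \eqref{eq:s-hol}, since $\Re[\overline{\eta_c}F]$ only sees the projection of $F$ onto $\eta_c\mathbb{R}$. To get the three-term relation for $X$ around a fixed quad $z$, I would start from the three-term relation $\cX(c_{pq}) = \cX(c_{p,1-q})\cos\theta_z + \cX(c_{1-p,q})\sin\theta_z$ for $\cX$ itself, which is part of the defining data of an $s$-embedding by Definition~\ref{def:cS-def}, multiply it through by $\overline{\varsigma}\,F(z)$, and take real parts; the reality of $\cos\theta_z,\sin\theta_z$ is what allows the operation to commute with $\Re[\,\cdot\,]$ and yields exactly the claimed relation \eqref{eq:3-terms} for $X$.

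\emph{Converse direction.} Assume now $X:\Upsilon^\times(G)\to\mathbb{R}$ satisfies \eqref{eq:3-terms}. On a fixed quad $z$, those relations impose two $\mathbb{R}$-linear constraints among the four values $X(c_{pq})$, leaving a $2$-dimensional solution space matching $\dim_{\mathbb{R}}F(z) = 2$. Non-degeneracy of $\cS$ yields $\Im[\overline{\cX(c_{01}(z))}\,\cX(c_{10}(z))]\neq 0$, so the equations $\Re[\overline{\varsigma}\,\cX(c_{pq})F(z)] = X(c_{pq})$ for $(p,q) = (0,1),(1,0)$ form an invertible $2\times 2$ real-linear system in $F(z)\in\mathbb{C}$; Cramer's rule then produces the explicit expression \eqref{eq:F-from-X}. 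The three-term relations for $X$ then force the same identity automatically at the remaining corners $c_{00},c_{11}$, which is the consistency check that makes the construction unambiguous. Finally, $s$-holomorphicity of $F$ across a shared corner $c$ reduces to the fact that $\Re[\overline{\eta_c}F(z)] = X(c)/|\cX(c)|$ depends only on $c$ and not on which adjacent quad was used to compute $F(z)$.

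The main obstacle I anticipate is not conceptual but notational: carefully tracking the sign ambiguity of $\cX$ on the double cover $\Upsilon^\times(G)$ throughout the argument. The bilinear quantity $\overline{\varsigma}\,\cX(c)F(z)$ and the ratio $X(c)/|\cX(c)|$ are both sign-invariant, so any consistent lift on each quad is fine, but one has to verify that the $2\times 2$ inversion on a given quad and the three-term check at the remaining two corners are compatible with the chosen lift. Beyond this bookkeeping, the only substantive input is the non-degeneracy of $\cS$ guaranteeing invertibility of the $2\times 2$ system; everything else is finite-dimensional linear algebra glued together by the compatibility condition \eqref{eq:s-hol}.
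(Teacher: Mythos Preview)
Your proposal is correct and follows the natural approach. Note that the paper itself does not prove this proposition: it is recalled verbatim from \cite[Proposition~2.5 and Corollary~2.6]{Che20} (see also \cite[Appendix]{CLR1}) as background material, so there is no in-paper proof to compare against. Your argument---reducing the forward direction to $\Re[\,\cdot\,]$ applied to the three-term relation for $\cX$ and the converse to inverting a $2\times 2$ real system whose determinant is $\Im[\overline{\cX(c_{01})}\,\cX(c_{10})]\neq 0$ by non-degeneracy---is exactly the standard one from those references.
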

Within the $s$-embedding framework, the large-scale behavior of $s$-holomorphic functions is controlled by their local equation and boundary data. At the discrete level, this can be analyzed through two distinct integration procedures. The first is a natural extension of the standard integration method for discrete holomorphic functions, adapted to account for the presence of the origami map. The second corresponds to a generalization of Smirnov’s primitive of the square of an $s$-holomorphic function. The first approach is studied in detail in \cite[Proposition 6.15]{CLR1} and is instrumental in deriving local regularity estimates and the limiting continuous local equation. The second approach, originally introduced by Smirnov in \cite{Smirnov_Ising} for the critical square lattice, provides a way to identify discrete Riemann-Hilbert boundary conditions in the Ising model. 

We start by defining the primitive $I_{\mathbb{C}}$. For an $s$-holomorphic function $F$ defined on $\diamondsuit(G)$, one can set (up to a global additive constant) \cite[Section 2.3]{Che20}:
\begin{equation}\label{eq:def-I_C}
I_{\mathbb{C}}[F]:= \int \big( \overline{\varsigma} F\, d\cS + \varsigma \overline{F} \, d\cQ \big).
\end{equation}
For a quad $z \in \diamondsuit(G)$ with vertices $v_{1,2}^{\bullet}, v_{1,2}^{\circ}$, one has, for $\star \in \{\bullet, \circ\}$,
\begin{equation}
I_{\mathbb{C}}[F](v_2^\star) - I_{\mathbb{C}}[F](v_1^\star) = \overline{\varsigma} F(z) \, [\cS(v_2^\star) - \cS(v_1^\star)] + \varsigma \overline{F(z)} \, [\cQ(v_2^\star) - \cQ(v_1^\star)].
\end{equation}
Thanks to extensions of the origami map to the full complex plane (see \cite{CLR1}, \cite[Section 2.3]{Che20}, or \cite[Section 2.4]{MahPar25a}), the definition \eqref{eq:def-I_C} can be consistently extended over all of $\mathbb{C}$.

Alternatively, one can define the \emph{primitive of the square} $H_X$ in a purely combinatorial manner, relying solely on the spinor $X$ satisfying the three-term relation \eqref{eq:3-terms}. This construction does not require an explicit embedding of the graph. Following \cite[Definition 2.8]{Che20}, one sets:

\begin{definition}\label{def:HX-def}
Let $X$ be a spinor on $\Upsilon^\times(G)$ solving \eqref{eq:3-terms}. Then $H_X$ is defined (up to a global additive constant) on $\Lambda(G) \cup \Dm(G)$ by
\begin{equation}
\label{eq:HX-def}
\begin{array}{rcll}
H_X(v_p^\bullet(z)) - H_X(z) &:=& X(c_{p0}(z)) X(c_{p1}(z)) \cos \theta_z, & p=0,1, \\[1mm]
H_X(v_q^\circ(z)) - H_X(z) &:=& - X(c_{0q}(z)) X(c_{1q}(z)) \sin \theta_z, & q=0,1, \\[1mm]
H_X(v_p^\bullet(z)) - H_X(v_q^\circ(z)) &:=& (X(c_{pq}(z)))^2,
\end{array}
\end{equation}
in analogy with \eqref{eq:cS-def} and \eqref{eq:cS(z)-def}.
\end{definition}

The consistency of this definition follows from \eqref{eq:3-terms}. When an $s$-embedding $\cS$ of $(G,x)$ is given, the correspondence between $X$ and the associated $s$-holomorphic function $F$ (as in Proposition \ref{prop:shol=3term}) allows one to interpret $H_X$ in terms of $F$. Concretely, one can define the function $H_F$ as in \cite[Equation (2.17)]{Che20}:
\begin{equation}
\label{eq:HF-def}
H_F := \int \Re\big(\overline{\varsigma}^2 F^2 d\cS + |F|^2 d\cQ \big) = \int \big(\Im(F^2 d\cS) + \Re(|F|^2 d\cQ) \big),
\end{equation}
defined on $\Lambda(G) \cup \Dm(G)$. By extending $\cQ$ piecewise-affinely over each face of the corresponding $t$-embedding $\cT = \cS$ (see \cite[Proposition 3.10]{CLR1}), $H_F$ can be consistently extended to the entire plane. This lemma confirms that the combinatorial definition \eqref{eq:HX-def} and the $s$-holomorphic-based definition \eqref{eq:HF-def} yield the \emph{same} function.

\begin{lemma}{\cite[Lemma 2.9]{Che20}} 
Let $F$ be defined on $\Dm(G)$ and let $X$ be a spinor on $\Upsilon^\times(G)$ related to $F$ via \eqref{eq:X-from-F}. Then, the functions $H_F$ and $H_X$ coincide up to an additive constant.
\end{lemma}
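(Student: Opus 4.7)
The plan is to reduce the lemma to a purely local algebraic identity on each quad $z\in\Dm(G)$, and then conclude by the connectedness of $\Lambda(G)\cup\Dm(G)$. Both $H_X$ and $H_F$ are specified only up to an additive constant, so I would fix a base vertex and show that their discrete gradients coincide on every elementary edge.

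The first step is to verify well-posedness of both primitives around each quad. For $H_X$, the formulas of Definition~\ref{def:HX-def} must be consistent when composed around a quad $z$: going from $v^\circ_q$ to $v^\bullet_p$ via $z$ gives $X(c_{p0})X(c_{p1})\cos\theta_z + X(c_{0q})X(c_{1q})\sin\theta_z$, which must equal the direct increment $(X(c_{pq}))^2$; expanding with the three-term relation~\eqref{eq:3-terms} shows that these expressions agree algebraically. For $H_F$, the analogue is that the integrand in~\eqref{eq:HF-def} closes around each $\Dm$-face, which is equivalent to $s$-holomorphicity~\eqref{eq:s-hol} via the correspondence~\eqref{eq:X-from-F}.

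The core algebraic identity enters through the primal-to-dual increment across a corner $c$ in a quad $z$. Using~\eqref{eq:cS-def}, \eqref{eq:cQ-def} and~\eqref{eq:HF-def},
\[
H_F(v^\bullet(c))-H_F(v^\circ(c)) = \Im\bigl[(F(z)\cX(c))^2\bigr] + |F(z)\cX(c)|^2.
\]
Setting $w=F(z)\cX(c)$, I would invoke the elementary identity $|w|^2+\Im(w^2) = 2(\Re(\overline{\varsigma} w))^2$ (both sides equal $(\Re w + \Im w)^2$, since $\overline{\varsigma}=e^{-i\pi/4}$), which by~\eqref{eq:X-from-F} rewrites the right-hand side as a multiple of $X(c)^2$, matching the $H_X$-increment of Definition~\ref{def:HX-def} up to the universal scaling convention built into the integral primitive. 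For the $z\to v^\bullet_p$ step, one substitutes the explicit formula $\cS(v^\bullet_p)-\cS(z)=\cX(c_{p0})\cX(c_{p1})\cos\theta_z$ from~\eqref{eq:cS(z)-def}, together with the companion expression for $\cQ(v^\bullet_p)-\cQ(z)$ coming from the piecewise-affine extension of the origami on the $t$-embedding $\cT = \cS$ (see \cite{CLR1}); the bilinear identity $\Re(\overline{\varsigma} u)\Re(\overline{\varsigma} v) = \tfrac{1}{2}(\Re(u\bar v)+\Im(uv))$ applied with $u=\cX(c_{p0})F(z)$ and $v=\cX(c_{p1})F(z)$ then reduces the right-hand side to $X(c_{p0})X(c_{p1})\cos\theta_z$. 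The $z\to v^\circ_q$ case is completely analogous, the $-\sin\theta_z$ convention of Definition~\ref{def:HX-def} reflecting the sign structure of the corresponding $\cS(v^\circ_q)-\cS(z)$ expression.

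The principal obstacle is the center-to-vertex case, because the value of $\cQ(v^\bullet_p)-\cQ(z)$ is not given directly by Definition~\ref{def:cQ-def}, which only prescribes $\cQ$ along $\Lambda(G)$-edges. One must invoke the piecewise-affine extension of $\cQ$ inside each tangential quadrilateral $\cS^{\Dm}(z)$ and check that this extension produces exactly the expression $\cos\theta_z\cdot\Re(\cX(c_{p0})\overline{\cX(c_{p1})})$ needed to cancel the $|F(z)|^2$ contribution from $H_F$. Once this bookkeeping is handled, the remaining manipulations are routine algebra, and summing matched increments along any path from the base vertex yields the claimed equality of $H_F$ and $H_X$ up to an additive constant.
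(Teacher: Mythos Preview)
The paper does not supply its own proof of this lemma: it is simply quoted from \cite[Lemma~2.9]{Che20} without argument. Your approach---reducing to a local check on each quad and verifying that the increments of $H_F$ and $H_X$ agree across every edge of $\Lambda(G)\cup\Dm(G)$---is the standard and correct route, and your key identity $|w|^2+\Im(w^2)=2(\Re(\overline{\varsigma}w))^2$ with $w=F(z)\cX(c)$ is exactly what drives the computation.

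There is, however, one point you should not paper over. With the definitions \eqref{eq:HX-def} and \eqref{eq:HF-def} taken literally, your own identity gives
\[
H_F(v^\bullet(c))-H_F(v^\circ(c))=|w|^2+\Im(w^2)=2\bigl(\Re[\overline{\varsigma}\cX(c)F(z)]\bigr)^2=2\,X(c)^2,
\]
whereas Definition~\ref{def:HX-def} prescribes $H_X(v^\bullet(c))-H_X(v^\circ(c))=X(c)^2$. So the two primitives differ by a global factor of~$2$, not merely an additive constant. Your phrase ``up to the universal scaling convention built into the integral primitive'' is hiding a genuine factor rather than a convention; in the source \cite{Che20} this is handled by a $\tfrac12$ in front of the integral defining $H_F$, which is absent in the transcription~\eqref{eq:HF-def} here. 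You should either restore that $\tfrac12$, or state explicitly that $H_F=2H_X+\mathrm{const}$ under the present normalisations. The same factor propagates to the $z\to v^\bullet_p$ and $z\to v^\circ_q$ increments, so once it is fixed at the corner step the rest of your argument goes through cleanly.
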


In the special case of an isoradial embedding $\cS$, the origami map $\cQ$ takes constant values on both $G^\bullet$ and $G^\circ$, as all edges of each quad $\cS^{\diamond}(z)$ have the same length. Consequently, $H_F$ reduces to the primitive of $\Im[F^2 d\cS]$, recovering the original construction from \cite[Section~3.3]{ChSmi2}. For the FK observable $X = X_{\Omega^\delta}^{\mathrm{FK}}$, the function $H_X$ satisfies the boundary conditions $H_X = 1$ along the free arc $(b^\delta a^\delta)^\bullet$ and $H_X = 0$ along the wired arc $(a^\delta b^\delta)^\circ$. In particular, the \emph{maximum principle} recalled in \cite[Proposition 2.11 and Corollary 2.12]{Che20} ensures that $H_X$ takes values in the interval $[0,1]$ throughout the domain $\Omega^\delta$.

Finally, we note an important property discussed in detail in \cite[Theorem 2.18 and Remark 2.12]{Che20}. In the periodic setting considered here, if a family of $s$-holomorphic functions $F^\delta$ is uniformly bounded on an open set $U$, then this family is precompact in the topology of uniform convergence on compact subsets as $\delta \to 0$. Moreover, these functions are $\beta(\kappa)$-H\"older continuous from scales comparable to $\delta$ onward. In particular, on compacts that are a fixed distance away from the boundary, the $s$-holomorphic function associated with the FK observable remains bounded and is $\beta(\kappa)$-H\"older at all scales. In particular, one can repeat the argument of \cite[Section 2.7]{Che20} to show that the scaling limit $f$ of a converging sequence $(F^\delta)_{\delta >0}$ is holomorphic on compacts.
\subsection{Basic properties of the FK random cluster model}\label{sub:basic-properties-FK}

In this section, we briefly recall some fundamental properties of the FK-Ising model, which will be essential for our subsequent analysis and for relating to the results of \cite{DMT21}. We denote the FK-Ising measure by $\phi$. Let $\leq$ denote the natural partial order on configurations $\{0,1\}^{E^\star}$: for $\omega,\omega'\in\{0,1\}^{E^\star}$, we write $\omega\le \omega'$ if $\omega(e^\star)\le \omega'(e^\star)$ for every $e^\star\in E^\star$. An event $A$ is called \emph{increasing} if whenever $\omega\in A$ and $\omega\le \omega'$, it follows that $\omega'\in A$. Intuitively, increasing events are preserved under the addition of open edges.  

Similarly, a boundary condition $\xi'$ is said to \emph{dominate} another boundary condition $\xi$ if every set of vertices wired together in $\xi$ is also wired in $\xi'$, which we denote $\xi'\ge \xi$. We summarize the key properties needed for our analysis below:

\begin{itemize}
    \item \textbf{Positive association:} For any two increasing events $A$ and $B$ under a boundary condition $\xi$,
    \begin{equation}
        \phi_{G}^{\xi}[A \cap B] \ge \phi_{G}^{\xi}[A] \cdot \phi_{G}^{\xi}[B].
    \end{equation}

    \item \textbf{Boundary monotonicity:} For an increasing event $A$ and boundary conditions $\xi'\ge \xi$,
    \begin{equation}
        \phi_{G}^{\xi'}[A] \ge \phi_{G}^{\xi}[A].
    \end{equation}

    \item \textbf{Domain Markov property:} Let $H$ be a subgraph of $G$, and $\xi$ a boundary condition on $G$. Then
    \begin{equation}
        \phi_{G}^{\xi}[\omega \textrm{ on } H \mid \omega \textrm{ on } G\backslash H] = \phi_{H}^{\zeta}[\omega \textrm{ on } H],
    \end{equation}
    where $\zeta$ is the induced boundary condition on $\partial H$, obtained by wiring together vertices that are connected in $G\backslash H$ through $\omega$, possibly also using connections prescribed by $\xi$.
    \item \textbf{Mixing property:} There exist a constant $c_{mix}>0$, only depending on constants in \Unif\,, such that for every $R\geq 1$, every $\mathcal{D}\subset \Lambda_{R} $, every boundary condition $\xi$ on $\mathcal{D}$ and any event $A$ on edges in $\Lambda_{R/2}$, one has
    \begin{equation}
    	c_{mix} \phi^{0}_{\mathcal{D}}[A] \leq \phi^{\xi}_{\mathcal{D}}[A] \leq c_{mix}^{-1} \phi^{0}_{\mathcal{D}}[A].
    \end{equation}
\end{itemize}

\subsection{Continuous correlation functions on isoradial grids}\label{sub:correlation}
Let us now recall the construction of correlation function between points, originally proved in \cite{CHI} and explained in a more systematic way in \cite{CHI-mixed}, including the so-called \emph{fusion rules}, which makes an analytic connection between the OPE in the Conformal Field Theory associated to the critical Ising model and the behaviour of the scaling limits of the planar Ising model when singularities are collapsed to each other. Let us now focus on the definition of the continuous correlation functions attached to the Ising spin field. 

Let $\Omega \neq \mathbb{C}$ be a simply connected domain of the plane, with two marked interior points $v\neq w$. One can then define the double cover $\Omega_{[v,w]}$ of $\Omega$, ramified (with a locally square root type behaviour) around $v$ and $w$. There exist, up to a global sign choice, a unique \emph{holomorphic spinor} $f^{\Omega}_{[v,w]}$ on $\Omega_{[v,w]}$ such that
\begin{itemize}
	\item $h:= \int \Im[(f^{\Omega}_{[v,w]})^2(z)dz]$ extends continuously up to the boundary of $\Omega$ and is constant (say vanishing) along $\partial_{\Omega} $.
	\item One has the following asymptotic
	\begin{equation*}
		f^{\Omega}_{[v,w]}(z)=e^{-i\frac{\pi}{4}}(z-v)^{-\frac{1}{2}}+O((z-v)^{\frac{1}{2}}) \quad \mathrm{ as } \quad z\to v
	\end{equation*}
	and there exist an (a priori unknown) constant $B_{\Omega}(v,w)\in \mathbb{R}$ such that
		\begin{equation*}
		f^{\Omega}_{[v,w]}(z)=B_{\Omega}(v,w)e^{i\frac{\pi}{4}}(z-w)^{-\frac{1}{2}}+O((z-w)^{\frac{1}{2}})\quad \mathrm{ as } \quad z\to w.
	\end{equation*}
It is possible to fix $f^{\Omega}_{[v,w]}$ by requiring that $B_{\Omega}(v,w)\geq 0$. Note that this problem is defined via some Riemann-Hilbert boundary value problem, which means that conformal covariance of the boundary itself allows to prove conformal covariance to the associated solutions. In the present context, for $\varphi:\Omega \to \Omega'$ a conformal map, one has
\begin{equation}
	f^{\Omega}_{[v,w]}(z)=f^{\Omega'}_{\varphi(v),\varphi(w)}(\varphi(z))\cdot (\varphi'(z))^\frac{1}{2}.
\end{equation}
\end{itemize}

To reconstruct the correlation function, one uses the expansion of $f^{\Omega}_{[v,w]}$ near one of its singularities. More precisely, there exist $\mathcal{A}(v,w)\in \mathbb{C}$ such that
	\begin{equation*}
		f^{\Omega}_{[v,w]}(z)=e^{-i\frac{\pi}{4}}(z-v)^{-\frac{1}{2}}+2\mathcal{A}_{\Omega}(v,w)\cdot (z-v)^{\frac{1}{2}}+O((z-v)^{\frac{3}{2}}) \mathrm{ as } z\to v
	\end{equation*}

Then one can \emph{define} the correlation function on $\Omega $ as
\begin{equation}
	\langle \sigma_{u_1} \sigma_{u_2} \rangle_{\Omega}^{(\mathrm{w})}:= \exp \int^{(u_1,u_2)} \Re [\mathcal{A}_{\Omega}(v,w)dv+\mathcal{A}_{\Omega}(w,v)dw],
\end{equation}
 where the overall normalisation is chosen to that $\langle \sigma_{u_1} \sigma_{u_2} \rangle_{\Omega}^{(\mathrm{w})}\sim |u_1 - u_2|^{-\frac14}$ as $u_1 \to u_2 $. In particular one has the conformal rule
 \begin{equation}
 	\langle \sigma_{u_1} \sigma_{u_2} \rangle_{\varphi(\Omega)}^{(\mathrm{w})}=\langle \sigma_{\varphi(u_1)} \sigma_{\varphi(u_2)} \rangle_{\varphi(\Omega)}^{(\mathrm{w})}\cdot|\varphi(u_1)|^{\frac18}\cdot|\varphi(u_2)|^{\frac18}.
 \end{equation}
In the upper half-plane, one has the explicit formula 
\begin{equation*}
	\langle \sigma_a \sigma_{b} \rangle^{+}_{\mathbb{H}} = \frac{2^\frac{1}{2}}{(4\Im[a]\Im[b])^{\frac{1}{8}}} \cdot \Bigg[ \frac{1}{2} \Bigg( \Bigg| \frac{b-\bar{a}}{b-a} \Bigg|^{\frac12} + \Bigg| \frac{b-a}{b-\bar{a}} \Bigg|^{\frac12} \Bigg)  \Bigg]^{\frac12}.
\end{equation*}

\section{Universality of the half-plane one-arm exponent and proof of Theorem \ref{thm:SuperStrongRSW}}

The goal of this section is to use the Kadanoff-Ceva fermionic formalism to establish, through new and comparatively soft arguments, the universality of the one-arm exponent in the half-plane for doubly-periodic graphs. Specifically, we show that on a periodic lattice of mesh size~$1$, the probability that a one-arm event starting from a boundary face of a half-plane discretisation reaches the boundary of a box of size~$n$ decays as $n^{-1/2}$, up to multiplicative constants (both upper and lower) depending only on the parameters in \Unif. To achieve this, we introduce a new method for controlling the boundary behaviour of Kadanoff-Ceva fermions near straight cuts. This approach can also be applied to prove the conformal invariance of FK martingale observables in smooth domains with sufficiently regular discretisations \cite[Chapter~6]{MahPHD}. Once the universality of the one-arm exponent is established, we incorporate it into the framework of \cite{DMT21}, together with standard surgery techniques. This yields a proof of Theorem~\ref{thm:SuperStrongRSW}, while circumventing the need for $\pi/2$-rotation invariance and self-duality, which play a central role in the classical arguments.
\subsection{Identification of the one-arm exponent in the half-plane}

Fix a face $f \in \cS^\delta$, and let $z_f$ be the center of its tangential circle. Denote by $\mathbb{H}^\delta(f) \subset \cS^\delta$ an approximation (by the grid $\cS^\delta$ up to $10\delta$) of the half-plane $z_f + \mathbb{H} \subset \mathbb{C}$. By \emph{discretisation}, we mean that the boundary of $\mathbb{H}^\delta(f)$, viewed as a polygonal line in $\mathbb{C}$, lies within Hausdorff distance at most $10\delta$ from the line $z_f + \mathbb{R}$. When computing one-arm exponents, the precise choice of this discretisation will be irrelevant, and all results hold uniformly over such choices. For simplicity, we assume that the boundary of $\mathbb{H}^\delta(f)$ is periodic in the horizontal direction. 

In the infinite-volume limit, the FK-Ising model can be defined on $\mathbb{H}^\delta(f)$, with weights inherited from those of $\cS^\delta$, and with \emph{free} boundary conditions along $\partial \mathbb{H}^\delta(f)$. By Theorem~\ref{thm:RSW-FK}, this free measure is unique; denote it by $\mathbb{P}_{\mathbb{H}^\delta(f)}$. Next, define the square $\Lambda_R(z_f) \subset \mathbb{C}$ of width $2R$, centered so that the midpoint of its bottom side is $z_f$. We then fix (again up to $10\delta$) a discretisation $\Lambda^\delta_R(f)$ of $\Lambda_R(z_f)$, assuming for simplicity that the bottom boundary of $\Lambda^\delta_R(f)$ is contained in $\partial \mathbb{H}^\delta(f)$. This setup allows us to define the following event for the FK-Ising model:
\begin{equation*}
	\Big \{ f \overset{\mathbb{H}^\delta(f)}{\longleftrightarrow} \partial \Lambda^\delta_{R}(f)\Big\},
\end{equation*}
This event asserts the existence of a cluster of open edges in $\Lambda^\delta_{R}(f)$ connecting $f$ to the left, right, or top boundary of $\Lambda^\delta_{R}(f)$. For doubly-periodic graphs, the probability of this event corresponds precisely to the one-arm exponent in the half-plane starting from $f$. By the finite energy property and translation invariance of the boundary of $\mathbb{H}^\delta(f)$, this one-arm exponent is identical up to a universal multiplicative constant for all faces of $\mathcal{T}$. We are now ready to state the main proposition of this section.
\begin{prop}\label{prop:one-arm-half-plane}
	There exist positive constants $C,L$, only depending on constants in \Unif\ such that for any $R\geq L\cdot \delta  $
	\begin{equation}
		C^{-1} \Big(\frac{\delta}{R}\Big)^{\frac12} \leq  \mathbb{P}_{\mathbb{H}^\delta(f)}\Big[  f \overset{\mathbb{H}^\delta(f)}{\longleftrightarrow} \partial \Lambda^\delta_{R}(f) \Big] \leq C \Big(\frac{\delta}{R}\Big)^{\frac12}.
	\end{equation} 
\end{prop}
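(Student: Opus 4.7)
The plan is to realise the half-plane one-arm probability as (the magnitude of) an FK fermionic observable evaluated at a boundary corner attached to $f$. Introduce the discrete Dobrushin domain
$\Omega^\delta := \Lambda_R^\delta(f)\cap \mathbb{H}^\delta(f)$
with \emph{free} boundary on the bottom segment contained in $\partial \mathbb{H}^\delta(f)$ and \emph{wired} on the three remaining sides of the box, the two transition corners sitting at the bottom corners. A standard Kadanoff-Ceva computation based on \eqref{eq:Emusigma} shows that, for a corner $c_f$ attached to the face $f$ along the free arc, $|X^{\mathrm{FK}}_{\Omega^\delta}(c_f)|$ equals, up to a multiplicative factor determined only by the local geometry of $\cS^\delta$ near $f$, the connection probability $\mathbb{P}^{\mathrm{Dob}}_{\Omega^\delta}[f\leftrightarrow \text{wired arc}]$. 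Combining the mixing property from Section~\ref{sub:basic-properties-FK} with the RSW estimates of Theorem~\ref{thm:RSW-FK} shows that this probability is in turn comparable to the free half-plane probability in the statement, so it suffices to establish the two-sided bound $|X^{\mathrm{FK}}_{\Omega^\delta}(c_f)| \asymp (\delta/R)^{1/2}$.

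\textbf{Bulk bound via the continuum observable.}
The function $F^\delta := X^{\mathrm{FK}}_{\Omega^\delta}/\eta$ is $s$-holomorphic on $\Omega^\delta$, and its primitive $H_{F^\delta}$ takes boundary values $0$ on the free arc and $1$ on the wired arc, hence lies in $[0,1]$ by the maximum principle. Using the precompactness and H\"older regularity results recalled at the end of Section~\ref{sub:HF-def}, the rescaled family $R^{1/2}F^\delta$ is uniformly bounded on every compact subset of $\Omega := \Lambda_R(z_f)\cap \mathbb{H}$ kept at macroscopic distance from the boundary, and converges along subsequences to the unique continuous $s$-holomorphic observable on the upper half-disk solving the corresponding Riemann-Hilbert boundary value problem, with inverse-square-root singularities at the two transition points. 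This yields the desired two-sided bound $|F^\delta| \asymp R^{-1/2}$ at any fixed interior point of $\Omega$.

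\textbf{Boundary analysis on flat arcs (main obstacle).}
The crux of the proof is to transfer this bulk estimate down to the corner $c_f$, which lies at distance $O(\delta)$ from the free arc --- exactly the scale at which the H\"older regularity of $s$-holomorphic functions becomes useless. The plan is to exploit the horizontal periodicity of $\partial \mathbb{H}^\delta(f)$: attach, via the surgery techniques of \cite{Mah23, MahPHD}, finitely many layers of boundary kites along the bottom of $\cS^\delta$ to produce an auxiliary $s$-embedding $\widetilde\cS^\delta$ whose free boundary coincides \emph{exactly} with a horizontal straight line. On $\widetilde\cS^\delta$, horizontal periodicity permits a Fourier / transfer-matrix analysis of $s$-holomorphic functions satisfying the free boundary condition along the straight arc: the zero mode yields an explicit discrete analogue of the Cauchy-type kernel $(z-z_0)^{-1/2}$, the exponent $1/2$ being forced by the criticality condition of Theorem~\ref{thm:cimasoni-duminil}, which pins the leading eigenvalue of the associated transfer matrix to the critical value. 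Comparing $F^\delta$ against this explicit family through the discrete maximum principle for $H_{F^\delta}$, together with a finite-energy FK-Ising coupling on the difference $\widetilde\cS^\delta \setminus \cS^\delta$, propagates the bulk $R^{-1/2}$ estimate all the way down to $c_f$ with constants depending only on the parameters of \Unif. The main technical difficulty, which replaces the boundary Harnack principle of \cite{ChSmi2} used on isoradial grids, is to verify that the flat-boundary transfer-matrix decomposition and the kite-surgery coupling together deliver matching multiplicative constants in both the upper and lower directions.
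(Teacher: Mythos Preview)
Your reduction step and the link between $|X^{\mathrm{FK}}_{\Omega^\delta}(c_f)|$ and the arm probability are fine and match the paper. The divergence is in the boundary analysis, and there your proposal has a genuine gap while the paper's argument is both different and much softer.

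\textbf{The gap.} Your third step promises a transfer-matrix/Fourier decomposition on the kite-augmented lattice producing an explicit discrete $(z-z_0)^{-1/2}$ kernel, with the exponent pinned by the criticality condition. None of this is carried out, and it is precisely the type of explicit full-plane construction that the paper explicitly flags as unavailable on general periodic $s$-embeddings (no discrete-exponential representation is known there). So as written, the proposal assumes the hard part.

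\textbf{What the paper actually does.} The boundary-to-bulk transfer requires no kernel at all.
For the \emph{upper bound}, compare $H_{X^\delta}$ with the primitive $H_{\mathrm{comp}}$ of the \emph{constant} $s$-holomorphic function $F_{\mathrm{comp}}=2e^{-i\pi/4}$; by periodicity of $\cQ$, $H_{\mathrm{comp}}=1-O(\delta)$ on the bottom free arc and decreases linearly upward, so the comparison principle for $H$-functions forces $H_{X^\delta}\ge 1-O(\delta)$ in a ball of radius $\asymp\delta$ just above $f$. The oscillation bound $\mathrm{osc}\,H_{X^\delta}=O(\delta)$ there, fed into the gradient estimate \cite[Theorem~2.18]{Che20}, gives $|F^\delta|=O(1)$ at distance $\asymp\delta$ from the arc, and finite energy pushes this to $c_f$.
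For the \emph{lower bound}, argue by contradiction: if $\delta^{-1/2}X^\delta(c_f)\to 0$ along a subsequence, then (by RSW and periodicity) the same holds uniformly on a macroscopic piece of the free arc. Weak Beurling estimates on the $S$-graph Laplacian then force $\Re f=\Im f=0$ on that boundary segment for the subsequential limit $f$, hence $f\equiv 0$, contradicting the non-triviality of the continuum FK observable established in \cite{Che20}.

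The kite surgery you invoke is indeed used in the paper, but later and for a different purpose (controlling the boundary argument of $F$ along wired arcs in the $(m,\ell)$-corner lemma), not for the one-arm exponent itself.
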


This result is known on isoradial lattices from \cite[Section~5.1]{ChSmi2}, where it follows from a comparison between the primitive $H[X^{\mathrm{FK}}_{\Omega^\delta}]$ and discrete harmonic functions with Dirichlet boundary conditions near straight arcs. The argument relies on the sub/super-harmonicity of $H[X^{\mathrm{FK}}_{\Omega^\delta}]$ on the two halves of the lattice, together with sharp estimates of discrete harmonic measures. In the periodic case, however, this approach no longer applies, since $H[X^{\mathrm{FK}}_{\Omega^\delta}]$ is \emph{not} known to be harmonic for any simple local Laplacian.

We begin by establishing the upper bound stated in the next proposition, using new and soft arguments to control the growth near straight arcs of primitives of $s$-holomorphic functions. This framework was first introduced in \cite[Chapter~6]{MahPHD} and is presented here for the first time in a research article. To simplify the exposition, we first show that the probability of a one-arm event in a half-plane at distance $n$ decays as $n^{-1/2}$ by proving an analogous, and computationally simpler, statement in a rectangle. In this setting (see Figure \ref{fig:FK-proof}), the free arc includes half of the vertical sides of $\Lambda^\delta_{R}(f)$, while the remaining boundary segment above the line $y = \Im[z_f] + R$ is wired. More precisely, fix a corner $a^\delta \in \partial \Lambda_{R^\delta}(f)$ located at $(\Re[z_f] - R, \Im[z_f] + R) + O(\delta)$ and a corner $b^\delta \in \partial \Lambda_{R^\delta}(f)$ located at $(\Re[z_f] + R, \Im[z_f] + R) + O(\delta)$, where $O$ is a uniform constant smaller than $10$.

We consider the FK-Ising model on $\Lambda^\delta_{R}(f) \cap \mathbb{H}^\delta(f)$ with free boundary conditions on the bottom arc $(a^\delta b^\delta)^\bullet$ and wired boundary conditions on the complementary arc $(b^\delta a^\delta)^\circ$. For simplicity, we continue to assume that the bottom discretisation of $\Lambda^\delta_{R}$ is contained in $\partial \mathbb{H}^\delta(f)$. In the next lemma, we denote by $\mathbb{P}^{\circ,\bullet}_{\Lambda^\delta_{R}}$ the FK-Ising  measure on $\Lambda^\delta_{R}$ with these boundary conditions. This allows us to define the event
\begin{equation*}
	\Big \{ f \overset{\bullet,\circ}{\longleftrightarrow} \partial \Lambda^\delta_{R}(f) \Big\},
\end{equation*}
encoding the fact that $f$ is connected by a path of open edges to the boundary arc $(b^\delta a^\delta)^\circ$. The following lemma is the building block to prove Proposition \ref{prop:one-arm-half-plane}.
\begin{lemma}
		There exist positive constants $C,L$, only depending on the constants in \Unif\ such that for any $R\geq L\cdot \delta  $
	\begin{equation}
C^{-1} \cdot \Big(\frac{\delta}{R}\Big)^{\frac12} \leq \mathbb{P}^{\circ,\bullet}_{\Lambda^\delta_{R}}\Big[ f \overset{\bullet,\circ}{\longleftrightarrow} \partial \Lambda^\delta_{R}(f) \Big] \leq C \cdot \Big(\frac{\delta}{R}\Big)^{\frac12}.
	\end{equation} 
\end{lemma}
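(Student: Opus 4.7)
The approach is to interpret the connection probability as the modulus of a Kadanoff--Ceva fermion and reduce its asymptotics to a continuous Riemann--Hilbert problem. Consider the FK martingale observable $X^{\mathrm{FK}}=X^{\mathrm{FK}}_{\Lambda^\delta_R(f)}$ for the prescribed mixed boundary conditions (free on $(a^\delta b^\delta)^\bullet$, wired on $(b^\delta a^\delta)^\circ$), together with its associated $s$-holomorphic function $F^{\mathrm{FK}}$ via \eqref{eq:F-from-X}. A standard Kadanoff--Ceva identity (using the Edwards--Sokal coupling and the normalization $|X^{\mathrm{FK}}(c)|^2 \asymp \delta$ on boundary corners in the spirit of the identity $|X^{\mathrm{FK}}(a^\delta)|=|X^{\mathrm{FK}}(b^\delta)|=1$ recalled after \eqref{eq:KC-fermions}) relates the probability to the boundary value of this observable:
\begin{equation*}
\mathbb{P}^{\circ,\bullet}_{\Lambda^\delta_R}\bigl[ f \overset{\bullet,\circ}{\longleftrightarrow} \partial \Lambda^\delta_{R}(f) \bigr] \;\asymp\; \sqrt{\delta}\cdot |F^{\mathrm{FK}}(z_f)|.
\end{equation*}
I would then rescale, setting $F^\delta := \delta^{-1/2} F^{\mathrm{FK}}$ on the macroscopic rectangle $[-R,R]\times[0,R]$, so that the associated primitive $H_{F^\delta}$ takes the boundary values $0$ on the wired arc and $1$ on the free arc and is uniformly bounded in $[0,1]$ by the maximum principle (\cite[Corollary 2.12]{Che20}).

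Using the precompactness and interior H\"older regularity of $s$-holomorphic functions recalled in Section~\ref{sub:HF-def} (after \cite[Theorem 2.18]{Che20}), subsequential limits of $F^\delta$ produce a holomorphic function $f$ on the continuous rectangle $[-R,R]\times[0,R]$ solving the mixed Riemann--Hilbert boundary value problem encoded by the limiting boundary values of $H$. Conformally mapping the rectangle to the upper half-plane and sending the Dobrushin corners $a,b$ to two symmetric real points, one identifies $f$ explicitly as an elementary square-root type function with the expected inverse square-root singularities at $a,b$ and continuous non-vanishing boundary values elsewhere. In particular $|f|$ is bounded and bounded away from zero at the image of $z_f$; conformal rescaling then yields $|f(z_f)|\asymp R^{-1/2}$, which combined with the first paragraph gives simultaneously the upper and lower bounds of the lemma.

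The main obstacle will be ensuring that the subsequential limit $f$ actually solves the Riemann--Hilbert problem with the correct Dirichlet data for $H$ along the straight approximating arcs. Unlike in the isoradial analysis of \cite{ChSmi2}, the primitive $H$ is not known to be sub/super-harmonic for any simple local Laplacian on periodic grids, so one cannot directly compare its boundary growth to that of a discrete harmonic measure near the free arc. To bypass this I would employ the surgery technique on $s$-embeddings of \cite{Mah23, MahPHD}, attaching layers of specifically chosen boundary kites along the straight approximating arcs so as to augment the doubly-periodic grid with a clean straight-line boundary discretization, on which the boundary behaviour of Kadanoff--Ceva fermions can be controlled explicitly. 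The propagation equation \eqref{eq:3-terms} then transports this boundary control back to the original lattice, providing the uniform boundary regularity required for the continuum limit to pick up the correct Dirichlet values on the straight arcs and completing the proof.
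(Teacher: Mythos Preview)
Your overall setup is sound, but there is both a normalization slip and a real gap. The slip: you write $F^\delta:=\delta^{-1/2}F^{\mathrm{FK}}$ and then claim $H_{F^\delta}$ has boundary values $0,1$. Since $H$ is quadratic in $F$, this rescaling multiplies $H$ by $\delta^{-1}$; it is $H_{F^{\mathrm{FK}}}$ itself that already takes the values $0,1$, and (after normalizing $R=1$) it is $F^{\mathrm{FK}}$, not a rescaled version, that is precompact with $O(1)$ limits.

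The genuine gap is at the end of the argument. Even granting convergence $F^\delta\to f$ on compacts and correct identification of the Riemann--Hilbert limit, the face $f$ lies \emph{on} the free arc, so $z_f$ is a boundary point. Precompactness in the sense of \cite[Theorem~2.18]{Che20} gives nothing there, and the kite surgery you invoke controls the \emph{argument} of $F$ along \emph{wired} arcs (this is how the paper uses it later, in the $(m,\ell)$-corner analysis of Section~3.2), not the modulus along the free arc. You would still need a uniform boundary estimate near the free arc to pass from interior convergence to $|F^\delta(z_f)|\asymp |f(z_f)|$, and this is exactly the step that the lack of sub/super-harmonicity of $H$ on periodic grids makes non-trivial.

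The paper sidesteps both issues by treating the two bounds separately, with arguments that are softer than explicit identification of the limit. For the upper bound, it compares $H_{X^\delta}$ directly with $H_{\mathrm{comp}}$ for the \emph{constant} $s$-holomorphic function $F_{\mathrm{comp}}=2e^{-i\pi/4}$: the comparison principle of \cite[Proposition~2.11]{Che20} forces $H_{X^\delta}\ge H_{\mathrm{comp}}-O(\delta)$ everywhere, so near the free arc $H_{X^\delta}$ is trapped in an interval of width $O(\delta)$ and the gradient bound yields $|F^\delta|=O(1)$ there, hence $X^\delta(c)=O(\delta^{1/2})$. No convergence or surgery is needed. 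For the lower bound, it argues by contradiction: if $\delta^{-1/2}X^\delta(c)\to 0$ along a subsequence, then using that $\Re[F^\delta]$ is discrete-harmonic on the associated $S$-graph (with uniform crossing estimates) plus a weak Beurling bound, one shows the subsequential limit $f$ vanishes on a boundary segment and hence identically, contradicting the \emph{non-triviality} of the limit already established in \cite{Che20}. Thus neither bound requires identifying $f$ explicitly, and the kite construction is reserved for a different lemma further on.
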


\begin{proof}
We recommend that the reader follow the argument with the help of Figure \ref{fig:FK-proof}. Throughout the proof, we assume that the ratio $(R/\delta)$ is sufficiently large; otherwise, the same conclusion follows by invoking the finite-energy property of the FK model, at the cost of adjusting the constant $C$.  By rescaling all distances by $R$, we may assume without loss of generality that $R = 1$. We now introduce some notation. Let $X^\delta$ denote the FK observable in $\Lambda^\delta_{R}$, and let $F^\delta$ be the associated $s$-holomorphic function defined by \eqref{eq:F-from-X}. As recalled above, up to an appropriate choice of additive constant, one has 
\[
H_{X^\delta}(v^\circ)=0 \quad \text{for all } v^\circ \in (b^\delta a^\delta)^\circ,
\qquad 
H_{X^\delta}(v^\bullet)=1 \quad \text{for all } v^\bullet \in (a^\delta b^\delta)^\bullet.
\]

\textbf{Proof of the upper bound using the comparison principle} We are going to compare $H_{X^\delta}$ with the primitive of a constant $s$-holomorphic function.
Define the \emph{constant} $s$-holomorphic function $F_{\textrm{comp}}:=2e^{-i\frac{\pi}{4}}$ and denote by 
\begin{equation*}
	H_{\textrm{comp}}:=H[F_{\textrm{comp}}]= -4\int \Im[d\cS^\delta+ d\cQ^\delta] \, .
\end{equation*}
Up to a suitable choice of additive constant, we may assume (since $\cQ^\delta$ is periodic) that
\[
H_{\textrm{comp}}=1-O(\delta)
\quad\text{along the horizontal bottom boundary of }\Lambda_{R}^\delta,
\]
On the other hand, the specific choice of $F_{\textrm{comp}}$ and the periodicity of $\cS^\delta$ ensure that $H[F_{\textrm{comp}}]$ grows \emph{linearly at a negative speed} when traveling in the upward direction.  
In particular, for sufficiently large $O(\delta)$ depending only on the constants in \Unif, one has  
\[
H[F_{\textrm{comp}}](v)\leq -\frac{1}{2}
\quad \text{for any } v \text{ such that } 
\Im[v]\geq \Im[z_f] + 1 + O(\delta).
\]

We now compare the boundary values of $H_{\textrm{comp}}$ and $H_{X^\delta}$ along $\partial \Lambda_{1^\delta}$:

\begin{itemize}
	\item \textbf{Bottom free boundary:}  
	On the bottom free boundary of $\Lambda^\delta_{1}$, we have $H_{X^\delta}(v^\bullet) = 1$ and $H_{\textrm{comp}} = 1 + O(\delta)$.  
	Hence,
	\[
	H_{X^\delta} - H_{\textrm{comp}} \ge O(\delta)
	\quad \text{along the bottom free boundary of } \Lambda^\delta_{1}.
	\]
	
	\item \textbf{Vertical free arcs:}  
	On the vertical free arcs of $\Lambda^\delta_{1}$, we have $H_{X^\delta}(v^\bullet) = 1$ and $H_{\textrm{comp}} \le 1 + O(\delta)$.  
	Thus,
	\[
	H_{X^\delta} - H_{\textrm{comp}} \ge O(\delta)
	\quad \text{along the vertical free boundary arcs of } \Lambda^\delta_{1}.
	\]
	
	\item \textbf{Wired arc:}  
	On the wired arc of $\Lambda^\delta_{1}$, we have $H_{X^\delta}(v^\circ) = 0$ and $H_{\textrm{comp}} \le 0$.  
	Therefore,
	\[
	H_{X^\delta} - H_{\textrm{comp}} \ge 0
	\quad \text{along the wired boundary arc of } \Lambda^\delta_{1}.
	\]
\end{itemize}

Therefore, we can apply the \emph{comparison principle}, stated in \cite[Proposition~2.11]{Che20}, which guarantees that the inequality \(H_{X^\delta} - H_{\textrm{comp}} \ge O(\delta)\) along \(\partial \Lambda^\delta_{1}\) also holds in the interior, that is, \(H_{X^\delta} - H_{\textrm{comp}} \ge O(\delta)\) \emph{throughout} \(\Lambda^\delta_{1}\). A subtlety remains: for instance, if a vertex \(v^\circ\) lies directly adjacent to \((b^\delta a^\delta)^\bullet\), it may happen that 
\[
H_{X^\delta}(v^\circ) - H_{X^\delta}((b^\delta a^\delta)^\bullet) = -X(c_{(b^\delta a^\delta)^\bullet v^\circ})^2 \ll \delta.
\]
Nevertheless, repeating verbatim the proof of \cite[Proposition~2.11]{Che20} shows that along the free arc, such a vertex \(v^\circ\) \emph{cannot} be a local minimum of \(H_{X^\delta} - H_{\textrm{comp}}\), since the associated fermions branch there-unlike the vertices \(v^\bullet\) on the arc \((b^\delta a^\delta)^\bullet\). An analogous argument applies along the wired arc. Consequently, one indeed obtains that \(H_{X^\delta} - H_{\textrm{comp}} \ge O(\delta)\) \emph{everywhere inside} \(\Lambda^\delta_{1}\).

This argument suffices to establish the upper bound. To see this, fix a sufficiently large constant \(M > 0\) (its exact value will be determined shortly).  
There exists a universal constant \(C_0\), depending only on the parameters in \Unif, such that by \cite[Theorem~2.18]{Che20}, for any \(r \ge M\delta\) large enough and any \(z \in B(z, r/2)\),
\begin{equation}\label{eq:gradient-H}
	|F^\delta(z)|^2 \le C_0 \frac{\textrm{osc}_{B(z_f + ir, r)} H_{X^\delta}}{r},
\end{equation}
where $\textrm{osc}_{B}$ denotes the oscillation (the difference between the maximum and the minimum) of $H_{X^\delta}$ over the ball $B$.  

Taking \(r = 2M\delta\) with \(M\) large enough, there exist a constant $C_2=C_2(M)$, depending on $M$ and constants in \Unif\, such that for \(v \in B(z_f + 4iM\delta, M\delta)\) 
\begin{equation*}
	1 \ge H_{X^\delta}(v) \ge H_{\textrm{comp}}(v) - O(\delta) \ge 1 - C_2\delta,
\end{equation*}
since \(H_{\textrm{comp}}\) decreases linearly (with a slope bounded away from \(0\) and \(\infty\)) when moving upward from the bottom boundary of \(\Lambda_{1^\delta}\), where its value is \(1 + O(\delta)\).  Recalling that $H_{X^\delta}\leq 1$, this implies that  \(\textrm{osc}_{B(z_f + 4iM\delta, M\delta)} H_{X^\delta} \leq C_2 \cdot \delta\).  
Applying \eqref{eq:gradient-H} to this ball gives
\begin{equation}
	|F^\delta(z)|^2 \le C_0 \frac{C_2 \cdot \delta}{M\delta} = O(1),
\end{equation}
for some uniform constant \(O(1)\) depending only on parameters in \Unif.  
Hence, \(F^\delta(z)\) is \emph{uniformly bounded up at a distance} \(4M\delta\) from the boundary.  

Using the reconstruction formula \eqref{eq:F-from-X} and the fact that all angles are bounded away from \(0\) and \(\pi\), while all edge lengths are comparable to \(\delta\), we deduce that \(\delta^{-1/2} X^{\delta}\) is uniformly bounded at a distance \(4M\delta\) from the bottom boundary of \(\Lambda_{1^\delta}\).  
Since all coupling constants in $\cS^\delta$ are bounded away from \(0\) and \(\infty\) (see \eqref{eq:theta-from-S}), the finite-energy property and the fact that one only a bounded number of steps to reach the boundary of \(\Lambda_{1^\delta}\)) from \(B(z_f + 4iM\delta, M\delta)\), this ensures that \(\delta^{-1/2} X^\delta\) remains uniformly bounded \emph{up to the boundary}.  For a boundary corner \(c = ((a^\delta b^\delta)^\bullet, v^\circ)\) attached to the free arc, one has
\begin{equation*}
	X^{\delta}(c) = \mathbb{E}^{\circ,\bullet}_{\Lambda_{1}^{\delta}}[\sigma_{v^\circ}\sigma_{(b^\delta a^\delta)^\circ}] = \mathbb{P}^{\circ,\bullet}_{\Lambda_{1}^\delta}\big[\, f \overset{\bullet,\circ}{\longleftrightarrow} \partial \Lambda_{1}^\delta(f)\, \big] = O(\delta^{1/2}).
\end{equation*}
This concludes the proof of the upper bound.

\medskip
\textbf{Proof of the lower bound by contradiction}
Assume once again that $R=1$ (up to scaling the lattice) and there exist a sequence $\delta_n \to 0 $ such that 
\begin{equation}\label{eq:symmetry-convergence-to-0}
	|\mathbb{E}^{\circ,\bullet}_{\Lambda^{\delta_n}_{1}}[\sigma_{v^\circ}\sigma_{(b^\delta a^\delta)^\circ}]|\cdot  \delta_n^{-1/2} \longrightarrow 0
\end{equation}
as $n\to \infty$. Note that using Theorem \ref{thm:RSW-FK}, periodicity of the lattice the boundary of $\Lambda_1^{\delta_n}$ and the finite energy property, one can conclude that uniformly on faces $v^\circ=f'\in \partial \Lambda^{\delta_n}_{1} \cap \partial \mathbb{H}^\delta(f) \cap \{ |\Re[z_f-z_{f'}]|\leq \frac{1}{4}  \}$ one has as $\delta_n \to 0 $
\begin{equation}\label{eq:symmetry-convergence-to-0}
	|\mathbb{E}^{\circ,\bullet}_{\Lambda^{\delta_n}_{1}}[\sigma_{v^\circ}\sigma_{(b^\delta a^\delta)^\circ}]|\cdot  \delta_n^{-1/2} \longrightarrow 0.
\end{equation}
By the strong box crossing property (Theorem~\ref{thm:RSW-FK}), uniformly in $\delta$ there exists a universal constant (depending only on that theorem) such that the probability of a one-arm event in the half-plane from $f$ up to distance $1/8$ is uniformly comparable to the probability of an arm event in a discretised square whose bottom side lies on $\partial \mathbb{H}^\delta(f)$ and is centred at $f \in \partial \mathbb{H}^\delta(f)$, regardless of the boundary conditions imposed on the vertical sides and the top side. By periodicity of the lattice and the boundary discretization, this implies \eqref{eq:symmetry-convergence-to-0}. We claim that this already leads to a contradiction. The proof is organised in several steps below.
\begin{itemize}
	\item As recalled in Section~\ref{sub:HF-def}, the $s$-holomorphic functions $(F^{\delta_n})_{n\ge 1}$ are uniformly bounded on compact subsets of $\Lambda_1$ and admit a subsequential limit $f$ as $n \to \infty$. One of the main results of \cite{Che20} further guarantees that this limit exists (without passing to a subsequence) and that $f$ is a \emph{non-trivial} holomorphic function, which can in fact be identified with the solution to a certain Riemann--Hilbert boundary value problem.

	\item At the discrete level in our setting, the functions $(\Re[F^{\delta_n}])_{n\ge 0}$ are harmonic on the $S$-graphs $(\cS^{\delta_n} - i\cQ^{\delta_n})_{n\ge 0}$, for the Laplacian defined in \cite[Section~2.5]{Che20}. In particular, the associated random walks satisfy \emph{uniform crossing estimates above scale} $\delta$: for all $r \ge \delta$, the probability that a random walk makes a complete turn within the annulus of radii $r$ and $2r$ is uniformly bounded from below. 

	Consider the point $z_f + is$ for some macroscopic (but sufficiently small) $s > 0$. Recall that, as $n \to \infty$, $(\Re[F^{\delta_n}])_{n\ge 0}$ vanishes near the middle of the arc $\Lambda^{\delta_n}_{1} \cap \partial \mathbb{H}^\delta(f)$, restricted to $\{\, |\Re[z_f - z_{f'}]| \le 1/4 \,\}$. As $s \to 0$, one can use weak Beurling estimates (see \cite[Proposition~2.11]{ChSmi1}) to reconstruct $\Re[F^{\delta_n}(z_f + is)]$ from the boundary values of $\Re[F^{\delta_n}(z)]$ (which vanish in the limit) and an error term that decays polynomially in $s$. More precisely (see, e.g., \cite[Step~2 in Section~4]{Mah23} or \cite[Theorem~3.12]{ChSmi1}), there exist universal constants $\beta > 0$ and $O$, depending only on the parameters in \Unif, such that
	\begin{equation*}
		\Re[F^{\delta_n}(z_f + is)] = o_{\delta_n \to 0}(1) + O(s^\beta).
	\end{equation*}
	A similar argument shows that, uniformly in $\delta$ and $s$,
	\begin{equation*}
		\Re[F^{\delta_n}(z_{f'} + is)] = o_{\delta_n \to 0}(1) + O(s^\beta),
	\end{equation*}
	for any face $f' \in \partial \Lambda^{\delta_n}_{1} \cap \partial \mathbb{H}^\delta(f)$ satisfying $|\Re[z_f - z_{f'}]| \le 1/8$.

	\item We can now conclude. Sending first $\delta_n \to 0$ and then $s \to 0$, we obtain that $\Re[f]$ vanishes along the boundary segment $z_f + [-1/8, 1/8]$. By the same reasoning, $\Im[f]$ also vanishes along this segment. Hence, $f$ vanishes on a nontrivial boundary interval of $\Lambda_{1}$, and therefore $f \equiv 0$ throughout the domain, contradicting the fact that $f$ is a nontrivial holomorphic function. This contradiction completes the proof.
\end{itemize}
\end{proof}

\begin{figure}
\begin{minipage}{0.49\textwidth}
\includegraphics[clip, width=1\textwidth]{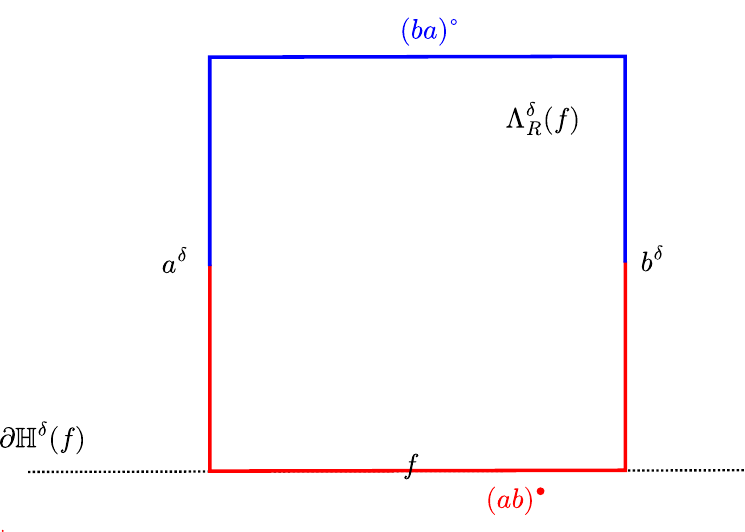}
\end{minipage}\begin{minipage}{0.49\textwidth}
\includegraphics[clip, width=1.1\textwidth]{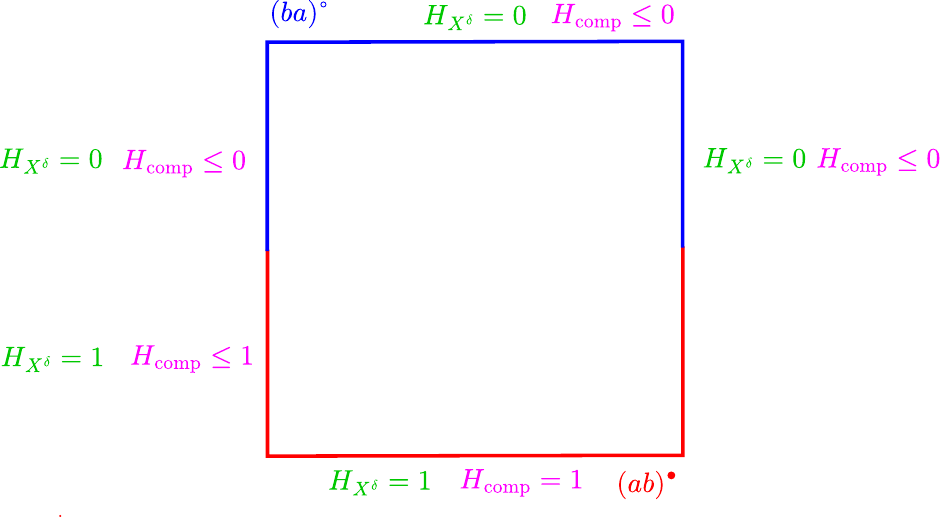}
\end{minipage}
\caption{Left: The wired arc $(b^\delta a^\delta)^\circ$ is drawn in blue while the free arc $(a^\delta b^\delta)^\bullet$ is drawn in red. Right: Boundary values of $H_{X^\delta}$ and $H_{\textrm{comp}}$ used for the comparison principle.}
\label{fig:FK-proof}
\end{figure}

\begin{proof}[Proof of Proposition \ref{prop:one-arm-half-plane}]
It suffices to adapt the ideas underlying the mixing property recalled in Section~\ref{sub:basic-properties-FK}. To simplify the exposition, we present the proof of the upper bound through a sequence of diagrams, where each diagram represents an event whose probability increases (up to a uniform multiplicative constant) when moving from one configuration to the next. In the illustrations, primal edge clusters are shown in blue, while dual clusters are shown in red.
\begin{figure}
\begin{minipage}{0.32\textwidth}
\includegraphics[clip, width=0.8\textwidth]{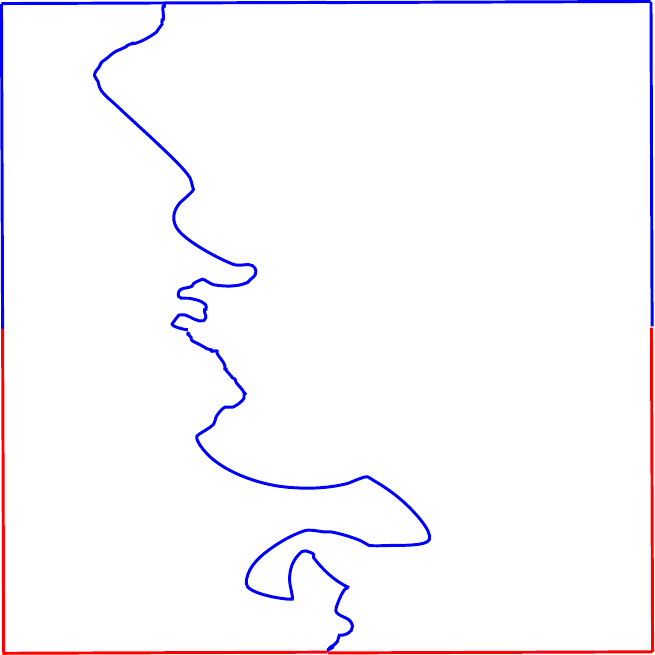}
\end{minipage}\hskip 0.02\textwidth \begin{minipage}{0.33\textwidth}
\includegraphics[clip, width=0.8\textwidth]{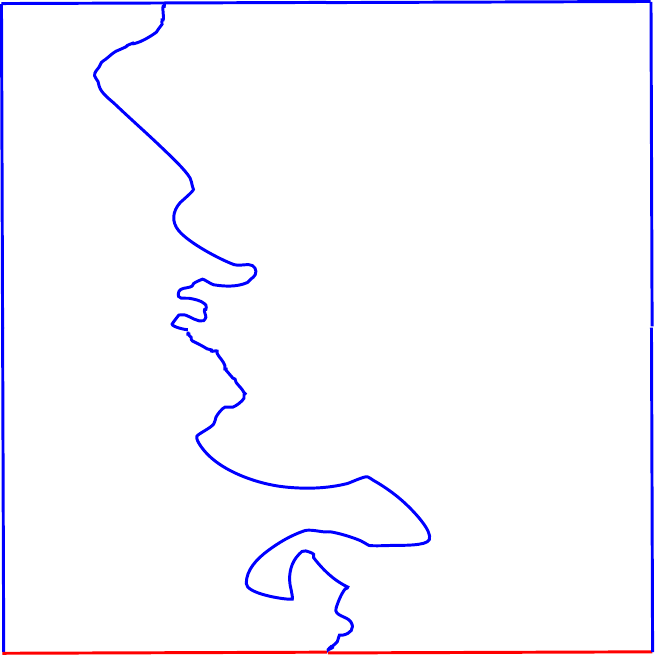}
\end{minipage}
\begin{minipage}{0.32\textwidth}
\includegraphics[clip, width=0.8\textwidth]{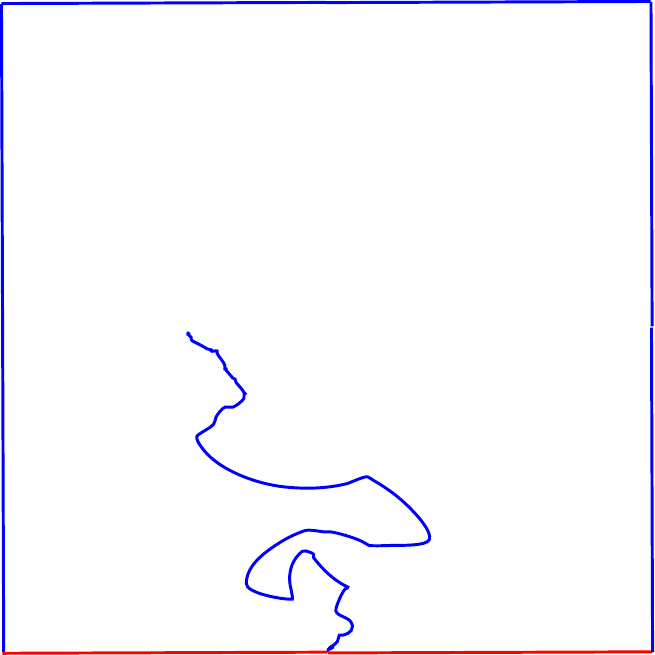}
\end{minipage}\hskip 0.02\textwidth \begin{minipage}{0.33\textwidth}
\includegraphics[clip, width=0.8\textwidth]{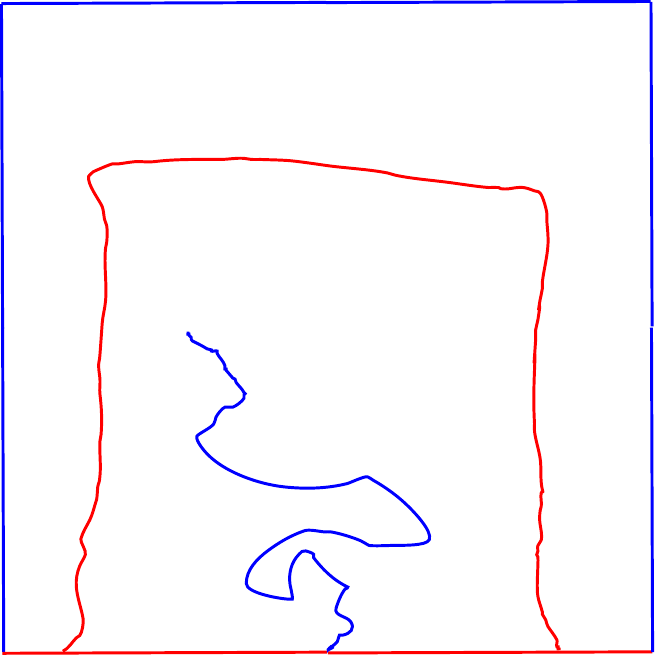}
\end{minipage}
\begin{minipage}{0.32\textwidth}
\includegraphics[clip, width=\textwidth]{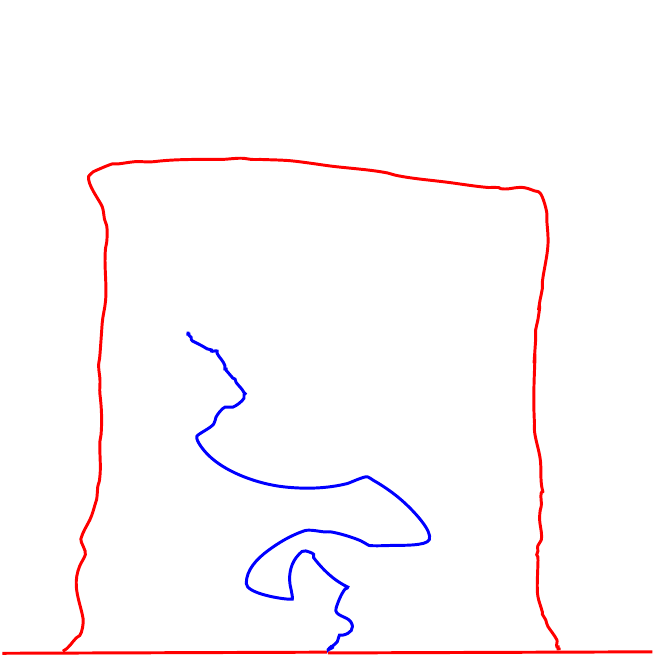}
\end{minipage}\hskip 0.02\textwidth \begin{minipage}{0.32\textwidth}
\includegraphics[clip, width=0.8\textwidth]{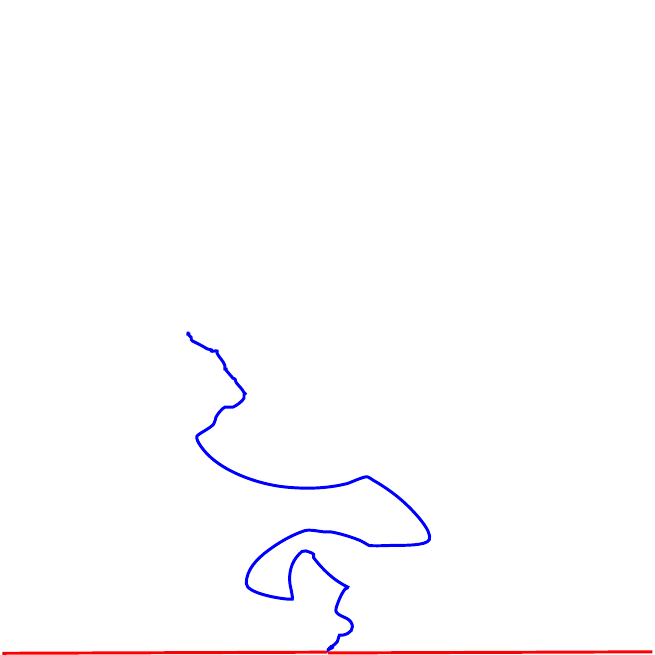}
\end{minipage}
\caption{First line: (Left) Arm event up the boundary of $\Lambda^\delta_R $ with alternating wired boundary conditions on top half of the square and free boundary conditions on the bottom half of the square. (Middle) Arm event up the boundary of $\Lambda^\delta_R $ with alternating free boundary conditions on the bottom segment and wired boundary conditions on the rest of $\partial \Lambda^\delta_R$. (Right) Arm event up the boundary of $\Lambda^\delta_{R/2} $ with alternating free boundary conditions on the bottom segment and wired boundary conditions on the rest of $\partial \Lambda^\delta_R$. Second line: (Left) Arm event up the boundary of $\Lambda^\delta_{R/2} $ with alternating free boundary conditions on the bottom segment and wired boundary conditions on the rest of $\partial \Lambda^\delta_R$, imposing additionally a dual wired circuit separating $\Lambda^\delta_{R/2}$ and $\Lambda^\delta_R$. (Middle) Arm event up the boundary of $\Lambda^\delta_{R/2} $ with an additional a dual wired circuit separating $\Lambda^\delta_{R/2}$ and $\Lambda^\delta_R$. (Right) Arm event up the boundary of $\Lambda^\delta_{R/2} $ in the half-plane with free boundary conditions.}
\end{figure}.
All the events are increasing order (using monotonicity of boundary conditions), except when passing from the third to the fourth configuration. Still, the existence of an open circuit is a decreasing event while the arm event is an increasing event. One can apply the FKG inequality (between an increasing and a decreasing event) and use the fact that the probability to have a dual separating circuit between $\Lambda^\delta_{R/2}$ and $\Lambda^\delta{R}$ is bounded away from $0$ and $1$. Hence, the probabilities increase (up to constant) for this sequence of events. The proof of the lower bound can be done with similar techniques.
\end{proof}

\subsection{From the half-plane one-arm exponent to Theorem \ref{thm:SuperStrongRSW}}
In this section, we show that the precise computation of the one-arm exponent in the half-plane is sufficient to deduce Theorem~\ref{thm:SuperStrongRSW}, following the approach of \cite{DMT21}. We emphasize that without the explicit knowledge of this exponent, the same renormalisation argument cannot be carried out: the proof in \cite{DMT21} relies on estimates involving both primal and dual arm exponents. For the critical square lattice, self-duality compensates for this limitation, while in our setting, the exact value of the one-arm exponent allows us to reach the same conclusion.

We work on the original periodic grid $\cS$, assuming that all edges have length comparable to $1$. Compared to the square-lattice case of \cite{DMT21}, controlling the boundary argument of fermionic observables along the horizontal and vertical directions of wired arcs is considerably more delicate. To overcome this, we introduce an explicit \emph{extension} procedure of the domain using kites attached along the boundary. This construction allows us to fix the boundary argument of fermionic observables in a slightly enlarged domain, which suffices for the proof. We focus here on describing this extension procedure (conceptually related to \cite{Mah23}) and on the adaptations required in the steps of \cite{DMT21}; all other ingredients are quoted directly from that work. For clarity, we retain the same notation as in \cite{DMT21}, modifying it only as needed to accommodate the case of critical doubly-periodic graphs.

Fix a face type $f_0 \in \mathcal{T}$ and assume that the origin corresponds to the center of a face $z_{f_0}$. We identify the translates $\mathcal{G}_{(i,j)}$ of the fundamental domain $\mathcal{G}$ of $\cS$ with the integer lattice $\mathbb{Z}^2$. Define
\begin{equation*}
	\Lambda_n := \bigcup_{|i|, |j| \le n} \mathcal{G}_{(i,j)},
\end{equation*}
the union of $n^2$ fundamental domains centered at $z_{f_0}$. For $x = (i,j) \in \mathbb{Z}^2$, denote by $\Lambda_n(x)$ the translate of $\Lambda_n$ centered at the face of $\mathcal{G}_{(i,j)}$ having the same type as $f_0$. For $B := \Lambda_r(x)$, let $\overline{B}$ be the twice larger box with the same center.  

Let $\mathcal{D}$ denote the annular region between $B$ and $\overline{B}$. Then, by Theorem~\ref{thm:RSW-FK}, there exists $c_0 > 0$ such that
\[
\phi^{0}_{\mathcal{D}}\big[\text{there exists a circuit of open edges in } \mathcal{D}\big] > c_0.
\]
Informally, in the present setting, we replace the notion of vertices by that of fundamental domains. We say that $\mathcal{D}$ is \emph{$R$-centred} if it contains $\Lambda_{2R}$ but not $\Lambda_{3R}$. The next proposition, adapted from \cite{DMT21}, is one of the main ingredients in the proof of Theorem~\ref{thm:SuperStrongRSW}.
\begin{prop}[Proposition 1.3 in \cite{DMT21}]
	There exist $c>0$ such that for nay $R>1$ and any $R$-centred domain one has
	\begin{equation*}
		\phi^{0}_{\mathcal{D}}[\Lambda_{R} \overset{\Lambda_{9R}}{\longleftrightarrow} \partial \mathcal{D}]\geq c
	\end{equation*}
\end{prop}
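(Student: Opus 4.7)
The plan is to adapt the renormalisation scheme of Proposition~1.3 in \cite{DMT21} to the doubly-periodic setting, using Proposition~\ref{prop:one-arm-half-plane} as the key new input replacing the self-duality-based identification of the half-plane one-arm exponent on the square lattice. Since $\mathcal{D}$ is $R$-centred, the boundary $\partial\mathcal{D}$ must cross the annular region $\Lambda_{3R}\setminus\Lambda_{2R}$ at least at one face $y_0$, so the event to lower-bound essentially asks for a primal cluster to travel from $\Lambda_R$ to the vicinity of $y_0$ inside $\Lambda_{9R}$ and then touch $\partial\mathcal{D}$ locally near $y_0$.

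More concretely, I would decompose the target event into a chain of annular crossings bridging $\Lambda_R$ to a small box $B(y_0,\alpha R)$ (for some small universal $\alpha$), and then estimate the probability that the resulting primal cluster actually meets $\partial\mathcal{D}$. The bridging part is handled by a standard iterated use of RSW (Theorem~\ref{thm:RSW-FK}) with $O(1)$ iterations, combined with FKG and the mixing property recalled in Section~\ref{sub:basic-properties-FK}, which yields a uniform lower bound independent of the geometry of $\mathcal{D}$. The local connection to $\partial\mathcal{D}$ near $y_0$ is the subtle point: one compares the local behaviour of $\partial\mathcal{D}$ to a straight boundary arc via the surgery construction of \cite{Mah23,MahPHD}, attaching a layer of boundary kites so that the associated fermionic observable acquires a well-defined boundary argument, and then applies Proposition~\ref{prop:one-arm-half-plane} together with its Kramers--Wannier dual version (valid at criticality where the FK weights match their duals) to lower-bound the local connection probability. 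A companion argument, using the same ingredients dually, is used to rule out the complementary event in which a dual cluster surrounds $\Lambda_R$ and disconnects it from $\partial\mathcal{D}$; this is exactly the point where the precise value $1/2$ of both the primal and dual half-plane one-arm exponents is needed, rather than merely an upper bound.

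The main obstacle, as already stressed in the paper, is the absence of the $\pi/2$-rotation invariance and exact self-duality that \cite{DMT21} uses to treat arbitrary $R$-centred boundary geometries uniformly. The surgery construction on $s$-embeddings plays the role of these symmetry arguments in the periodic case: each boundary arc of $\mathcal{D}$ near a defect face is extended into a canonical periodic half-plane configuration, reducing the required estimate to the setting of Proposition~\ref{prop:one-arm-half-plane}. Once this reduction is in place, the bootstrap of \cite{DMT21} can be implemented essentially line by line and delivers the uniform constant $c>0$.
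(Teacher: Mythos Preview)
Your proposal misidentifies the structure of the argument and the direct construction you outline has a genuine gap.

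In the paper (as in \cite{DMT21}), this proposition is not obtained by a direct geometric construction of a connection. The statement is first rephrased as $\phi^0_{\mathcal{D}}[\mathbf{M}_0(\mathcal{D},R)\ge 1]\ge c$, and the work goes into the \emph{first-moment} estimate (Proposition~\ref{prop:1.8-DMT} with $r=0$): $\phi^0_{\mathcal{D}}[\mathbf{M}_0(\mathcal{D},R)]\gtrsim R\,\pi_1^+(R)\asymp R^{1/2}$. The half-plane one-arm exponent enters only at this stage, through Lemma~\ref{lem:lower-bound-crossing}, where the contour integral of the FK observable on the kite-extended $(m,\ell)$-corner domain is analysed and the precise value $\pi_1^+(R)\asymp R^{-1/2}$ is used to make the horizontal-arc contribution $\asymp m\cdot m^{-1/2}=m^{1/2}$ dominate the error terms coming from the other boundary pieces. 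The passage from the first-moment bound to the positive-probability statement is then the renormalisation/bootstrap of \cite{DMT21}, which the paper quotes verbatim after checking that its inputs (RSW, mixing, arm separation) are available.

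Your direct approach cannot produce a uniform constant. After RSW-bridging $\Lambda_R$ to $B(y_0,\alpha R)$, you still need the primal cluster to touch $\partial\mathcal{D}$ under \emph{free} boundary conditions. Even if $\partial\mathcal{D}$ were locally flat near $y_0$, the probability that a fixed boundary face is reached from distance of order $\alpha R$ is $\asymp(\alpha R)^{-1/2}$ by Proposition~\ref{prop:one-arm-half-plane}, which vanishes as $R\to\infty$; summing over the $\asymp\alpha R$ nearby boundary faces merely reproduces a first-moment statement and does not give $\mathbb{P}[\cdot]\ge c$. This is exactly the obstruction that forces the renormalisation in \cite{DMT21}. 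In addition, the surgery construction cannot ``straighten'' an arbitrary $\partial\mathcal{D}$: it is a device for \emph{building} specific $(m,\ell)$-corner domains with attached kite layers so that the boundary argument of the observable is controlled, not a comparison tool applicable to a given rough boundary. Finally, the role you assign to the dual exponent (ruling out a dual circuit around $\Lambda_R$) is not the mechanism; neither the paper nor \cite{DMT21} argues this way.
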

We now introduce additional notation. For $r \ge 0$, an \emph{$r$-box} is defined as a translate of $\Lambda_r$ by a vector $x = (i, j)$ in $(1 \vee r)\mathbb{Z}^2$. In particular, $0$-boxes correspond to fundamental domains.  Given $R \ge 1$ and an $R$-centred domain $\mathcal{D}$, let $\mathbf{M}_r(\mathcal{D}, R)$ denote the number of $r$-boxes intersecting $\partial \mathcal{D}$. In particular, $\mathbf{M}_0(\mathcal{D}, R)$ counts the number of fundamental domains intersecting $\partial \mathcal{D}$ that are connected to $\Lambda_R$ within $\mathcal{D} \cap \Lambda_{7R}$. The goal is to prove the existence of a constant $c > 0$ such that
\begin{equation*}
	\phi^{0}_{\mathcal{D}}\big[\mathbf{M}_0(\mathcal{D}, R) \ge 1\big] \ge c.
\end{equation*}

In the present analysis (as compared to \cite{DMT21}), vertices are replaced by fundamental domains. Nevertheless, by the finite energy property, this substitution does not affect the reasoning or the estimates (up to universal constants). Indeed, one can always enforce that all edges within a given fundamental domain are either open or closed, loosing only a multiplicative factor depending on the constants in \Unif.

Following \cite[(1.5)]{DMT21}, a key step is to establish a lower bound on the first moment of $\mathbf{M}_r(\mathcal{D}, R)$. For this purpose, define
\begin{equation}
	M(r, R) := \inf\big\{\phi^{0}_{\mathcal{D}}[\mathbf{M}_0(\mathcal{D}, R)] : \mathcal{D} \text{ is } R\text{-centred} \big\}.
\end{equation}
The following result provides a uniform (though non-sharp) lower bound on this first moment.

\begin{prop}[Proposition~1.4 in \cite{DMT21}]\label{prop:1.4-DMT}
	There exists a constant $c_1 > 0$ such that for every $R \ge r \ge 1$,
	\begin{equation*}
		M(r, R) \ge c_1 (R/r)^{c_1}.
	\end{equation*}
\end{prop}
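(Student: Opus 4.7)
The plan is to establish the stronger bound $M(r,R) \gtrsim (R/r)^{1/2}$ via a first moment computation, relying crucially on the sharp half-plane one-arm exponent obtained in Proposition \ref{prop:one-arm-half-plane}. The polynomial lower bound stated in the proposition then follows by choosing $c_1 > 0$ small enough. This strategy is only available because the exponent $1/2$ has now been identified exactly; in the more general setting of \cite{DMT21} a multiscale renormalisation is required instead.

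I would start by fixing an arbitrary $R$-centred domain $\mathcal{D}$ and writing
\begin{equation*}
\phi^0_{\mathcal{D}}\bigl[\mathbf{M}_r(\mathcal{D},R)\bigr] \;=\; \sum_{B} \phi^0_{\mathcal{D}}\bigl[B \overset{\mathcal{D} \cap \Lambda_{7R}}{\longleftrightarrow} \Lambda_R\bigr],
\end{equation*}
the sum running over $r$-boxes $B$ intersecting $\partial \mathcal{D} \cap \Lambda_{7R}$. A topological argument shows that $\partial \mathcal{D}$ must separate $\Lambda_{2R}$ from the complement of $\Lambda_{3R}$ and hence has one-dimensional length of order at least $R$ inside $\Lambda_{7R}$, so it is intersected by at least of order $R/r$ disjoint $r$-boxes. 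For each such box $B$ I would then prove the local lower bound
\begin{equation*}
\phi^0_{\mathcal{D}}\bigl[B \overset{\mathcal{D} \cap \Lambda_{7R}}{\longleftrightarrow} \Lambda_R\bigr] \;\gtrsim\; (r/R)^{1/2},
\end{equation*}
by combining three ingredients: a local surgery near $B$, using the finite energy property together with Theorem \ref{thm:RSW-FK}, to reduce to a half-plane-like configuration; quasi-multiplicativity of FK arm probabilities, which is standard from RSW and the mixing property recalled in Section \ref{sub:basic-properties-FK}, and which decomposes the scale-$r$-to-scale-$R$ one-arm event into two single-scale half-plane inputs; and finally an application of Proposition \ref{prop:one-arm-half-plane} at scales $r$ and $R$. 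Summing over boxes yields $\phi^0_{\mathcal{D}}[\mathbf{M}_r] \gtrsim (R/r)\cdot(r/R)^{1/2} = (R/r)^{1/2}$, and taking the infimum over $R$-centred domains concludes the proof.

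The main obstacle is the local lower bound on $\phi^0_{\mathcal{D}}[B \leftrightarrow \Lambda_R]$ uniformly over all $R$-centred $\mathcal{D}$. Since $\partial \mathcal{D}$ can be highly irregular, one must argue that the connection probability from an arbitrary boundary $r$-box cannot be asymptotically smaller than the corresponding half-plane probability. Morally, a more irregular boundary only increases connection probabilities by providing additional arcs; rigorously, one has to glue a ``model'' half-plane configuration near $B$ and control the interaction with the genuine free boundary conditions of $\mathcal{D}$ through RSW and the mixing property, in the spirit of the surgery techniques invoked elsewhere in the paper. Once this reduction is carried out, the remaining pieces are all classical consequences of the inputs already assembled.
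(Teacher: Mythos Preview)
Your approach has a genuine gap at the step you yourself flag as the main obstacle, and the intuition you offer there is in fact backwards.

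You claim that for every $r$-box $B$ meeting $\partial\mathcal{D}$ one has $\phi^0_{\mathcal{D}}[B \leftrightarrow \Lambda_R]\gtrsim (r/R)^{1/2}$, and you justify this by saying that ``a more irregular boundary only increases connection probabilities''. This is the wrong monotonicity. Under \emph{free} boundary conditions, cutting more boundary into the domain \emph{decreases} the probability of increasing events: if $\mathcal{D}'\subset\mathcal{D}$ then $\phi^0_{\mathcal{D}'}\le\phi^0_{\mathcal{D}}$ stochastically. The half-plane is therefore essentially the \emph{best} local geometry for a boundary box, not the worst, and Proposition~\ref{prop:one-arm-half-plane} only furnishes an \emph{upper} bound for a generic boundary box, not a lower one. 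Concretely, an $R$-centred domain can have $\partial\mathcal{D}$ winding into long thin fjords; a box $B$ lying deep in such a fjord must send a primal arm through many corridors of width $O(r)$ and length $O(R)$ flanked by free boundary on both sides, and Theorem~\ref{thm:RSW-FK} applied to the dual shows this probability is exponentially small in $R/r$, not polynomial. So no uniform per-box lower bound of the type you need can hold, and the first-moment sum collapses.

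This obstruction is exactly why neither the paper nor \cite{DMT21} argues box by box. The paper simply quotes Proposition~\ref{prop:1.4-DMT} from \cite{DMT21}, where it is obtained by a multiscale renormalisation using only the weak RSW input. The role of the exact exponent $\tfrac12$ in this paper is different: it enters in the proof of the sharper Proposition~\ref{prop:1.8-DMT} (for $r=0$), and there the mechanism is not a per-box estimate but the vanishing contour integral of the FK observable over a carefully constructed \emph{corner} domain (Lemma~\ref{lem:lower-bound-crossing}). That argument lower-bounds the \emph{sum} $\sum_{v^\circ\in\gamma}\phi^{0/1}[v^\circ\leftrightarrow(ab)^\circ]$ globally, precisely because individual terms cannot be controlled. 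If you want to bypass the renormalisation, the right replacement is that contour-integral route, not a direct comparison of each box to a half-plane.
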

In fact, a stronger version of this estimate holds, involving one-arm exponents in the half-plane evaluated at different scales. For simplicity, set
\begin{equation*}
	\pi_{1}^{+}(R) := \phi^{0}\big[ f_0 \overset{\mathbb{H}(f_0)}{\longleftrightarrow} \partial \Lambda_{R}(f_0) \big].
\end{equation*}
This leads to the main proposition used in the multi-scale analysis.
\begin{prop}[Proposition~1.8 in \cite{DMT21}]\label{prop:M(r,R)}\label{prop:1.8-DMT}
	There exists a constant $c_4 > 0$ such that for every $R \ge r \ge 0$,
	\begin{equation*}
		M(r, R) \ge c_4 \frac{R\,\pi_{1}^{+}(R)}{1 \vee r\,\pi_{1}^{+}(r)}.
	\end{equation*}
\end{prop}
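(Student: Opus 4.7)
The plan is to lower bound $\phi_{\mathcal{D}}^{0}[\mathbf{M}_r(\mathcal{D},R)]$ directly by linearity of expectation over a collection of essentially disjoint $r$-boxes close to $\partial\mathcal{D}$. The key analytic input is a quasi-multiplicativity property for the half-plane one-arm probability in the periodic setting,
\begin{equation*}
\pi_1^+(R)\;\asymp\;\pi_1^+(r)\,\pi_1^+(r,R),
\end{equation*}
where $\pi_1^+(r,R)$ denotes the probability of a half-plane arm between scales $r$ and $R$. Together with Proposition~\ref{prop:one-arm-half-plane}, this yields $\pi_1^+(r,R)\asymp \pi_1^+(R)/\pi_1^+(r)$, which is exactly the per-box bound we need.

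For the counting step, I would exploit the defining property of an $R$-centred domain: $\partial\mathcal{D}$ lies in $\Lambda_{3R}\setminus \Lambda_{2R}$ and has length of order $R$. One can thus extract at least $cR/r$ essentially disjoint $r$-boxes $(B_i)_i$ intersecting $\partial\mathcal{D}$, whose doubled versions $\overline{B_i}$ remain disjoint and contained in $\Lambda_{7R}$. Writing $\mathbf{1}_i$ for the indicator that some fundamental domain of $B_i\cap\partial\mathcal{D}$ is connected to $\Lambda_R$ within $\mathcal{D}\cap\Lambda_{7R}$, one has $\mathbf{M}_r(\mathcal{D},R)\ge \sum_i \mathbf{1}_i$, and the proposition will follow once we show
\begin{equation*}
\phi^{0}_{\mathcal{D}}[\mathbf{1}_i=1]\;\ge\; c\,\frac{\pi_1^+(R)}{\pi_1^+(r)}
\end{equation*}
uniformly in $i$, then summing the $\asymp R/r$ contributions. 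The factor $1\vee r\pi_1^+(r)$ in the claimed bound accommodates the degenerate range $r=O(1)$, where $\pi_1^+(r)\asymp 1$ and the estimate reduces to $M(r,R)\ge cR\pi_1^+(R)$.

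To establish the uniform lower bound on $\phi^{0}_{\mathcal{D}}[\mathbf{1}_i=1]$, the connection is built from two pieces glued by Theorem~\ref{thm:RSW-FK} and the finite-energy property: a mesoscopic arm from $\partial\mathcal{D}\cap B_i$ to a circuit around $\overline{B_i}$, and a macroscopic arm from that circuit to $\Lambda_R$ inside $\mathcal{D}\cap\Lambda_{7R}$. Both pieces are compared, via the mixing property of Section~\ref{sub:basic-properties-FK} and the boundary-kite extension procedure introduced above for controlling the argument of fermionic observables, with genuine half-plane arm events at the appropriate scales, of probabilities $\pi_1^+(r)$ and $\pi_1^+(r,R)$ respectively. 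FKG together with quasi-multiplicativity then yield the desired lower bound.

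The main obstacle is precisely the quasi-multiplicativity of $\pi_1^+$ on a generic periodic $s$-embedding, together with the matching of an arbitrary boundary portion to a flat one at scale $r$. In \cite{DMT21} these steps rely on self-duality and $\pi/2$-rotation invariance of the square lattice, neither of which is available here. The plan is to bypass these symmetries in the same way as in the identification of the half-plane one-arm exponent: attaching layers of boundary kites in the spirit of \cite{Mah23, MahPHD} provides a flat straightening of any portion of $\partial\mathcal{D}$ at scale $r$, after which standard arm-separation and gluing arguments carry over inside the enlarged domain, with the finite-energy property bridging back to the original $s$-embedding.
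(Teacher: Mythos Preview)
Your approach has a genuine gap at the per-box step. You claim that for each $r$-box $B_i$ meeting $\partial\mathcal{D}$ one can prove
\[
\phi^{0}_{\mathcal{D}}[\mathbf{1}_i=1]\ \ge\ c\,\frac{\pi_1^+(R)}{\pi_1^+(r)}
\]
by building ``a macroscopic arm from that circuit to $\Lambda_R$ inside $\mathcal{D}\cap\Lambda_{7R}$'' and comparing it with a half-plane arm event $\pi_1^+(r,R)$. But $\mathcal{D}$ is an \emph{arbitrary} $R$-centred domain: between a given $B_i$ and $\Lambda_R$ the region $\mathcal{D}\cap\Lambda_{7R}$ may consist of long narrow corridors with free boundary, in which case the connection probability is exponentially small in the aspect ratio, not comparable to any half-plane quantity. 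No amount of local kite-straightening fixes this, since the obstruction is global. In fact, a uniform per-box lower bound of this type would immediately imply Proposition~1.3 and is at least as hard as the first-moment estimate you are trying to prove; this is precisely why \cite{DMT21} work with moments rather than individual boxes. A secondary issue: your geometric description ``$\partial\mathcal{D}$ lies in $\Lambda_{3R}\setminus\Lambda_{2R}$ and has length of order $R$'' is not part of the definition of $R$-centred; one only knows $\Lambda_{2R}\subset\mathcal{D}$ and $\Lambda_{3R}\not\subset\mathcal{D}$, so $\partial\mathcal{D}$ may be arbitrarily long and extend far outside $\Lambda_{3R}$.

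The paper proceeds very differently. One first reduces to $r=0$ (as in \cite[Section~5.1]{DMT21}, using only RSW and the exact half-plane exponent). For $r=0$, one does \emph{not} try to bound individual boundary vertices. Instead one locates a boundary point of $\mathcal{D}$ close to the origin, builds from it an $(m,\ell)$-corner Dobrushin domain $\Omega\subset\mathcal{D}$ with a kite-extended straight wired arc, and invokes Lemma~\ref{lem:lower-bound-crossing}. That lemma uses the vanishing of the boundary contour integral of the FK observable to produce a lower bound on the \emph{sum} $\sum_{v^\circ\in\gamma}\phi_\Omega^{0/1}[v^\circ\leftrightarrow(ab)^\circ]$ directly, of order $m^{1/2}\asymp R\,\pi_1^+(R)$. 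Splitting the free arc $\gamma$ into pieces $S_1,\dots,S_5$ and showing that the contributions of $S_3,S_4,S_5$ are negligible leaves a sum over $S_1\cup S_2\subset\partial\mathcal{D}\cup\tau(\partial\mathcal{D})$, which is then converted into the desired $\phi_\mathcal{D}^0$-expectation by RSW gluing. The global contour identity is what replaces the per-box estimate you are missing.
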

Inserting the exact value of the one-arm exponent in the half-plane obtained in the previous section allows one to deduce Proposition~\ref{prop:1.8-DMT} directly from Proposition~\ref{prop:1.4-DMT}. Moreover, the argument in \cite[Section~5.1]{DMT21} shows that combining Theorem~\ref{thm:RSW-FK} with this exponent reduces the analysis to the case $r = 0$.  
This case will be handled through a key observation: the boundary contour integral of the discrete FK observable \emph{almost} vanishes, while its boundary values precisely encode the primal and dual correlations.  

On periodic graphs, however, constructing suitable discretisations of the half-plane is subtler. In particular, we will instead work with an \emph{extension} of the original domain, obtained by adding an extra layer of boundary kites. In this extended domain, the boundary argument of the associated FK observable becomes tractable.  We now introduce some additional notation. For $\ell, m \ge 0$, we call $(\Omega, a, b)$ an \emph{$(m,\ell)$-corner Dobrushin domain} if its boundary consists of:
\begin{itemize}
	\item an approximation of the horizontal segment between $(0,0)$ and $b := (0,m)$ along $\partial \mathbb{H}$;
	\item an approximation of the vertical segment between $(0,0)$ and $a := (\ell,0)$ along $\partial i\mathbb{H}$;
	\item a self-avoiding curve $\gamma$ from $a$ to $b$, avoiding the two segments above and oriented clockwise around $0$.
\end{itemize}

Note that the notion of a \emph{corner Dobrushin domain} here is unrelated to the \emph{Kadanoff--Ceva corners} appearing in the fermionic formalism, which we will explicitly refer to as such below.  We now state the following lemma, analogous to \cite[Lemma~5.3]{DMT21}, which plays a key role in the renormalisation argument of \cite{DMT21}. 

\begin{lemma}\label{lem:lower-bound-crossing}
	There exists $c_1 > 0$ such that for every $(m,\ell)$-corner Dobrushin domain $\Omega$ with $m \ge \ell$, one has
	\begin{equation}
		\sum_{v^\circ \in \gamma} \phi_{\Omega}^{0/1}\big[v^\circ \longleftrightarrow (ab)^\circ\big] \ge c_1\, m^{\frac{1}{2}}.
	\end{equation}
\end{lemma}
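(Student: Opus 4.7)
The strategy is to derive the lemma directly from the two-sided estimate on the half-plane one-arm exponent given by Proposition~\ref{prop:one-arm-half-plane}. Combined with a local half-plane comparison and an elementary geometric count of the vertices on $\gamma$, this will be enough to produce the $m^{1/2}$ lower bound without invoking any further contour identity.

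I would first rephrase the probabilities in the language of Section~\ref{sub:notation}. Via the Edwards--Sokal coupling, for any corner $c$ attached to a face $v^\circ\in\gamma$ one has $\phi^{0/1}_\Omega[v^\circ\leftrightarrow(ab)^\circ]=|X^{FK}_\Omega(c)|$, where $X^{FK}_\Omega$ is the FK observable defined in Section~\ref{sub:notation}. Although not strictly necessary, this rewriting aligns the argument with the discrete complex-analytic framework already used in Proposition~\ref{prop:one-arm-half-plane} and clarifies the role of the boundary behaviour of the associated $s$-holomorphic function along the two straight free arcs.

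The key technical step is the pointwise lower bound
\begin{equation}\label{eq:one-arm-comparison}
	\phi^{0/1}_\Omega[v^\circ\leftrightarrow(ab)^\circ]\ \geq\ c\,(1+r(v^\circ))^{-1/2},
\end{equation}
where $r(v^\circ):=\mathrm{dist}(v^\circ,(ab)^\circ)$ and $c>0$ depends only on the constants in \Unif. To prove \eqref{eq:one-arm-comparison} I would apply the FKG inequality, positive association and monotonicity in boundary conditions to decompose the left-hand side into two sub-events: a local one-arm event from $v^\circ$ to the boundary of a box of radius $r(v^\circ)/4$ around $v^\circ$, restricted to a half-plane-like neighbourhood of $v^\circ$ obtained by freezing free boundary conditions on an auxiliary disc; and a macroscopic gluing event connecting the endpoint of this arm to the wired arc $(ab)^\circ$. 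For the first event, monotonicity in boundary conditions together with the Domain Markov property of Section~\ref{sub:basic-properties-FK} reduce the estimate to a pure half-plane one-arm probability at scale $r(v^\circ)$, which Proposition~\ref{prop:one-arm-half-plane} bounds below by $c\,r(v^\circ)^{-1/2}$. For the second event, Theorem~\ref{thm:RSW-FK}, combined with the mixing and finite-energy properties recalled in Section~\ref{sub:basic-properties-FK}, yields a lower bound uniform in $r(v^\circ)$.

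Once \eqref{eq:one-arm-comparison} is established, the conclusion of the lemma is a direct geometric count. Since $\gamma$ is a self-avoiding path from $a$ to $b$ with $|a-b|\ge m$ (using $m\ge\ell$) and edges of $\cS$ have length comparable to $1$, the number of vertices on $\gamma$ is at least of order $m$. Moreover, $\gamma$ is contained in a region of diameter $O(m)$, so $r(v^\circ)\le Cm$ for every $v^\circ\in\gamma$. Summing \eqref{eq:one-arm-comparison} over $\gamma$ then gives
\begin{equation*}
	\sum_{v^\circ\in\gamma}\phi^{0/1}_\Omega[v^\circ\leftrightarrow(ab)^\circ]\ \geq\ |\gamma|\cdot c\,(1+Cm)^{-1/2}\ \geq\ c_1\,m^{1/2},
\end{equation*}
as desired. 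The main obstacle is making the half-plane comparison underlying \eqref{eq:one-arm-comparison} fully quantitative for vertices $v^\circ\in\gamma$ that lie close to one of the two straight arcs of $\partial\Omega$, or when $\gamma$ itself passes close to these arcs: this is exactly where the kite-extension surgery introduced in the proof of Proposition~\ref{prop:one-arm-half-plane} reappears, as it restores a clean flat boundary in a neighbourhood of $v^\circ$ and reduces the comparison to the exact setting of that proposition.
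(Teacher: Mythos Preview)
Your approach has a genuine gap: the pointwise lower bound $\phi^{0/1}_\Omega[v^\circ\leftrightarrow(ab)^\circ]\ge c(1+r(v^\circ))^{-1/2}$ is false in general. The free arc $\gamma$ is an \emph{arbitrary} self-avoiding curve, and near a given $v^\circ\in\gamma$ the domain $\Omega$ need not resemble a half-plane. For instance $\gamma$ can form a narrow fjord of width $O(1)$ and depth $R$ with $v^\circ$ at its tip; then $\phi^{0/1}_\Omega[v^\circ\leftrightarrow(ab)^\circ]$ is exponentially small in $R$ (the open cluster must traverse a long corridor with free boundary), while $r(v^\circ)\asymp R$, so your bound fails badly. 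No FKG or monotonicity reduction to a half-plane is available here: making the domain more half-plane-like near $v^\circ$ means \emph{removing} free boundary, which \emph{increases} the connection probability and therefore yields an upper, not a lower, bound. Your auxiliary claim that $\gamma$ lies in a region of diameter $O(m)$ is likewise unjustified; nothing in the definition of an $(m,\ell)$-corner domain bounds the extent of $\gamma$.

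The paper's argument avoids pointwise control on $\gamma$ altogether. One first extends the L-shaped wired arc by a layer of boundary kites so as to pin down the boundary argument of the FK observable $F$, and then uses that the full contour integral $\oint_{\partial\Omega'}(F\,d\cS+i\overline F\,d\cQ)$ has modulus $2$. After projecting onto the line $e^{i\pi/4}\mathbb R$, the contribution of the horizontal wired segment between $m/3$ and $2m/3$ is a sum of nonnegative terms, each comparable to $\phi^{0/1}_{\Omega'}[v_k^\bullet\leftrightarrow(ab)^\bullet]\ge c\,m^{-1/2}$ by Proposition~\ref{prop:one-arm-half-plane} applied to the \emph{dual} half-plane one-arm event; this step is legitimate precisely because the wired arc \emph{is} flat. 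Summing over $\asymp m$ such terms gives a wired-arc contribution $\gtrsim m^{1/2}$, which forces the free-arc contribution to be $\gtrsim m^{1/2}$ as well, and the latter is bounded termwise by $\sum_{v^\circ\in\gamma}\phi^{0/1}_{\Omega'}[v^\circ\leftrightarrow(ba)^\circ]$. The whole point is that the estimate is performed on the flat, controllable part of the boundary and then \emph{transferred} to the uncontrolled arc $\gamma$ via near-closedness of the contour integral; this transfer is exactly what makes the lemma nontrivial and what your pointwise strategy cannot reproduce.
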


We will not prove this lemma directly but instead establish the same bound for a slightly enlarged version of the domain.

\textbf{Extension of the domain to construct special cuts made of kites}
A central tool from \cite[Section~5.2]{Che20} is the notion of \emph{discrete half-planes}. These give a convenient discretisation of standard half-planes in which the boundary values of discrete fermionic observables have a prescribed sign, a property that is crucial here. They allow us to track boundary values along a given arc, but are not directly suited to controlling boundary values simultaneously on discretisations of orthogonal arcs.

We work with $\cS$ periodic. \emph{For simplicity, assume that one axis of the torus attached to the fundamental domain of $\cS$ is horizontal.} The general case follows by minor adjustments to the definition of $(m,\ell)$-corners. We implement the ideas of \cite{Mah23} and \cite[Chapter~6]{MahPHD}, relying on detailed arguments developed in \cite[Section~3]{Mah23}. The construction proceeds as follows (see also Figure~\ref{fig:kite-extension}):

\begin{itemize}
	\item In the plane $\cS$, fix the horizontal line $\mathcal{L}$ at level $y=0$, chosen so that it avoids all vertices of $\cS$. By \cite[Definition~3.1]{Mah23}, one can modify the tangential quadrilaterals intersecting $\mathcal{L}$ so that $\cS \cap \mathbb{H}$ becomes a half-plane that remains a proper $s$-embedding, with boundary vertices of $G^\bullet$ and $G^\circ$ lying on $\cS \cap \mathcal{L}$.
	\item Extend each boundary vertex by attaching a triangle, viewed as a \emph{tangential quadrilateral}, whose inscribed circle touches $\mathcal{L}$ exactly at black vertices.
	\item Below this triangular layer, attach a layer of \emph{kites}. In particular, the vertices of $G^\circ$ of these kites are placed so as to maintain the required $s$-embedding structure and to control the boundary argument of the associated FK observable.
	\item If $\cS$ is periodic, one can choose the cutting level $y$ (after a small vertical adjustment) so that all quads of $\cS \cap \mathbb{H}$ produced by this construction satisfy a uniform \Uniffff\ property, with constants depending only on those of the original grid $\cS$. This is where the horizontal axis of the torus is used.
\end{itemize}

The last point follows readily from the proof of \cite[Proposition~3.4]{Mah23}. A similar vertical construction is possible, although the very last layer may fail to remain \Uniffff.
\begin{figure}
\begin{minipage}{0.325\textwidth}
\includegraphics[clip, width=1.5\textwidth]{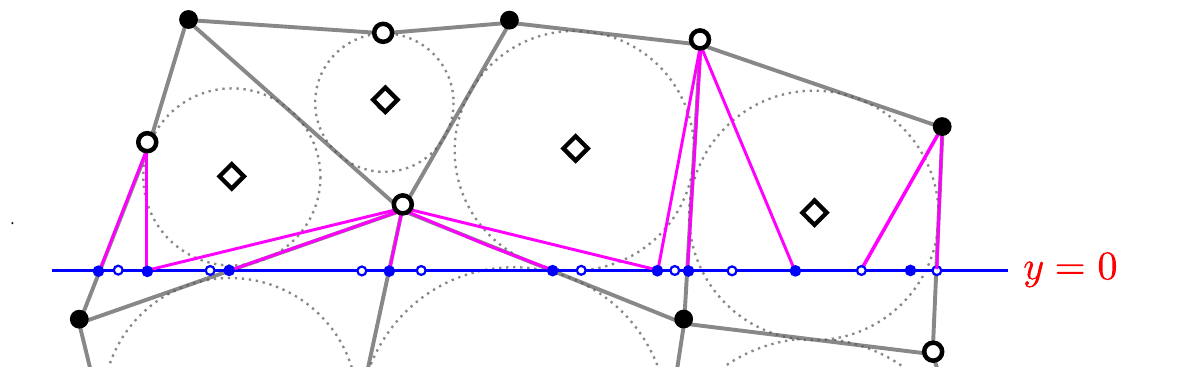}
\end{minipage} \hskip 0.07\textwidth \begin{minipage}{0.40\textwidth}
\includegraphics[clip, width=1\textwidth]{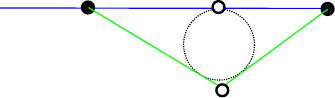}
\end{minipage} \vskip 0.05\textwidth  \begin{minipage}{0.40\textwidth}
\includegraphics[clip, width=0.8\textwidth]{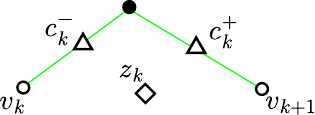}
\end{minipage} \begin{minipage}{0.45\textwidth}
\includegraphics[clip, width=\textwidth]{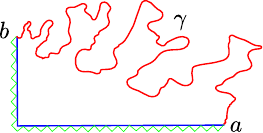}
\end{minipage}

\caption{TOP (Left): Slicing procedure creating some aligned boundary at level $y=0$. (Right): Extension using triangles viewed as tangential quadrilaterals. BOTTOM (Left): Boundary half-quad kite. (Right) $(m,\ell)$-corner with the its kite extension.}
\label{fig:kite-extension}
\end{figure}

The following lemma is key to understand the behaviour of FK-observables along straight wired arcs made of kites.  In the following lemma, $F$ denotes an $s$-holomorphic function associated with an FK Dobrushin Kadanoff-Ceva observable with \emph{wired} boundary conditions along the kites one level below $\cS \cap \mathcal{L}$, while $X$ is related to $F$ via \eqref{eq:X-from-F}.
\begin{lemma}
	In the previous setup, there exist $\phi_0>0$, only depending on for constants in \Unif\, such that for $z_k := (v_{k}^{\circ} v^\bullet v_{k+1}^{\circ})$ a boundary half-quad (which is a kite, crossed in from left to right) and $c_k^{\pm}\in z_k$ a boundary Kadanoff-Ceva corner one has for any $|\phi|\leq \tfrac{\pi}{4}+\phi_0.$
\begin{equation}\label{eq:boundary-comparison-fermion}
			\Re[F(z_k)\cdot e^{i\phi}] \geq 0 \quad \mathrm{ and } \quad \Re\Big[e^{i\phi} \Big( F(z_k)d\cS + i \overline{F(z_k)}d\cQ \Big)\Big] \asymp |X(c_k^\pm)|,
\end{equation}
\end{lemma}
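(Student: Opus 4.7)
My plan is to exploit two independent inputs: the sign of $X$ on the wired boundary (a probabilistic fact), and the uniform geometry of the attached kite layer (an \Uniffff\ fact). I outline the argument in three steps.

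\textbf{Step 1: Positivity of $X$ on the wired arc.} Using the Kadanoff--Ceva representation \eqref{eq:KC-fermions} and the fact that the kite layer carries wired boundary conditions, the values $X(c_k^{\pm})$ inherit a canonical sign once a global branch on $\Upsilon^{\times}_\varpi$ has been fixed; this is standard for the FK Dobrushin observable and parallels what is used in \cite[Section 3.6]{ChSmi2}. Up to this choice of lift, I may assume $X(c_k^{\pm}) \geq 0$.

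\textbf{Step 2: From positivity of $X$ to a wedge for $F(z_k)$.} By the projection identity \eqref{eq:X-from-F}, the inequalities $X(c_k^{\pm}) \geq 0$ read $\Re[\overline{\eta}_{c_k^{\pm}} F(z_k)] \geq 0$. The two arguments $\arg \eta_{c_k^\pm}$ are fixed by the arguments of $\cS(v^\bullet)-\cS(v_k^\circ)$ and $\cS(v^\bullet)-\cS(v_{k+1}^\circ)$, which by the kite construction recalled from \cite[Proposition 3.4]{Mah23} are uniformly bounded away from the tangent directions of $\mathcal{L}$. A direct trigonometric computation (comparing $\arg \eta_{c_k^\pm} = \tfrac{\pi}{4}-\tfrac{1}{2}\arg(\cS(v^\bullet)-\cS(v_k^\circ))$) shows that, after an appropriate common rotation absorbed into $\phi$, the intersection of the two half-planes $\{\Re[\overline{\eta}_{c_k^{\pm}} F]\geq 0\}$ is precisely a closed wedge of the form $\{w:\arg w \in [-\tfrac{\pi}{4}+\phi_0,\tfrac{\pi}{4}-\phi_0]\}$ for some $\phi_0>0$ depending only on the constants in \Unif. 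This wedge coincides with $\{w: \Re[w e^{i\phi}]\geq 0 \text{ for all } |\phi|\leq \tfrac{\pi}{4}+\phi_0\}$, which gives the first inequality.

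\textbf{Step 3: Quantitative comparability with $|X(c_k^{\pm})|$.} I substitute the reconstruction formula \eqref{eq:F-from-X} for $F(z_k)$ in terms of $X(c_k^+)$ and $X(c_k^-)$ into the one-form $F(z_k)\,d\cS + i\overline{F(z_k)}\,d\cQ$, using $d\cS = (\cX(c_k^\pm))^2$ and $|d\cQ|=|\cX(c_k^\pm)|^2$ along the two boundary edges of the kite (with the signs imposed by the orientation of the origami fold, see \cite[Section 2.3]{Che20}). The resulting expression is a linear combination of $X(c_k^+)$ and $X(c_k^-)$ with coefficients whose moduli are uniformly bounded above and below by the \Uniffff\ hypothesis. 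Crucially, the wedge constraint on $F(z_k)$ from Step~2 guarantees that rotating by $e^{i\phi}$ for $|\phi|\leq \tfrac{\pi}{4}+\phi_0$ does not annihilate the real part of the dominant coefficients, yielding $\Re[e^{i\phi}(F(z_k)d\cS + i\overline{F(z_k)}d\cQ)] \asymp |X(c_k^+)|+|X(c_k^-)|$. Finally, the three-term relation \eqref{eq:3-terms} applied at the kite $z_k$, together with the uniform lower bound on $\theta_{z_k}$ and $\tfrac{\pi}{2}-\theta_{z_k}$, gives $|X(c_k^+)| \asymp |X(c_k^-)|$, which completes the argument.

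\textbf{Expected main obstacle.} The delicate point is the geometric verification in Step~2: showing that the kite layer produced by the extension procedure of \cite[Proposition 3.4]{Mah23} yields corner directions $\eta_{c_k^\pm}$ whose associated half-planes intersect in a wedge whose opening is bounded away from $\pi$ uniformly in the lattice. Everything else is an algebraic manipulation once the correct lift of $\Upsilon^\times_\varpi$ has been consistently propagated along the boundary.
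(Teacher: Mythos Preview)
Your route differs from the paper's, and Step~2 contains a genuine gap. The two half-plane constraints $\Re[\overline{\eta}_{c_k^\pm}F]\geq 0$ are \emph{not} strong enough to force $\arg F(z_k)$ into a wedge of opening strictly less than $\pi/2$. Concretely, for a symmetric boundary kite with $v^\bullet$ above the horizontal segment $[v_k^\circ,v_{k+1}^\circ]$ and edge directions $\arg(\cS(v^\bullet)-\cS(v_k^\circ))=\theta$, $\arg(\cS(v^\bullet)-\cS(v_{k+1}^\circ))=\pi-\theta$, one computes $\arg\eta_{c_k^-}=\tfrac{\pi}{4}-\tfrac{\theta}{2}$ and $\arg\eta_{c_k^+}=-\tfrac{\pi}{4}+\tfrac{\theta}{2}$. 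The intersection of the two half-planes is then $\{\arg F\in[-\tfrac{\pi}{4}-\tfrac{\theta}{2},\ \tfrac{\pi}{4}+\tfrac{\theta}{2}]\}$, a wedge of opening $\tfrac{\pi}{2}+\theta>\tfrac{\pi}{2}$. In particular, $\arg F=\tfrac{\pi}{4}+\tfrac{\theta}{4}$ satisfies both positivity constraints but lies outside $(-\tfrac{\pi}{4},\tfrac{\pi}{4})$. Flipping the branch of one $\eta$ does narrow the wedge to opening $\tfrac{\pi}{2}-\theta$, but then it is centred at $\tfrac{\pi}{2}$, not at $0$. Either way, the linear constraints alone do not yield the statement.

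The paper instead uses the \emph{quadratic} information encoded in the constancy of $H_F$ along the wired arc: $H_F(v_{k+1}^\circ)=H_F(v_k^\circ)$ gives $\Im[F^2]\,d\cS+|F|^2\,d\cQ=0$, i.e.\ $\sin(2\arg F)=-d\cQ/d\cS$. Since the kite layer is \Uniffff, $|d\cQ/d\cS|$ is uniformly bounded away from~$1$, which pins $\arg F$ to within $\phi_0$ of one of the four values $0,\pm\tfrac{\pi}{2},\pi$; the correct branch $\arg F\in(-\tfrac{\pi}{4}+2\phi_0,\tfrac{\pi}{4}-2\phi_0)$ is then selected via \cite[Lemma~5.3]{Che20}. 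For the second estimate, the paper observes that $F\cdot(F\,d\cS+i\overline{F}\,d\cQ)=F^2\,d\cS+i|F|^2\,d\cQ\in\mathbb{R}^+$, whence $\arg(F\,d\cS+i\overline{F}\,d\cQ)=-\arg F$ lies in the same wedge, and the comparability with $|X(c_k^\pm)|$ follows from the reconstruction formula \eqref{eq:X-from-F} together with the bounded-angle property. Your Step~1 and the mechanics of Step~3 are reasonable, but the missing input---the $H$-constancy and the resulting bound on $|d\cQ/d\cS|$---is precisely what closes the gap between opening $\tfrac{\pi}{2}+\theta$ and the required $\tfrac{\pi}{2}-2\phi_0$.
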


\begin{proof}
Since the vector $\overrightarrow{v_k v_{k+1}}$ is purely real and positive, the boundary argument of $F(z_k)$ can be evaluated efficiently.  
Recall that along the boundary, we work with a \emph{wired} Kadanoff--Ceva fermion $X$, so the boundary half-quad value $F(z)$ satisfies
\begin{equation*}
	\Im\big[F^2(z)\,d\cS + i|F(z)|^2\,d\cQ\big]
	= H[F](v^\circ_{k+1}) - H[F](v^\circ_{k}) = 0,
\end{equation*}
where $d\cS = \cS(v^\circ_{k+1}) - \cS(v^\circ_{k}) \in \mathbb{R}^+$ and $d\cQ = \cQ(v^\circ_{k+1}) - \cQ(v^\circ_{k})$.  
Dividing by $d\cS$, one obtains
\begin{equation}\label{eq:boundary-argument-F}
	F^2(z) + i|F(z)|^2\,\frac{d\cQ}{d\cS} \in \mathbb{R}.
\end{equation}
Moreover, the boundary argument of Kadanoff-Ceva fermions along wired arcs is given by (see \cite[Lemma~5.3]{Che20})
\begin{equation*}
	\arg F(z_k) = \arg\Big(i\varsigma\big(\overline{\mathcal{X}}(c_k^+) - \overline{\mathcal{X}}(c_k^-)\big)\Big).
\end{equation*}
Hence, the left-hand side of \eqref{eq:boundary-argument-F} actually lies in $\mathbb{R}^+$. Since the extended grid satisfies a uniform \Uniffff\ property near the horizontal boundary (with constants depending only on those of the original grid $\cS$), the ratio $\frac{d\cQ}{d\cS}$ is uniformly bounded away from $1$.  
It follows that there exists a small $\phi_0 > 0$, depending only on the constants in \Uniffff\ (and thus on those of $\cS$), such that
\begin{equation}
	\arg F(z_k) \in \big]-\tfrac{\pi}{4} + 2\phi_0,\ \tfrac{\pi}{4} - 2\phi_0\big[.
\end{equation}
This establishes the left-hand side of \eqref{eq:boundary-comparison-fermion}.  

Furthermore, since
\begin{equation}
	F(z_k)\cdot \big(F(z_k)d\cS + i\,\overline{F(z)}\,d\cQ\big) \in \mathbb{R}^+,
\end{equation}
one has
\begin{equation} \label{eq:boundary-argument}
	\arg\big(F(z)\,d\cS + i\,\overline{F}(z)\,d\cQ\big)
	= \arg\big(\overline{F}(z)\big)
	\in \big]-\tfrac{\pi}{4} + 2\phi_0,\ \tfrac{\pi}{4} - 2\phi_0\big[.
\end{equation}
Combining the bounded-angle property of $\cS$, the reconstruction formula \eqref{eq:X-from-F}, 
and the observation \eqref{eq:boundary-argument}, we obtain \eqref{eq:boundary-comparison-fermion}.  
This completes the proof.
\end{proof}
Note that a similar construction exists for half-planes with other orientations. 
In particular, it is possible to construct a periodic boundary on $i\mathbb{H}$, 
the half-plane representing the region where $\Re[z] \geq 0$.

\begin{proof}
The proof follows the same general strategy as in \cite{DCHN,DMT21}, relying on the fact that the boundary contour integral nearly vanishes.  
Here, the additional layer of kites plays a key role in controlling the boundary argument of the FK-observable. Consider an $(m,\ell)$-corner domain $\Omega$ with Dobrushin boundary conditions, and let $\Omega'$ be the domain obtained by adding one horizontal layer of kites along both the bottom horizontal arc and the vertical arc-each extension being made along the wired boundary.  
We keep the same free arc $\gamma$. Let $X$ denote the FK-Dobrushin observable on $\Omega'$, with wired boundary conditions on the (kite-extended) arc $(ba)^\circ$ (oriented counterclockwise) and free boundary conditions on $\gamma = (ab)^\bullet$.  
Let $F$ be the corresponding complex $s$-holomorphic function defined via \eqref{eq:X-from-F}.  
For a positively oriented boundary contour, one has
\begin{equation}
	\Big|\oint_{\partial \Omega'} F\,d\cS + i\,\overline{F}\,d\cQ \Big|=  2,
\end{equation}
since interior boundary integrals vanish and branching occurs only at the boundary corners, where $X(a) = X(b) = 1$.  
We now decompose the boundary $\partial \Omega'$ into the wired arc $(ba)^\circ$ and the free arc $\gamma = (ab)^\bullet$.

\textbf{Contribution from $(ba)^\circ$.}  
Along the wired arc,
\begin{equation}
	\int_{(ba)^\circ} F\,d\cS + i\,\overline{F}\,d\cQ
	= \int_{\mathrm{vert}^\circ} F\,d\cS + i\,\overline{F}\,d\cQ
	+ \int_{\mathrm{hor}^\circ} F\,d\cS + i\,\overline{F}\,d\cQ,
\end{equation}
where $\mathrm{vert}^\circ$ and $\mathrm{hor}^\circ$ denote the vertical and horizontal parts of $(ba)^\circ$, respectively.  
Projecting this expression onto the line $e^{i\pi/4}\mathbb{R}$, the contribution of the vertical part is positive, since the boundary argument for kites in this vertical discretisation is in $]0;\frac{\pi}{2}[$ (and not a priori uniformly bounded away from $0$ or $\frac{\pi}{2}$ as the grid is not expected to be \Uniffff\ there).  Moreover, for a boundary kite $z_k \in \partial \Omega'$ on the horizontal segment between $m/3$ and $2m/3$, one has
\begin{equation}\label{eq:polynomial-bound}
	\Re\big[e^{-i\pi/4}\!\!\int_{v_k^\circ}^{v_{k+1}^\circ} F\,d\cS + i\,\overline{F}\,d\cQ\big]
	\asymp \phi^{0/1}_{\Omega'}(v_k^\bullet \longleftrightarrow (ab)^\bullet) \ge c\,m^{-1/2},
\end{equation}
using the exact one-arm exponent in the half-plane, which remains the same despite the  kite extension, enforced by the finite energy property. Summing over the $\asymp m/3$ primal vertices between $m/3$ and $2m/3$ yields the desired lower bound.

\textbf{Contribution from $(ab)^\bullet$.}  
For each $v^\bullet_k \sim v^\bullet_{k+1} \in (ab)^\bullet$ separated by $v_k^\circ \in \Omega$, one has
\begin{equation}
	|I[F](v^\bullet_{k+1}) - I[F](v^\bullet_{k})| \asymp \phi^{0/1}_{\Omega'}(v_k^\circ \longleftrightarrow (ba)^\circ).
\end{equation}
Hence there exists $c_3 > 0$, only depending on constants in \Unif\, such that
\begin{align}
	\sum_{v^\circ \in \gamma} \phi_{\Omega'}^{0/1}[v^\circ \longleftrightarrow (ab)^\circ]
	&\ge c_3 \sum_{z_k \in \gamma} |I[F](v^\bullet_{k+1}) - I[F](v^\bullet_{k})| \\
	&\ge \Big|\Re\!\Big[e^{-i\pi/4} \!\!\int_{(ab)^\bullet}\!\! F\,d\cS + i\,\overline{F}\,d\cQ \Big]\Big|\\
	&\ge \Big|\Re\!\Big[e^{-i\pi/4} \!\!\int_{(ba)^\circ}\!\! F\,d\cS + i\,\overline{F}\,d\cQ \Big]\Big| - 2,
\end{align}
which, combined with \eqref{eq:polynomial-bound}, establishes the desired bound for $\Omega'$.  
Finally, by monotonicity with respect to boundary conditions-bringing the wired arc closer (replacing the green arc with the red one)-the result extends to $\Omega$.\end{proof}
We are now in position to conclude for the proof of Proposition \ref{prop:M(r,R)} in a very similar way that the proof of Proposition 1.8 in \cite{DMT21} for $r=0$. In particular, the notations correspond exactly to those of used in \cite[Figure 8]{DMT21}, and we do not claim any novelty here.
\begin{proof}
Fix an $R$-centered domain and consider the largest
Euclidean ball $B$ centered at the origin and contained in $\mathcal{D}$. 
Let $x=(x_1,x_2)$ be a face of $\partial \mathcal{D} \cap \partial B$ and assume, without loss of generality, 
that $x$ is the wedge of $\{(u,v):u \geq v \geq 0\}$, where the ordering of $\mathbb{Z}^2$ is induced by 
the fundamental domains obtained as translates of the discrete horizontal and vertical half-planes constructed above. 
Let $y$ be the rightmost face of $\mathbb{N}^\star \times \{x_2+\delta R\}$ that is contained in $B$ and has the same type as $x$. 
Let $\tau$ be the translation mapping $x$ to $y$. 

Define a \emph{corner domain associated with} $\mathcal{D}$ as follows:  
set $a:=x-(8\delta^2 R,0)$, obtained by translating $x$ by shifts of fundamental domains so that $a$ has the same type as $x$;  
define $b:=\tau(a)$;  
let $\mathrm{Rect}$ denote the “rectangle” (its vertical and horizontal sides are taken as translates of $\partial \mathbb{H}$ and $\partial i\mathbb{H}$) with top-left corner $b$ and bottom corner $x$.  
Let $\Omega$ be the connected component of $a$ in
\[
(\mathcal{D}\cap \Lambda_{7R})\cap \tau(\mathcal{D}\cap \Lambda_{7R})\cap (\mathrm{Rect}\cup B^c).
\]
We will choose $\delta>0$ (unrelated to the $\delta$ scaling in previous sections) small enough.  

It is straightforward to see that the distance between $x$ and $y$ is at most $2\delta R|\mathcal{T}|$, 
while the distance along the horizontal line through $x$ between $x$ and the translated ball $\tau(B)$ 
is at most $4|\mathcal{T}|\delta^2 R$.  
For $\delta$ small and $R$ large, $\mathrm{Rect}$ is contained in $B\cap \tau(B)$ and thus part of $\partial \Omega$.  
We consider $(\Omega,a,b)$ as an $(m,\ell)$ Dobrushin corner with $m=\delta R$ and $\ell:=x_1-y_1$.  
Applying Lemma \ref{lem:lower-bound-crossing} gives
\begin{equation}\label{eq:contribution-arc}
	\sum\limits_{v^\circ\in (ab)}\phi_{\Omega}^{0/1}(z\longleftrightarrow (ba)^\circ) 
	\geq c_2\,(\delta R)^{1/2}.
\end{equation}  

We now decompose the boundary arc $(ab)$ into five sets:
\begin{itemize}
	\item $S_1$: faces of $\partial \mathcal{D}$,
	\item $S_2$: faces of $\tau(\partial \mathcal{D})$,
	\item $S_3$: faces of $\partial (\Lambda_{7R}\cap \tau(\Lambda_{7R}))$,
	\item $S_4$: horizontal faces between $a$ and $x$, denoted $[ax]$,
	\item $S_5$: horizontal faces between $b$ and $y$, denoted $[yb]$.
\end{itemize}

There are at most $16|\mathcal{T}|\delta^2 R$ vertices in $S_4\cup S_5$.  
To contribute to the sum in \eqref{eq:contribution-arc}, such vertices must realize a one-arm event in the half-plane 
starting from the bottom middle of the lower part of $\mathrm{Rect}$.  
For each face, this happens with probability at most $O((R\delta^2)^{-1/2})$, so by mixing we obtain
\begin{equation}\label{eq:contribution-arc}
	\sum\limits_{S_4\cup S_5}\phi_{\Omega}^{0/1}(z\longleftrightarrow (ba)^\circ) 
	\leq C\,R\,\delta^2\,(R\delta^2)^{-1/2},
\end{equation}
which is less than one quarter of the right-hand side of \eqref{eq:contribution-arc} when $\delta$ is small enough.  

For $v^\circ\in S_3$, the mixing property gives
\[
	\phi_{\Omega}^{0/1}(v^\circ\longleftrightarrow (ba)^\circ) 
	\leq C' \,\pi_{1}^{+}(\delta R)\, \phi_{\Omega}^{0/1}((ba)\longleftrightarrow \partial \Lambda_{6R}).
\]
We now bound the last term on the right-hand side.  
Here, the cluster must cross a free corridor of width $\delta R$ and length $6R$.  
As in the original proof, this occurs with probability at most $e^{-c\delta^{-1}}$ 
for some universal constant $c>0$ depending only on the constants of Theorem \ref{thm:RSW-FK}.  
Thus, for $\delta$ small, the one-arm exponent implies that this contribution is also less than one quarter 
of the right-hand side of \eqref{eq:contribution-arc}.  

Consequently,
\begin{equation}
	\sum\limits_{v^\circ\in S_1\cup S_2}\phi_{\Omega}^{0/1}(v^\circ \longleftrightarrow (ba)^\circ) 
	\geq c(\delta)\,R^{1/2}.
\end{equation}

Finally, the same argument as in the original proof of \cite{DMT21} 
converts this large $\phi_{\Omega}^{0/1}$ expectation into a large $\phi_{\mathcal{D}}^{0}$ expectation on boundary vertices connected to $\Lambda_R$.  
This uses only RSW-type estimates and basic properties of the random-cluster model, 
which apply here without change.\end{proof} 

\begin{proof}[Proof of Theorem \ref{thm:SuperStrongRSW}]
Once Proposition \ref{prop:M(r,R)} is established, all the renormalization arguments of \cite{DMT21} apply almost verbatim in our setting, allowing us to conclude in the same way.
\end{proof}

\section{Concluding argument using CLE(16/3) convergence}

We now use the CLE convergence to conclude about the scaling limit of spin correlations. Let us state this precisely. In what follows, $d_\mathcal{X}$ denotes the standard metric on the space of loop collections, recalled in \cite[Section 2.7]{BenHon19}. 

\begin{theo}[Kemppainen-Smirnov \cite{KemSmi2}]\label{thm:CLE-convergence}
 Fix a Jordan domain $\Omega$ approximated in the Hausdorff sense by $\Omega^\delta\subset \cS^\delta$. Then for the topology of convergence $d_\mathcal{X}$, the collections of loops converges to $\mathrm{CLE_{\Omega}(16/3)}$.
\end{theo}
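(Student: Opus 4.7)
The plan is to verify the hypotheses of the Kemppainen--Smirnov criterion for convergence of random collections of loops in $d_\mathcal{X}$, which splits the problem into (i) tightness of the sequence of FK--cluster--boundary loop collections on $\Omega^\delta$, and (ii) identification of subsequential limits with $\mathrm{CLE}_{\Omega}(16/3)$. Both inputs can, in principle, be imported from the square lattice argument once the two key discrete tools for our setting are available: Theorem~\ref{thm:RSW-FK} and, more importantly, the Super Strong Box Crossing Property of Theorem~\ref{thm:SuperStrongRSW}, together with the convergence of chordal FK-interfaces of Theorem~\ref{thm:SLE-periodic}.

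For (i), I would establish the Kemppainen--Smirnov annulus--crossing condition: for every annulus $A(z,r,R) \subset \Omega$ with $R/r$ bounded and for any realisation of the configuration in its complement, the probability that a single loop makes $k$ unforced crossings of the annulus decays exponentially in $k$, uniformly in $\delta$. This follows from Theorem~\ref{thm:SuperStrongRSW}, the mixing property recalled in Section~\ref{sub:basic-properties-FK}, and positive association: at each dyadic scale inside the annulus, a dual (resp.\ primal) circuit blocks further crossings with probability bounded from below independently of the outside, by applying Theorem~\ref{thm:SuperStrongRSW} to an annular topological rectangle of bounded extremal length. Theorem~\ref{thm:SuperStrongRSW} is essential here, as it provides crossing bounds with free boundary conditions and in arbitrary conformal aspect ratios, which are precisely what the Kemppainen--Smirnov framework requires at all scales and in all shapes created by the neighbouring configuration.

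For (ii), I would use a branching exploration tree targeted at a countable dense set of points $(q_k)_{k \ge 1} \subset \Omega$. Fix $k$ and condition on the configuration outside the FK-cluster of $q_k$; the outer boundary loop of this cluster is then (by the domain Markov property) a chordal FK-interface in a random simply connected subdomain with Dobrushin-type boundary conditions, which by Theorem~\ref{thm:SLE-periodic} converges in law to chordal $\mathrm{SLE}(16/3)$ in the Carath\'eodory limit of that subdomain. Iterating this exploration targeted at the points $q_1, q_2, \dots$ and using the standard branching SLE coupling, one shows that any subsequential limit of the loop collection satisfies the defining spatial Markov, reversibility and conformal invariance properties of $\mathrm{CLE}(16/3)$, and hence coincides with $\mathrm{CLE}_{\Omega}(16/3)$ by the Sheffield--Werner characterisation.

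The main obstacle will be the conditional convergence in step (ii): after conditioning on already-explored loops, the residual domain is random, possibly irregular, and one must ensure Carath\'eodory convergence to a simply connected continuous domain and that Theorem~\ref{thm:SLE-periodic} applies uniformly in the boundary regularity produced by the exploration. Theorem~\ref{thm:SuperStrongRSW} again plays the decisive role here, ruling out with high probability the emergence of macroscopic bottlenecks or fjords of very small extremal length along previously explored loops, so that the residual Dobrushin domains fall within the Carath\'eodory scope of Theorem~\ref{thm:SLE-periodic}. The rest of the argument is insensitive to the precise microscopic lattice structure and follows the square-lattice treatment verbatim.
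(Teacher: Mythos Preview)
Your proposal is correct and matches the paper's own treatment: the paper does not give a detailed proof either, but simply states that the result follows by plugging the convergence of the FK-martingale observable and FK-interfaces to $\mathrm{SLE}(16/3)$ from \cite{Che20} together with the Super Strong RSW estimate of Theorem~\ref{thm:SuperStrongRSW} into the Kemppainen--Smirnov analysis \cite{KemSmi2}. Your outline of tightness via the annulus-crossing condition and identification via the branching exploration is exactly this combination, spelled out in somewhat more detail than the paper itself provides.
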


This theorem combines three ingredients: the convergence of FK-martingales observables to their continuous counterpart and of the FK-interfaces to SLE$(16/3)$ from \cite{Che20}, the Russo-Seymour-Welsh estimate in terms of extremal length (Theorem~\ref{thm:SuperStrongRSW}), all plugged into the analysis of \cite{KemSmi2}. As already stated in the introduction, the convergence to CLE loop ensembles is done here in the natural topology for convergence of loop ensembles, while one could wonder if the same statement holds for the Schramm-Smirnov quad topology of \cite{schramm2011scaling}. This statement is widely believed to hold but might currently be missing some effective complete reference. Still, for very simple connectivity events that we use, the former is sufficient.

 We now turn to the proof of Theorem~\ref{thm:Correlation-function}. Beforehand, we recall basic connections between the Ising and FK--Ising models. Let $\Omega^\delta$ be a simply connected discrete domain with \emph{wired} boundary conditions for Ising, and let $\phi_{\Omega^\delta}^{1}$ be the corresponding FK measure. By the Edwards--Sokal coupling, for any faces $a^\delta,b^\delta,c^\delta,d^\delta\in \Omega^\delta$ one has
\[
\frac{\mathbb{E}_{\Omega^\delta}^{(\mathrm{w})}[\sigma_{a^\delta}\sigma_{b^\delta}]}{\mathbb{E}_{\Omega^\delta}^{(\mathrm{w})}[\sigma_{c^\delta}\sigma_{d^\delta}]}
=
\frac{\phi_{\Omega^\delta}^{1}[\,a^\delta \longleftrightarrow b^\delta\,]}{\phi_{\Omega^\delta}^{1}[\,c^\delta \longleftrightarrow d^\delta\,]}.
\]
This identity will be our main tool for proving convergence and universality of spin correlations. Throughout, assume $\Omega^\delta \to \Omega$ in the Hasdorff sense. As a first intermediate step, we relate point-to-point connection probabilities to connections of macroscopic loops in the CLE ensemble. Heuristically, as $\delta\to0$, for two points to lie in the same FK cluster, each must first connect to a small macroscopic circle, after which a cluster connects the two circles. We model this as (almost) independent one-arm events around each point, followed by a circle-to-circle connection expressed via CLE estimates. The key object here is the Incipient Infinite Cluster (I.I.C.\,) for the FK-Ising model, originally developed for percolation \cite{SBRN76,kesten1986incipient}; see also \cite[Appendix~A]{oulamara2022random}. Thanks to Theorem~\ref{thm:SuperStrongRSW}, we may invoke \emph{quasimultiplicativity} of arm events as in \cite[Proposition~6.3]{DMT21}, also recalled below.
 Fix a face $f_0$ and $R>0$ and $B$ be an event only depending on finitely many edges. For $R$ large enough, and $N\geq R$, define 
 \begin{equation*}
 	\mathbb{P}_{N}(B):= \phi^{0}_{\Lambda_N}(B| \partial \Lambda_R(f_0) \longleftrightarrow \partial \Lambda_N(f_0))
 \end{equation*}
 Using the technology recalled in \cite[Proposition A.2.1]{oulamara2022random}, it is possible to construct a true limiting measure (using Kolmogorov extension), denoted $ \Phi^{\Lambda_R(f_0)}_{\textrm{IIC}}$ such that 
 \begin{equation*}
 \Phi^{\Lambda_R(f_0)}_{\textrm{IIC}}(B)=\lim\limits_{N\to \infty}\mathbb{P}_{N}(B).
 \end{equation*}
Informally speaking, this measure is the FK measure conditioning the boundary of the box $\Lambda_R(f_0) $ to be connected to infinity. Moreover, this limiting convergence is polynomial, as recalled in \cite[Proposition A.2.2]{oulamara2022random} (for a more complicated setup of $3$ arm events), reconstructing arguments of \cite{garban2013pivotal}.  For $n\geq R$, let $\mathcal{F}_n$ be the set of events only depending on edges of $\Lambda_n$. Then there exist $c>0$, only depending on the geometry of $\cS$ (and therefore on the associated RSW bounds obtained from Theorem \ref{thm:RSW-FK}) such that 
\begin{equation*}
	\sup\limits_{B\in \mathcal{F}_n} \frac{|\Phi^{\Lambda_R(f_0)}_{\textrm{IIC}}(B)- \mathbb{P}_{N}(B)|}{\Phi^{\Lambda_R(f_0)}_{\textrm{IIC}}(B)} \leq \frac{1}{c}\cdot \Big(\frac{n}{N}\Big)^c
\end{equation*}
We keep introducing additional notations.  For a point $v \in \mathbb{C}$ and a periodic grid $\cS^\delta$, let $\mathcal{C}_\varepsilon(v)$ denote a discrete circle of radius $\varepsilon$ centred at $v$, defined as the boundary of the discrete ball $\mathcal{B}_\varepsilon(v)$. We assume for simplicity that the two  discrete circles are of the same radius, centred at different point, are obtained by translation. For $a^\delta \in \cS^\delta$, $\delta\leq r_1\leq r_2 $  denote by 
\begin{equation}\label{eq:def-1-arm}
	\pi_{1}(a^\delta,r_1,r_2):= \phi_{\cS^\delta}\Big[ \mathcal{C}_{r_1}(a^\delta) \leftrightarrow  \mathcal{C}_{r_2}(a^\delta) \Big].
\end{equation}
Note that separation of arm events comes from Theorem \ref{thm:SuperStrongRSW} (see \cite[Proposition 6.3]{DMT21}) and reads here (for some constant $\asymp$ only depending on \Unif\,) as
\begin{equation}\label{eq:mixing}
	\pi_{1}(a^\delta,r_1,r_3) \asymp \pi_{1}(a^\delta,r_1,r_2) \cdot \pi_{1}(a^\delta,r_2,r_3)
\end{equation}
the probability of a one-arm event in the full-plane. One can also define a four-arm event, where the discrete circles $\mathcal{C}_{r_1}(a^\delta)$ and $\mathcal{C}_{r_2}(a^\delta)$ are connected by $2$ disjoint clusters (separated by two disjoint dual clusters), whose probability is denoted by
\begin{equation}\label{eq:def-4-arm}
	\pi_{4}(a^\delta,r_1,r_2):= \phi_{\cS^\delta}\Big[ \mathcal{C}_{r_1}(a^\delta) \overset{ \textrm{(2)}}{\longleftrightarrow}  \mathcal{C}_{r_2}(a^\delta) \Big].
\end{equation}

By rescaling the lattice, one can define the measure $\mathbb{P}^\delta_{\mathrm{IIC}}$ on $\cS^\delta$. We now establish the following lemma, which serves as the key ingredient for expressing FK cluster connection probabilities in terms of CLE events and I.I.C.\ connection probabilities. The author thanks Emile Av\'erous and Tiancheng He for enlightening discussions on the I.I.C.\ measure that allowed to prove the statement below.
\begin{lemma}\label{lem:IIC-writing}
	Fix $\varepsilon>0$ and two faces $a^\delta,b^\delta \in \Omega^\delta$, approximating respectively the points $a,b\in \Omega$, which lie at a definite distance $r$ from each other and from the boundary. For $\delta \ll \varepsilon \leq \sqrt{\varepsilon}\leq r/100 $, there exist $c>0$, only depending on constants in \Unif\,, such that one has, uniformly in $\delta$ small enough
\begin{equation*}
	\phi_{\Omega^\delta}^{1}\Big[ a^\delta \leftrightarrow b^\delta \Big] = \phi_{\Omega^\delta}^{1}\Big[ \mathcal{C}_{\varepsilon}(a^\delta) \leftrightarrow \mathcal{C}_{\varepsilon}(b^\delta) \Big] \Phi^{\mathcal{C}_{\varepsilon}(a^\delta)}_{\mathrm{IIC}}\Big[ a^\delta \leftrightarrow  \infty  \Big] \cdot \Phi^{\mathcal{C}_{\varepsilon}(b^\delta)}_{\mathrm{IIC}}\Big[ b^\delta \leftrightarrow  \infty  \Big],
\end{equation*}	
up to a multiplicative error $(1+O(\varepsilon^c))(1+O((\sqrt{\varepsilon}/r)^c)$. Note that the event $\{ a^\delta \leftrightarrow  \infty \}  $ depends on infinitely many edges, but can still be approximated in a quantitative way by events depending on finitely many edges.
	\end{lemma}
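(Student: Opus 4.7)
The plan is to perform a three-scale I.I.C.\ decomposition of the two-point function. The three scales $\varepsilon \ll \sqrt{\varepsilon} \ll r$ play the following roles: $\varepsilon$ is the radius of the inner circles, $\sqrt{\varepsilon}$ serves as an intermediate \emph{buffer} scale that allows decoupling the local one-arm events around $a^\delta$ and $b^\delta$, and $r$ is the macroscopic separation of $a^\delta$ and $b^\delta$ from each other and from $\partial\Omega$. I would begin with the essentially exact decomposition
\[
\{a^\delta \leftrightarrow b^\delta\}\ =\ A_a \cap A_b \cap C,\qquad A_\star := \{\star \leftrightarrow \mathcal{C}_\varepsilon(\star)\},
\]
where $C := \{\mathcal{C}_\varepsilon(a^\delta) \leftrightarrow \mathcal{C}_\varepsilon(b^\delta)\text{ using edges outside }\mathcal{B}_\varepsilon(a^\delta)\cup\mathcal{B}_\varepsilon(b^\delta)\}$. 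The inclusion $\{a^\delta \leftrightarrow b^\delta\} \subseteq A_a \cap A_b \cap C$ is immediate; the reverse inclusion requires that the landing points on $\mathcal{C}_\varepsilon(\star)$ of the inner arm and of the outer macroscopic connection coincide, which is handled by standard arm-separation/gluing estimates of the type of \cite[Proposition~6.3]{DMT21} (valid here thanks to Theorem~\ref{thm:SuperStrongRSW}), costing only a multiplicative factor $1+O(\varepsilon^c)$.

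The core of the argument consists of two decoupling steps. First, I would factor
\[
\phi^1_{\Omega^\delta}[A_a \cap A_b \cap C]\ =\ \phi^1_{\Omega^\delta}[C]\cdot \phi^1_{\Omega^\delta}[A_a \mid C]\cdot \phi^1_{\Omega^\delta}[A_b \mid C]\cdot (1+O(\varepsilon^c)),
\]
where the conditional quasi-independence of $A_a$ and $A_b$ given $C$ follows from the quantitative mixing property of Section~\ref{sub:basic-properties-FK} applied over the buffer annuli between scales $\varepsilon$ and $\sqrt\varepsilon$. Second, I would identify each $\phi^1_{\Omega^\delta}[A_\star \mid C]$ with the I.I.C.\ quantity $\Phi^{\mathcal{C}_\varepsilon(\star)}_{\mathrm{IIC}}[\,\star\leftrightarrow\infty\,]$. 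For this, one more application of the mixing property across the annulus $\mathcal{B}_{r/10}(\star)\setminus\mathcal{B}_{\sqrt\varepsilon}(\star)$ replaces the conditioning $C$ by the weaker conditioning $\{\mathcal{C}_\varepsilon(\star) \leftrightarrow \partial \mathcal{B}_{\sqrt\varepsilon}(\star)\}$, at cost $1+O((\sqrt{\varepsilon}/r)^c)$. The resulting conditional probability is then identified with the I.I.C.\ value via the polynomial convergence estimate of \cite[Proposition~A.2.2]{oulamara2022random}, applied with event scale $n = \varepsilon/\delta$ and conditioning scale $N = \sqrt{\varepsilon}/\delta$, with relative error $(n/N)^c = \varepsilon^{c/2}$, of the form $O(\varepsilon^c)$ after redefining $c$. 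Collecting the three factors yields the claimed identity.

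The main obstacle will be the careful multiplicative bookkeeping of the various relative errors through this chain of conditional manipulations, so that they genuinely combine into the announced form $(1+O(\varepsilon^c))(1+O((\sqrt{\varepsilon}/r)^c))$ rather than degrading into a loose constant factor. The most delicate step is the initial arm-separation/gluing argument, where the quantitative matching of landing points on the inner circles requires the sharp half-plane one-arm exponent established in Proposition~\ref{prop:one-arm-half-plane} to sum over possible landing configurations without losing an uncontrolled constant. All other ingredients (mixing across buffers, polynomial I.I.C.\ convergence) are standard in the near-critical random-cluster literature and transfer to our periodic setting thanks to the crossing estimates of Theorem~\ref{thm:SuperStrongRSW}. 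The final remark of the lemma, that the infinite-edge event $\{\star\leftrightarrow\infty\}$ can be approximated by finite-edge events, follows again from the polynomial I.I.C.\ convergence together with the one-arm bound at scale $\sqrt\varepsilon$.
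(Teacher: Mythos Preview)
Your overall strategy---three scales, mixing, I.I.C.\ convergence---is in the right spirit, but the first step contains a genuine gap. You write $\{a^\delta\leftrightarrow b^\delta\}=A_a\cap A_b\cap C$ and then claim the reverse inclusion is repaired ``by standard arm-separation/gluing estimates \ldots\ costing only a multiplicative factor $1+O(\varepsilon^c)$''. This is not correct: arm-separation/gluing (e.g.\ \cite[Proposition~6.3]{DMT21}) yields uniformly bounded \emph{constant} factors, not factors close to $1$. Concretely, on $A_a\cap C$ the inner arm from $a^\delta$ and the external cluster may well land on $\mathcal{C}_\varepsilon(a^\delta)$ in distinct clusters, and the conditional probability that they actually merge is bounded away from $0$ and from $1$---there is no large scale ratio here that would make the failure event polynomially rare. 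Your invocation of Proposition~\ref{prop:one-arm-half-plane} (the half-plane one-arm exponent) is also off target: that proposition plays no role in this lemma.

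The paper avoids this difficulty by a different decomposition: it conditions on the configuration $\omega^{(\varepsilon)}$ outside the \emph{larger} balls $\mathcal{B}_{\sqrt\varepsilon}(a^\delta)$, $\mathcal{B}_{\sqrt\varepsilon}(b^\delta)$ and introduces the event $\mathrm{Uni}_\varepsilon$ that there is a \emph{unique} cluster $\Gamma_\varepsilon^\delta$ connecting $\mathcal{C}_{\sqrt\varepsilon}(a^\delta)$ to $\mathcal{C}_{\sqrt\varepsilon}(b^\delta)$. On $\mathrm{Uni}_\varepsilon$, the factorisation $\phi[a^\delta\leftrightarrow b^\delta\mid\omega^{(\varepsilon)}]=\phi[a^\delta\leftrightarrow\Gamma_\varepsilon^\delta\mid\omega^{(\varepsilon)}]\cdot\phi[b^\delta\leftrightarrow\Gamma_\varepsilon^\delta\mid\omega^{(\varepsilon)}]$ is \emph{exact} by domain Markov (the two connections live in disjoint balls), and each factor is then identified with the I.I.C.\ value via polynomial convergence across scales $\varepsilon\ll\sqrt\varepsilon$, giving the $O(\varepsilon^c)$ error. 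The complement $\mathrm{Uni}_\varepsilon^{\frak c}$ forces a four-arm event between scales $\sqrt\varepsilon$ and $r$, and the comparison $\pi_4\le c(\sqrt\varepsilon/r)^c\,\pi_1$ shows its contribution is negligible, producing the $O((\sqrt\varepsilon/r)^c)$ error. The uniqueness device is the missing idea in your proposal: it replaces the impossible ``$1+O(\varepsilon^c)$ gluing'' by an exact identity plus a rare-event remainder.
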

\begin{proof}
	We argue by a multiscale analysis, using the polynomial convergence in the construction of the I.I.C.\ measure. Before the details, we fix some auxiliary notation. Set
\[
M_{\varepsilon}(a^\delta,b^\delta):=\mathcal{B}_{\sqrt{\varepsilon}}(a^\delta)^{c}\cap\mathcal{B}_{\sqrt{\varepsilon}}(b^\delta)^{c}.
\]
For a cluster configuration $\omega$ on $\Omega^\delta$, let its restriction to $M_{\varepsilon}(a^\delta,b^\delta)$ be
$\omega^{(\varepsilon)}:=\omega\!\restriction_{M_{\varepsilon}(a^\delta,b^\delta)}$.
Define the event
\[
\mathrm{Uni}_{\varepsilon}(a^\delta,b^\delta)
:=\big\{\mathcal{C}_{\sqrt{\varepsilon}}(a^\delta)\overset{!}{\longleftrightarrow}\mathcal{C}_{\sqrt{\varepsilon}}(b^\delta)\big\},
\]
that there exists a \emph{unique} cluster $\Gamma_\varepsilon^\delta$ connecting the two discrete circles of radius $\sqrt{\varepsilon}$ centred at $a^\delta$ and $b^\delta$. Note that the event $\mathrm{Uni}_{\varepsilon}(a^\delta,b^\delta)$ is measurable with respect to the edges in $M_{\varepsilon}(a^\delta,b^\delta)$. We then decompose
\begin{align*}
\phi_{\Omega^\delta}^{1}\!\big[a^\delta \leftrightarrow b^\delta\big]
&=\phi_{\Omega^\delta}^{1}\!\big[a^\delta \leftrightarrow b^\delta \cap \,\mathrm{Uni}_{\varepsilon}(a^\delta,b^\delta)\big]
 +\phi_{\Omega^\delta}^{1}\!\big[a^\delta \leftrightarrow b^\delta \cap \,\mathrm{Uni}_{\varepsilon}^{\frak c}(a^\delta,b^\delta)\big]\\
&=\sum_{\omega^{(\varepsilon)}\in \mathrm{Uni}_{\varepsilon}}
\phi_{\Omega^\delta}^{1}\!\big[\omega^{(\varepsilon)}\big]\,
\phi_{\Omega^\delta}^{1}\!\big[a^\delta \leftrightarrow b^\delta \,\big|\, \omega^{(\varepsilon)}\big]
+\phi_{\Omega^\delta}^{1}\!\big[a^\delta \leftrightarrow b^\delta \cap \,\mathrm{Uni}_{\varepsilon}^{\frak c}(a^\delta,b^\delta)\big],
\end{align*}
where $A^{\frak c}$ denotes the complement of the event $A$.

\textbf{Step 0: Rough estimate of $\phi_{\Omega^\delta}^{1}[\,a^\delta \leftrightarrow b^\delta\,]$.}
We begin with a \emph{rough} estimate of $\phi_{\Omega^\delta}^{1}[\,a^\delta \leftrightarrow b^\delta\,]$, up to constants $\asymp$ depending only on \Unif. By the quasimultiplicativity property \eqref{eq:mixing},
\begin{equation}\label{eq:rough-estimate-domain}
\phi_{\Omega^\delta}^{1}[\,a^\delta \leftrightarrow b^\delta\,]
\asymp
\pi_{1}(a^\delta,\delta,\sqrt{\varepsilon})\,
\pi_{1}(a^\delta,\sqrt{\varepsilon},r)\,
\pi_{1}(b^\delta,\delta,\sqrt{\varepsilon})\,
\pi_{1}(b^\delta,\sqrt{\varepsilon},r).
\end{equation}

\textbf{Step 1: Precise estimate of $\phi_{\Omega^\delta}^{1}[\,a^\delta \leftrightarrow b^\delta \cap \,\mathrm{Uni}_{\varepsilon}(a^\delta,b^\delta)\,]$.}
On $\omega^{(\varepsilon)}\in\mathrm{Uni}_{\varepsilon}(a^\delta,b^\delta)$, the uniqueness of the cluster connecting the annuli boundaries $\mathcal{C}_{\sqrt{\varepsilon}}(a^\delta)$ and $\mathcal{C}_{\sqrt{\varepsilon}}(b^\delta)$ implies
\[
\phi_{\Omega^\delta}^{1}\!\big[a^\delta \leftrightarrow b^\delta \,\big|\, \omega^{(\varepsilon)}\big]
=
\phi_{\Omega^\delta}^{1}\!\big[a^\delta \leftrightarrow \Gamma_\varepsilon^\delta \,\big|\, \omega^{(\varepsilon)}\big]\,
\phi_{\Omega^\delta}^{1}\!\big[b^\delta \leftrightarrow \Gamma_\varepsilon^\delta \,\big|\, \omega^{(\varepsilon)}\big],
\]
since the connections from $a^\delta$ and $b^\delta$ to $\Gamma_\varepsilon^\delta$ are conditionally independent inside $\mathcal{B}_{\sqrt{\varepsilon}}(a^\delta)$ and $\mathcal{B}_{\sqrt{\varepsilon}}(b^\delta)$. Moreover,
\[
\phi_{\Omega^\delta}^{1}\!\big[a^\delta \leftrightarrow \Gamma_\varepsilon^\delta \,\big|\, \omega^{(\varepsilon)}\big]
=
\phi_{\Omega^\delta}^{1}\!\big[\mathcal{C}_{\varepsilon}(a^\delta)\leftrightarrow \Gamma_\varepsilon^\delta \,\big|\, \omega^{(\varepsilon)}\big]\,
\phi_{\Omega^\delta}^{1}\!\big[a^\delta \leftrightarrow \Gamma_\varepsilon^\delta \,\big|\, \omega^{(\varepsilon)},\, \mathcal{C}_{\varepsilon}(a^\delta)\leftrightarrow \Gamma_\varepsilon^\delta\big].
\]
Here polynomial convergence to the I.I.C.\ and decorrelation across scales enter: choosing scales with $\delta \ll \varepsilon \ll \sqrt{\varepsilon}$, there exists $c_1>0$, depending only on \Unif, such that for $\omega^{(\varepsilon)}\in\mathrm{Uni}_{\varepsilon}$,
\[
\phi_{\Omega^\delta}^{1}\!\big[a^\delta \leftrightarrow \Gamma_\varepsilon^\delta \,\big|\, \omega^{(\varepsilon)},\, \mathcal{C}_{\varepsilon}(a^\delta)\leftrightarrow \Gamma_\varepsilon^\delta\big]
=
 \Phi^{\mathcal{C}_{\varepsilon}(a^\delta)}_{\mathrm{IIC}}\Big[ a^\delta \leftrightarrow  \infty  \Big] \,
\Big(1+O\big(\varepsilon^{c_1}\big)\Big).
\]
Note that formally speaking, in order to rigorously write this last equation, one should make a slightly more precise analysis on the convergence to the I.I.C.\,, decomposing the event the event $a^\delta \leftrightarrow \Gamma_\varepsilon^\delta$ into a \emph{local} version, where $\Gamma_\varepsilon^\delta \leftrightarrow \mathcal{C}_{\varepsilon}(a^\delta)$ inside an intermediate scale, say $\varepsilon^{2/3}$, and its complementary, which creates a four arm events between the scales $\varepsilon $ and $\varepsilon^{2/3}$ and has therefore some power in $\varepsilon$ smaller probability, preserving the announced bound.
Similarly (by translation invariance on $\cS^\delta$),
\[
\phi_{\Omega^\delta}^{1}\!\big[b^\delta \leftrightarrow \Gamma_\varepsilon^\delta \,\big|\, \omega^{(\varepsilon)},\, \mathcal{C}_{\varepsilon}(b^\delta)\leftrightarrow \Gamma_\varepsilon^\delta\big]
=
 \Phi^{\mathcal{C}_{\varepsilon}(b^\delta)}_{\mathrm{IIC}}\Big[ b^\delta \leftrightarrow  \infty  \Big]\,
\Big(1+O\big(\varepsilon^{c_1}\big)\Big).
\]
Putting together all the above statements, one has 
\begin{multline*}
	\sum\limits_{\omega^{(\varepsilon)}\in \textrm{Uni}_{\varepsilon}} \phi_{\Omega^\delta}\Big[\omega^{(\varepsilon)}\Big]\phi_{\Omega^\delta}\Big[a^\delta \leftrightarrow b^\delta \:| \:\omega^{(\varepsilon)}\Big]=\Phi^{\mathcal{C}_{\varepsilon}(a^\delta)}_{\mathrm{IIC}}\Big[a^\delta \leftrightarrow  \mathcal{C}_{\varepsilon}(a^\delta)  \Big] \Phi^{\mathcal{C}_{\varepsilon}(b^\delta)}_{\mathrm{IIC}}\Big[b^\delta \leftrightarrow  \mathcal{C}_{\varepsilon}(b^\delta)  \Big] \times \cdots \\
\cdots \times \sum\limits_{\omega^{(\varepsilon)}\in \textrm{Uni}_{\varepsilon}} \phi_{\Omega^\delta}\Big[\omega^{(\varepsilon)}\Big]\phi_{\Omega^\delta}\Big[ \mathcal{C}_{\varepsilon}(a^\delta) \leftrightarrow \Gamma_\varepsilon^\delta \: | \:\omega^{(\varepsilon)}\Big]\phi_{\Omega^\delta}\Big[ \mathcal{C}_{\varepsilon}(b^\delta) \leftrightarrow \Gamma_\varepsilon^\delta \: | \:\omega^{(\varepsilon)}\Big],
\end{multline*}
up to a multiplicative pre-factor $(1+O(\varepsilon^{c_1}))$. Moreover, using again conditional independence inside the respective balls $\mathcal{B}_{\sqrt{\varepsilon}}(a^\delta)$ and $\mathcal{B}_{\sqrt{\varepsilon}}(b^\delta)$ one gets that 
\begin{multline*}
\phi_{\Omega^\delta}^{1}\Big[ a^\delta \leftrightarrow b^\delta \cap \textrm{Uni}_{\varepsilon}(a^\delta,b^\delta) \Big]= \Phi^{\mathcal{C}_{\varepsilon}(a^\delta)}_{\mathrm{IIC}}\Big[ a^\delta \leftrightarrow  \infty  \Big]  \Phi^{\mathcal{C}_{\varepsilon}(b^\delta)}_{\mathrm{IIC}}\Big[ b^\delta \leftrightarrow  \infty  \Big] \times \cdots\\ 
\cdots \times \sum\limits_{\omega^{(\varepsilon)}\in \textrm{Uni}_{\varepsilon}} \phi_{\Omega^\delta}\Big[\omega^{(\varepsilon)}\Big]\phi_{\Omega^\delta}\Big[ \mathcal{C}_{\varepsilon}(a^\delta) \leftrightarrow \Gamma_\varepsilon^\delta \: | \:\omega^{(\varepsilon)}\Big]\phi_{\Omega^\delta}\Big[ \mathcal{C}_{\varepsilon}(b^\delta) \leftrightarrow \Gamma_\varepsilon^\delta \: | \:\omega^{(\varepsilon)}\Big]\\
=\Phi^{\mathcal{C}_{\varepsilon}(a^\delta)}_{\mathrm{IIC}}\Big[ a^\delta \leftrightarrow  \infty  \Big]  \Phi^{\mathcal{C}_{\varepsilon}(b^\delta)}_{\mathrm{IIC}}\Big[ b^\delta \leftrightarrow  \infty  \Big] \phi_{\Omega^\delta}\Big[ \mathcal{C}_{\varepsilon}(a^\delta) \leftrightarrow \mathcal{C}_{\varepsilon}(b^\delta) \cap \textrm{Uni}_{\varepsilon}(a^\delta,b^\delta)  \Big],
\end{multline*}
again up to a multiplicative pre-factor $(1+O(\varepsilon^{c_1}))$.

\textbf{Step 2: Rough estimate of $\phi_{\Omega^\delta}^{1}[\,a^\delta \leftrightarrow b^\delta\cap \,\mathrm{Uni}^{\frak c}_{\varepsilon}(a^\delta,b^\delta)\,]$.}
On $\mathrm{Uni}^{\frak c}_{\varepsilon}(a^\delta,b^\delta)$ there must be, between scales $\sqrt{\varepsilon}$ and $r$, at least two four-arm events, one around $a^\delta$ and one around $b^\delta$. By the mixing property, there is a constant (denoted $\lesssim$, depending only on \Unif) such that
\begin{equation}\label{eq:non-unic-rough-estimate}
\phi_{\Omega^\delta}^{1}\!\big[\,a^\delta \leftrightarrow b^\delta\cap \,\mathrm{Uni}^{\frak c}_{\varepsilon}(a^\delta,b^\delta)\,\big]
\;\lesssim\;
\pi_{1}(a^\delta,\delta,\sqrt{\varepsilon})\,
\pi_{4}(a^\delta,\sqrt{\varepsilon},r)\,
\pi_{1}(b^\delta,\delta,\sqrt{\varepsilon})\,
\pi_{4}(b^\delta,\sqrt{\varepsilon},r).
\end{equation}
Using the arm-separation property, four-arm events are polynomially rarer than one-arm events: there exists $c_2>0$ such that
\begin{equation}\label{eq:polynomial-improvement-1arm-4arm}
\pi_{4}(a^\delta,\sqrt{\varepsilon},r)\;\le\; c_2\Big(\frac{\sqrt{\varepsilon}}{r}\Big)^{c_2}\,\pi_{1}(a^\delta,\sqrt{\varepsilon},r),
\end{equation}
and similarly for $b^\delta$. Combining \eqref{eq:rough-estimate-domain}, \eqref{eq:non-unic-rough-estimate}, and \eqref{eq:polynomial-improvement-1arm-4arm} yields
\[
\phi_{\Omega^\delta}^{1}\!\big[\,a^\delta \leftrightarrow b^\delta\,\big]
=
\Phi^{\mathcal{C}_{\varepsilon}(a^\delta)}_{\mathrm{IIC}}\!\big[a^\delta \leftrightarrow \infty\big]\,
\Phi^{\mathcal{C}_{\varepsilon}(b^\delta)}_{\mathrm{IIC}}\!\big[b^\delta \leftrightarrow \infty\big]\,
\phi_{\Omega^\delta}^{1}\!\big[\,\mathcal{C}_{\varepsilon}(a^\delta) \leftrightarrow \mathcal{C}_{\varepsilon}(b^\delta)\cap \,
\mathrm{Uni}_{\varepsilon}(a^\delta,b^\delta)\big],
\]
up to a prefactor $(1+O(\varepsilon^{c_1}))(1+O((\sqrt{\varepsilon}/r)^{c_2}))$ (for $c_1,c_2>0$ depending only on \Unif). Moreover, adapting the previous arguments,
\[
\phi_{\Omega^\delta}^{1}\!\big[\,\mathcal{C}_{\varepsilon}(a^\delta) \leftrightarrow \mathcal{C}_{\varepsilon}(b^\delta)\cap \,
\mathrm{Uni}_{\varepsilon}(a^\delta,b^\delta)\big]
=
\phi_{\Omega^\delta}^{1}\!\big[\,\mathcal{C}_{\varepsilon}(a^\delta) \leftrightarrow \mathcal{C}_{\varepsilon}(b^\delta)\big]\,
\big(1+O(\varepsilon^{c_3})\big)
\]
for some $c_3>0$. Choosing $c>0$ small and absorbing constants, we obtain the desired bound with overall prefactor $(1+O(\varepsilon^{c}))(1+O((\sqrt{\varepsilon}/r)^{c}))$.
	\begin{figure}
\hskip -0.01\textwidth\begin{minipage}{0.49\textwidth}
\includegraphics[clip, width=1\textwidth]{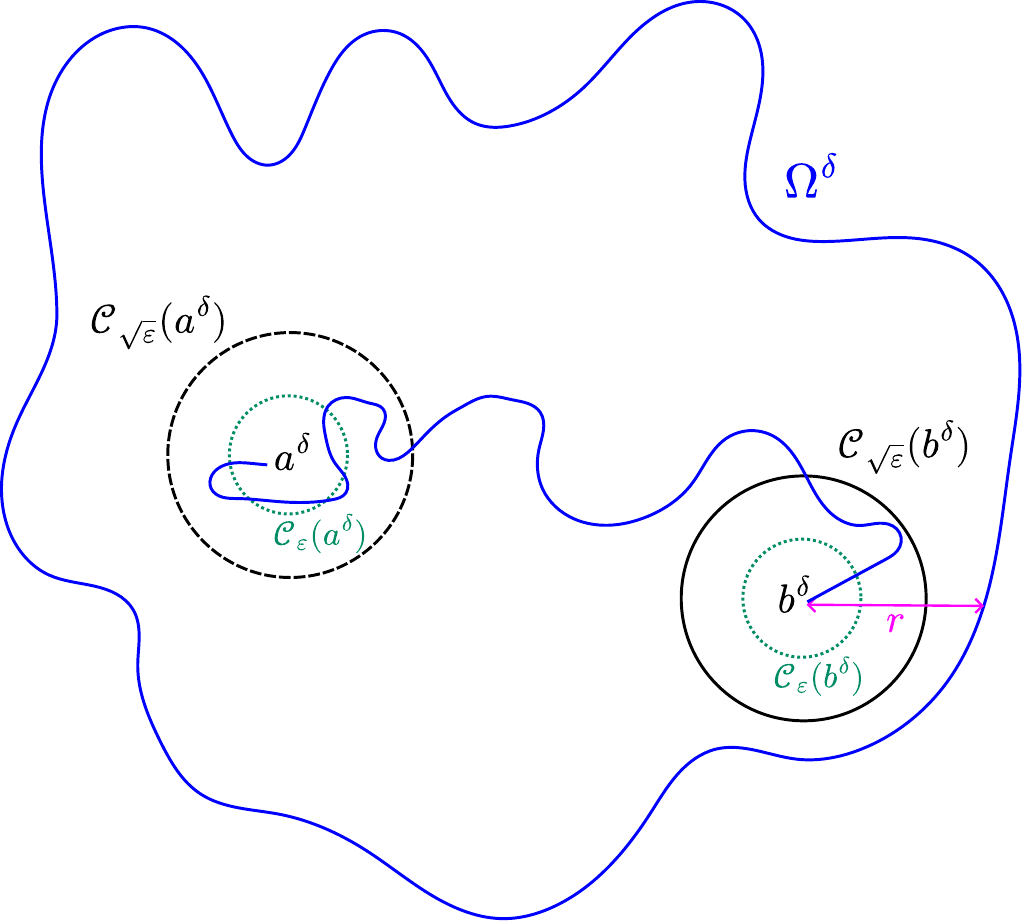}
\end{minipage}\hskip 0.01\textwidth \begin{minipage}{0.49\textwidth}
\includegraphics[clip, width=0.8\textwidth]{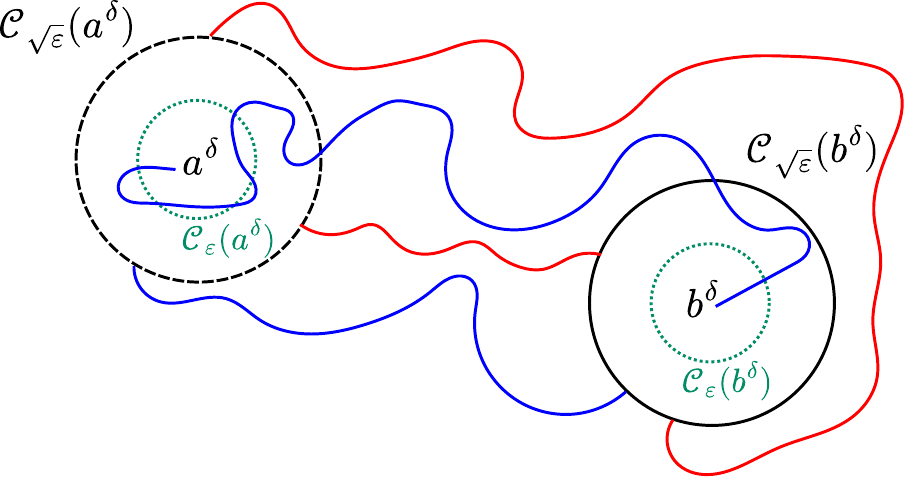}
\end{minipage}
\caption{(Left) Notations used within the proof. (Right) Configuration where $\textrm{Uni}_{\varepsilon}(a^\delta,b^\delta)$ inducing two four-arm events (centred respectively at $a^\delta$ and $b^\delta$) between the distances $\sqrt{\varepsilon}$ and $r$. This events are polynomially more unlikely compared with one-arm events at the same scales.}
\end{figure}
\end{proof}

\begin{theo}\label{thm:convergence-ratios}
	Fix $\varepsilon>0$ and four faces $a^\delta,b^\delta,c^\delta,d^\delta \cS^\delta$ \emph{of the same type}, approximating respectively the points $a,b,c,d$, at a definite distance $r>10\varepsilon$ from each other and from the boundary.  The uniformly on compacts of $\Omega$ and the distance between the points $a,b,c,d$, one has  
	\begin{equation*}
		\frac{\phi^{1}_{\Omega^\delta}[a^\delta \longleftrightarrow b^\delta]}{\phi^{1}_{\Omega^\delta}[c^\delta \longleftrightarrow d^\delta]} \underset{\delta \to 0}{\longrightarrow} \frac{\langle \sigma_a \sigma_b \rangle_{\Omega}^{(\mathrm{w})}}{\langle \sigma_c \sigma_d \rangle_{\Omega}^{(\mathrm{w})}},
	\end{equation*}
where the correlation function is defined in Section \ref{sub:correlation} and is the same as the square lattice.
\end{theo}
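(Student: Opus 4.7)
The plan is to combine Lemma~\ref{lem:IIC-writing} with the CLE convergence of Theorem~\ref{thm:CLE-convergence}, using the critical square lattice as a reference where the limiting correlations are already known by \cite{CHI}. The crucial point is that the I.I.C.\ renormalisation factors produced by Lemma~\ref{lem:IIC-writing} depend only on the face type, so for four faces of a common type they cancel in the ratio, leaving behind purely macroscopic quantities accessible through the CLE scaling limit. Concretely, applying Lemma~\ref{lem:IIC-writing} to $(a^\delta,b^\delta)$ and $(c^\delta,d^\delta)$ and using translation invariance of the full-plane I.I.C.\ measures on $\cS^\delta$, we obtain
\begin{equation*}
	\frac{\phi^1_{\Omega^\delta}[a^\delta\leftrightarrow b^\delta]}{\phi^1_{\Omega^\delta}[c^\delta\leftrightarrow d^\delta]}
	= \frac{\phi^1_{\Omega^\delta}[\mathcal{C}_\varepsilon(a^\delta)\leftrightarrow \mathcal{C}_\varepsilon(b^\delta)]}{\phi^1_{\Omega^\delta}[\mathcal{C}_\varepsilon(c^\delta)\leftrightarrow \mathcal{C}_\varepsilon(d^\delta)]} \cdot \bigl(1+O(\varepsilon^{c})\bigr)\bigl(1+O\bigl((\sqrt{\varepsilon}/r)^{c}\bigr)\bigr),
\end{equation*}
uniformly in $\delta$ for $\delta \ll \varepsilon \ll \sqrt{\varepsilon} \leq r/100$, where $r$ is the minimal pairwise distance.

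Next, for each fixed $\varepsilon>0$, the event that $\mathcal{C}_\varepsilon(a^\delta)$ and $\mathcal{C}_\varepsilon(b^\delta)$ lie in a common primal cluster is macroscopic, corresponding in the scaling limit to the two discrete circles being enclosed by a common outermost loop or joined through the CLE gasket. By Theorem~\ref{thm:CLE-convergence}, the FK loop ensemble on $\Omega^\delta$ converges in the $d_\mathcal{X}$ topology to $\mathrm{CLE}_\Omega(16/3)$, and this event is a continuity point of the limiting law since CLE$(16/3)$ loops almost surely avoid any fixed analytic circle. Hence both numerator and denominator converge as $\delta\to 0$ to explicit $\mathrm{CLE}(16/3)$ circle-to-circle connection probabilities on $\Omega$, which depend only on the continuous data $(\Omega,a,b,c,d,\varepsilon)$ and in particular not on the underlying lattice.

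To identify this CLE ratio with the ratio of continuous correlation functions, we repeat the same reasoning on the critical square lattice $\mathbb{Z}^2_\delta$, where Theorem~\ref{thm:CLE-convergence} also applies (this is the original case of \cite{KemSmi2}) and where faces are automatically all of one type. By the Edwards--Sokal coupling the square-lattice ratio of connection probabilities equals the ratio of spin expectations, and by \cite{CHI} the latter converges to $\langle\sigma_a\sigma_b\rangle^{(\mathrm{w})}_\Omega / \langle\sigma_c\sigma_d\rangle^{(\mathrm{w})}_\Omega$. Comparing the two implementations, the common CLE limit must equal this correlation ratio, up to the $\varepsilon$-error, which is removed by finally letting $\varepsilon\to 0$. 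This route deliberately avoids any direct CLE computation, since the same unknown CLE quantity is computed implicitly on both sides.

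The main obstacle is to justify the interchange of the two limits $\delta\to 0$ and $\varepsilon\to 0$: one must ensure that the multiplicative error in Lemma~\ref{lem:IIC-writing} is uniform in the lattice, so that the same $\varepsilon$-bound applies simultaneously on $\cS^\delta$ and on $\mathbb{Z}^2_\delta$. This is guaranteed by Theorem~\ref{thm:SuperStrongRSW} together with the universal quasimultiplicativity of arm events~\eqref{eq:mixing}, both of which hold with constants depending only on \Unif. A secondary but routine point is the verification that the circle-to-circle event is a continuity set for $\mathrm{CLE}(16/3)$, which follows from standard measurability arguments in the $d_\mathcal{X}$ topology and from the almost sure transversal crossing of any fixed analytic circle by CLE loops.
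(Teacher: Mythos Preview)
Your proposal is correct and follows essentially the same approach as the paper: apply Lemma~\ref{lem:IIC-writing} to both pairs, cancel the I.I.C.\ factors by translation invariance (same face type), pass to the CLE limit via Theorem~\ref{thm:CLE-convergence}, and identify the resulting CLE ratio by running the identical argument on the critical square lattice where \cite{CHI} is available, then send $\varepsilon\to 0$. The paper organises the final step slightly more explicitly via separate $\limsup$ and $\liminf$ bounds, but the content is the same.
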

\begin{proof}
This proof combines Lemma~\ref{lem:lower-bound-crossing} with a simple but crucial observation: the known convergence on the square lattice, obtained by completely different methods. Fix $\varepsilon>0$ and scales $\delta \ll \varepsilon \le \sqrt{\varepsilon} \le r/100$. Applying Lemma~\ref{lem:IIC-writing}, we can write, up to an error of order $(1+O(\varepsilon^{c}))(1+O((\sqrt{\varepsilon}/r)^{c}))$,
\begin{equation*}
  \frac{\phi_{\Omega^\delta}^{1}[\,a^\delta \leftrightarrow b^\delta\,]}{\phi_{\Omega^\delta}^{1}[\,c^\delta \leftrightarrow d^\delta\,]}
  =
  \frac{\phi_{\Omega^\delta}^{1}[\,\mathcal{C}_{\varepsilon}(a^\delta) \leftrightarrow \mathcal{C}_{\varepsilon}(b^\delta)\,]
  \,\Phi^{\mathcal{C}_{\varepsilon}(a^\delta)}_{\mathrm{IIC}}[\,a^\delta \leftrightarrow \infty\,]
  \,\Phi^{\mathcal{C}_{\varepsilon}(b^\delta)}_{\mathrm{IIC}}[\,b^\delta \leftrightarrow \infty\,]}
  {\phi_{\Omega^\delta}^{1}[\,\mathcal{C}_{\varepsilon}(c^\delta) \leftrightarrow \mathcal{C}_{\varepsilon}(d^\delta)\,]
  \,\Phi^{\mathcal{C}_{\varepsilon}(c^\delta)}_{\mathrm{IIC}}[\,c^\delta \leftrightarrow \infty\,]
  \,\Phi^{\mathcal{C}_{\varepsilon}(d^\delta)}_{\mathrm{IIC}}[\,d^\delta \leftrightarrow \infty\,]}.
\end{equation*}
Since the faces $a^\delta,b^\delta,c^\delta,d^\delta$ are of the same type, the uniqueness of the I.I.C.\ measure and translation invariance of $\cS^\delta$ imply that the $\mathbb{P}_{\mathrm{IIC}}$ terms cancel, giving
\begin{equation}\label{eq:convergence-eq1}
  \frac{\phi_{\Omega^\delta}^{1}[\,a^\delta \leftrightarrow b^\delta\,]}{\phi_{\Omega^\delta}^{1}[\,c^\delta \leftrightarrow d^\delta\,]}
  =
  \frac{\phi_{\Omega^\delta}^{1}[\,\mathcal{C}_{\varepsilon}(a^\delta) \leftrightarrow \mathcal{C}_{\varepsilon}(b^\delta)\,]}
  {\phi_{\Omega^\delta}^{1}[\,\mathcal{C}_{\varepsilon}(c^\delta) \leftrightarrow \mathcal{C}_{\varepsilon}(d^\delta)\,]}
  \big((1+O(\varepsilon^{c}))(1+O((\sqrt{\varepsilon}/r)^{c}))\big).
\end{equation}
Taking $\delta \to 0$, and using CLE convergence (Theorem~\ref{thm:CLE-convergence}) together with the fact that discrete circles $\mathcal{C}_{\varepsilon}(\cdot)$ are small but macroscopic, we obtain
\begin{equation*}
	\limsup_{\delta \to 0}
	\frac{\phi_{\Omega^\delta}^{1}[\,a^\delta \leftrightarrow b^\delta\,]}
	{\phi_{\Omega^\delta}^{1}[\,c^\delta \leftrightarrow d^\delta\,]}
	\le
	\frac{\mathbb{P}^{\mathrm{CLE}}_{\Omega}[\,\mathcal{C}_{\varepsilon}(a) \leftrightarrow \mathcal{C}_{\varepsilon}(b)\,]}
	{\mathbb{P}^{\mathrm{CLE}}_{\Omega}[\,\mathcal{C}_{\varepsilon}(c) \leftrightarrow \mathcal{C}_{\varepsilon}(d)\,]}
	\big((1+O(\varepsilon^{c}))(1+O((\sqrt{\varepsilon}/r)^{c}))\big).
\end{equation*}
The remaining challenge is to estimate CLE connection probabilities in the small-$\varepsilon$ regime. To circumvent this difficulty, we apply \eqref{eq:convergence-eq1} in the special case where $\cS^\delta$ is the critical square lattice. Sending $\delta\to0$, it follows from \cite[Theorem~1.2]{CHI} that
\begin{equation*}
	\lim_{\delta \to 0}
	\frac{\phi_{\Omega^\delta}^{1}[\,a^\delta \leftrightarrow b^\delta\,]}
	{\phi_{\Omega^\delta}^{1}[\,c^\delta \leftrightarrow d^\delta\,]}
	=
	\frac{\langle \sigma_a \sigma_b \rangle_{\Omega}^{(\mathrm{w})}}
	{\langle \sigma_c \sigma_d \rangle_{\Omega}^{(\mathrm{w})}},
\end{equation*}
while CLE convergence gives
\begin{equation*}
\lim_{\delta \to 0}
\frac{\phi_{\Omega^\delta}^{1}[\,\mathcal{C}_{\varepsilon}(a^\delta) \leftrightarrow \mathcal{C}_{\varepsilon}(b^\delta)\,]}
{\phi_{\Omega^\delta}^{1}[\,\mathcal{C}_{\varepsilon}(c^\delta) \leftrightarrow \mathcal{C}_{\varepsilon}(d^\delta)\,]}
=
\frac{\mathbb{P}^{\mathrm{CLE}}_{\Omega}[\,\mathcal{C}_{\varepsilon}(a) \leftrightarrow \mathcal{C}_{\varepsilon}(b)\,]}
{\mathbb{P}^{\mathrm{CLE}}_{\Omega}[\,\mathcal{C}_{\varepsilon}(c) \leftrightarrow \mathcal{C}_{\varepsilon}(d)\,]}.
\end{equation*}
Note that formally, one should additionally check that in continuum, a.s.\, any CLE loop surrounding one of the continuous balls of size $\varepsilon$ centred at $a$ or $b$ doesn't touch the boundary the ball. 

Substituting these limits into \eqref{eq:convergence-eq1} and taking $\varepsilon>0$ small gives
\begin{equation*}
	\limsup_{\delta \to 0}
	\frac{\phi_{\Omega^\delta}^{1}[\,a^\delta \leftrightarrow b^\delta\,]}
	{\phi_{\Omega^\delta}^{1}[\,c^\delta \leftrightarrow d^\delta\,]}
	\le
	\frac{\langle \sigma_a \sigma_b \rangle_{\Omega}^{(\mathrm{w})}}
	{\langle \sigma_c \sigma_d \rangle_{\Omega}^{(\mathrm{w})}}
	\big((1+O(\varepsilon^{c}))(1+O((\sqrt{\varepsilon}/r)^{c}))\big).
\end{equation*}

A symmetric argument holds with $\liminf_{\delta\to0}$ in place of $\limsup_{\delta\to0}$. Finally, letting $\varepsilon\to0$ yields the claimed convergence.
\end{proof}
We are now in position to prove the main result of the article.
\begin{proof}[Proof of Theorem \ref{thm:Correlation-function}]
	We first treat the case where $a^\delta,b^\delta,c^\delta,d^\delta$ are all of the same type. In this setting, one recovers the framework of \cite[Section~2.8]{CHI} and can \emph{fuse} the faces $c^\delta$ and $d^\delta$ into a single limiting point $e$ inside the domain, located at a fixed positive distance from $a$, $b$, and the boundary. The generalisation of \cite[Lemma~2.25]{CHI} follows from standard RSW-type arguments as $c^\delta$ and $d^\delta$ approach each other. Consequently, by the same reasoning as in \cite[Proof of Theorem~1.1]{CHI}, one obtains
\begin{equation*}
	\rho(\delta)^{-1}\,\phi_{\Omega^\delta}^{1}\big[\,a^\delta \leftrightarrow b^\delta\,\big]
	\longrightarrow
	\langle \sigma_a \sigma_b \rangle_{\Omega}^{+},
\end{equation*}
where
\begin{equation*}
	\rho(\delta)
	:=
	\phi_{\cS^\delta}\big[\,a^\delta \leftrightarrow (a+1)^\delta\,\big],
\end{equation*}
and $(a+1)^\delta$ has the \emph{same} type as $a^\delta$, approximating the point $a+1$ on $\cS^\delta$ up to $O(\delta)$. Finally, using the convergence of FK interfaces to SLE$(16/3)$ together with \cite{wu2018polychromatic,garban2020convergence}, one deduces that the full-plane one-arm exponent equals $\tfrac{1}{8}$, implying via standard RSW arguments that $\rho(\delta)=\delta^{-1/4+o(1)}$. When the faces are of different types, the same reasoning applies, except that in Theorem~\ref{thm:convergence-ratios}, the factors $\Phi^{\mathcal{C}_{\varepsilon}(g^\delta)}_{\mathrm{IIC}}[\,g^\delta \leftrightarrow \infty\,]$ for $g^\delta\in\{a^\delta,b^\delta,c^\delta,d^\delta\}$ no longer cancel, while the ratio is bounded away from $0$ and $\infty$, leading to the (a priori) non-trivial multiplicative prefactors stated in the theorem.
\end{proof}

\printbibliography

@incollection{schramm2011scaling,
  title={On the scaling limits of planar percolation},
  author={Schramm*, Oded and Smirnov, Stanislav and Garban, Christophe},
  booktitle={Selected Works of Oded Schramm},
  pages={1193--1247},
  year={2011},
  publisher={Springer}
}

@article{CLR1,
 author = {Chelkak, Dmitry and Laslier, Beno{\^{\i}}t and Russkikh, Marianna},
 title = {Dimer model and holomorphic functions on t-embeddings of planar graphs},
 fjournal = {Proceedings of the London Mathematical Society. Third Series},
 journal = {Proc. Lond. Math. Soc. (3)},
 issn = {0024-6115},
 volume = {126},
 number = {5},
 pages = {1656--1739},
 year = {2023},
 language = {English},
 doi = {10.1112/plms.12516},
 keywords = {82B20,82B27,30G25},
 zbMATH = {7740458},
 Zbl = {1522.82008}
}

@article{KLRR,
 author = {Kenyon, Richard and Lam, Wai Yeung and Ramassamy, Sanjay and Russkikh, Marianna},
 title = {Dimers and circle patterns},
 fjournal = {Annales Scientifiques de l'{\'E}cole Normale Sup{\'e}rieure. Quatri{\`e}me S{\'e}rie},
 journal = {Ann. Sci. {\'E}c. Norm. Sup{\'e}r. (4)},
 issn = {0012-9593},
 volume = {55},
 number = {3},
 pages = {865--903},
 year = {2022},
 language = {English},
 doi = {10.24033/asens.2507},
 keywords = {82B20,52C26},
 zbMATH = {7594372},
 Zbl = {1497.82007}
}

@article{SBRN76,
  title={Thermally driven phase transitions near the percolation threshold in two dimensions},
  author={Stanley, HE and Birgeneau, RJ and Reynolds, PJ and Nicoll, JF},
  journal={Journal of Physics C: solid State Physics},
  volume={9},
  number={20},
  pages={L553},
  year={1976},
  publisher={IOP Publishing}
}

@article{wu2018polychromatic,
  title={Polychromatic arm exponents for the critical planar FK-Ising model},
  author={Wu, Hao},
  journal={Journal of Statistical Physics},
  volume={170},
  number={6},
  pages={1177--1196},
  year={2018},
  publisher={Springer}
}

@article{garban2020convergence,
  title={On the Convergence of FK--Ising Percolation to SLE (16/3,(16/3)-6)},
  author={Garban, Christophe and Wu, Hao},
  journal={Journal of Theoretical Probability},
  volume={33},
  number={2},
  pages={828--865},
  year={2020},
  publisher={Springer}
}

@article{garban2013pivotal,
  title={Pivotal, cluster, and interface measures for critical planar percolation},
  author={Garban, Christophe and Pete, G{\'a}bor and Schramm, Oded},
  journal={Journal of the American Mathematical Society},
  volume={26},
  number={4},
  pages={939--1024},
  year={2013}
}

@article{curie1894possibilite,
  title={Sur la possibilit{\'e} d'existence de la conductibilit{\'e} magn{\'e}tique et du magn{\'e}tisme libre},
  author={Curie, Pierre},
  journal={Journal de Physique Th{\'e}orique et Appliqu{\'e}e},
  volume={3},
  number={1},
  pages={415--417},
  year={1894},
  publisher={Soci{\'e}t{\'e} Fran{\c{c}}aise de Physique}
}

@article{kesten1986incipient,
  title={The incipient infinite cluster in two-dimensional percolation},
  author={Kesten, Harry},
  journal={Probability theory and related fields},
  volume={73},
  number={3},
  pages={369--394},
  year={1986},
  publisher={Springer}
}

@phdthesis{oulamara2022random,
  title={Random geometry and free energy of critical planar lattice models},
  author={Oulamara, Mendes},
  year={2022},
  school={Universit{\'e} Paris-Saclay}
}

@article{ising1925beitrag,
  title={Beitrag zur theorie des ferromagnetismus},
  author={Ising, Ernst},
  journal={Zeitschrift f{\"u}r Physik},
  volume={31},
  number={1},
  pages={253--258},
  year={1925},
  publisher={Springer}
}

@article{grimmett2013universality,
  title={Universality for bond percolation in two dimensions},
  author={Grimmett, Geoffrey R and Manolescu, Ioan},
  journal={The Annals of Probability},
  pages={3261--3283},
  year={2013},
  publisher={JSTOR}
}

@article{grimmett2014bond,
  title={Bond percolation on isoradial graphs: criticality and universality},
  author={Grimmett, Geoffrey R and Manolescu, Ioan},
  journal={Probability Theory and Related Fields},
  volume={159},
  number={1},
  pages={273--327},
  year={2014},
  publisher={Springer}
}

@article{li2012critical,
  title={Critical temperature of periodic Ising models},
  author={Li, Zhongyang},
  journal={Communications in Mathematical Physics},
  volume={315},
  number={2},
  pages={337--381},
  year={2012},
  publisher={Springer}
}

@article{du2024sharp,
  title={Sharp asymptotics for arm probabilities in critical planar percolation},
  author={Du, Hang and Gao, Yifan and Li, Xinyi and Zhuang, Zijie},
  journal={Communications in Mathematical Physics},
  volume={405},
  number={8},
  pages={182},
  year={2024},
  publisher={Springer}
}

@article{binder2024power,
  title={Power rate of convergence of discrete curves: framework and applications},
  author={Binder, Ilia and Richards, Larissa},
  journal={arXiv preprint arXiv:2407.10243},
  year={2024}
}

@article{boutillier2011critical,
  title={The critical Z-invariant Ising model via dimers: locality property},
  author={Boutillier, C{\'e}dric and De Tili{\`e}re, B{\'e}atrice},
  journal={Communications in mathematical physics},
  volume={301},
  number={2},
  pages={473--516},
  year={2011},
  publisher={Springer}
}

@article{DumLis,
  title={On the double random current nesting field},
  author={Duminil-Copin, Hugo and Lis, Marcin},
  journal={Probability Theory and Related Fields},
  volume={175},
  pages={937--955},
  year={2019},
  publisher={Springer}
}

@article{Mah23,
  title={Crossing estimates for the Ising model on general s-embeddings},
  author={Mahfouf, R{\'e}my},
  journal={Proceedings of the London Mathematical Society},
  volume={131},
  number={4},
  pages={e70092},
  year={2025},
  publisher={Wiley Online Library}
}

@article{dotsenko1983critical,
  title={Critical behaviour of the phase transition in the 2D Ising model with impurities},
  author={Dotsenko, Viktor S and Dotsenko, Vladimir S},
  journal={Advances in Physics},
  volume={32},
  number={2},
  pages={129--172},
  year={1983},
  publisher={Taylor \& Francis}
}

@article{Zam,
  title={Infinite conformal symmetry of critical fluctuations in two dimensions},
  author={Belavin, Alexander A and Polyakov, Alexander M and Zamolodchikov, Alexander B},
  journal={Journal of Statistical Physics},
  volume={34},
  pages={763--774},
  year={1984},
  publisher={Springer}
}

@article{CDH,
 author = {Chelkak, Dmitry and Duminil-Copin, Hugo and Hongler, Cl{\'e}ment},
 title = {Crossing probabilities in topological rectangles for the critical planar {FK}-{Ising} model},
 fjournal = {Electronic Journal of Probability},
 journal = {Electron. J. Probab.},
 issn = {1083-6489},
 volume = {21},
 pages = {28},
 note = {Id/No 5},
 year = {2016},
 language = {English},
 doi = {10.1214/16-EJP3452},
 keywords = {60K35,82B20,82B26},
 zbMATH = {6583728},
 Zbl = {1341.60124}
}

@article{DCHN,
  title={Connection probabilities and RSW-type bounds for the two-dimensional FK Ising model},
  author={Duminil-Copin, Hugo and Hongler, Cl{\'e}ment and Nolin, Pierre},
  journal={Communications on pure and applied mathematics},
  volume={64},
  number={9},
  pages={1165--1198},
  year={2011},
  publisher={Wiley Online Library}
}

@book {Baxter-book,
    AUTHOR = {Baxter, Rodney J.},
     TITLE = {Exactly solved models in statistical mechanics},
      NOTE = {Reprint of the 1982 original},
 PUBLISHER = {Academic Press, Inc. [Harcourt Brace Jovanovich, Publishers],
              London},
      YEAR = {1989},
     PAGES = {xii+486},
      ISBN = {0-12-083182-1},
   MRCLASS = {82-02 (82A05 82A69)},
  MRNUMBER = {998375},
}

@article{CHI-mixed,
  title={Correlations of primary fields in the critical Ising model},
  author={Chelkak, Dmitry and Hongler, Cl{\'e}ment and Izyurov, Konstantin},
  journal={arXiv preprint arXiv:2103.10263},
  year={2021}
}

@article{MahPar25a,
  title={From the planar Ising model to quasiconformal mappings},
  author={Rémy Mahfouf and Sung Chul Park},
  year={In preparation}
}

@article{CHM-zig-zag-Ising,
  title={Magnetization in the zig-zag layered Ising model and orthogonal polynomials},
  author={Chelkak, Dmitry and Hongler, Cl{\'e}ment and Mahfouf, R{\'e}my},
  booktitle={Annales de l'Institut Fourier},
  volume={74},
  number={6},
  pages={2275--2330},
  year={2024}
}

@book{McWu-Ising,
  title={The two-dimensional Ising model},
  author={McCoy, Barry M and Wu, Tai Tsun},
  year={2014},
  publisher={Courier Corporation}
}

@article {HS-energy-Ising,
    AUTHOR = {Hongler, Cl{\'e}ment and Smirnov, Stanislav},
     TITLE = {The energy density in the planar {I}sing model},
   JOURNAL = {Acta Math.},
  FJOURNAL = {Acta Mathematica},
    VOLUME = {211},
      YEAR = {2013},
    NUMBER = {2},
     PAGES = {191--225},
      ISSN = {0001-5962},
   MRCLASS = {82B20},
  MRNUMBER = {3143889},
MRREVIEWER = {Xifeng Su},
       DOI = {10.1007/s11511-013-0102-1},
       URL = {http://dx.doi.org/10.1007/s11511-013-0102-1},
}

@article {Smirnov-conformal,
    AUTHOR = {Smirnov, Stanislav},
     TITLE = {Conformal invariance in random cluster models. {I}.
              {H}olomorphic fermions in the {I}sing model},
   JOURNAL = {Ann. of Math. (2)},
  FJOURNAL = {Annals of Mathematics. Second Series},
    VOLUME = {172},
      YEAR = {2010},
    NUMBER = {2},
     PAGES = {1435--1467},
      ISSN = {0003-486X},
     CODEN = {ANMAAH},
   MRCLASS = {60K35 (30G25 60J67 81T40 82B20)},
  MRNUMBER = {2680496},
MRREVIEWER = {Roland M. Friedrich},
       DOI = {10.4007/annals.2010.172.1441},
       URL = {http://dx.doi.org/10.4007/annals.2010.172.1441},
}

@article{Che20,
 author = {Chelkak, Dmitry},
 title = {Ising model and s-embeddings of planar graphs},
 fjournal = {Annales Scientifiques de l'{\'E}cole Normale Sup{\'e}rieure. Quatri{\`e}me S{\'e}rie},
 journal = {Ann. Sci. {\'E}c. Norm. Sup{\'e}r. (4)},
 issn = {0012-9593},
 volume = {57},
 number = {5},
 pages = {1271--1346},
 year = {2024},
 language = {English},
 doi = {10.24033/asens.2591},
 keywords = {82B20,30G25,60J67,81T40},
 zbMATH = {8011222}
}

@article{sato1979holonomic,
  title={Holonomic quantum fields. IV},
  author={Sato, Mikio and Miwa, Tetsuji and Jimbo, Michio},
  journal={Publications of the Research Institute for Mathematical Sciences},
  volume={15},
  number={3},
  pages={871--972},
  year={1979},
  publisher={Research Institute forMathematical Sciences}
}

@article{mccoy1977painleve,
  title={Painlev{\'e} functions of the third kind},
  author={McCoy, Barry M and Tracy, Craig A and Wu, Tai Tsun},
  journal={Journal of Mathematical Physics},
  volume={18},
  number={5},
  pages={1058--1092},
  year={1977},
  publisher={American Institute of Physics}
}

@article{Zam2,
  title={Exact solutions of conformal field theory in two dimensions and critical phenomena},
  author={Zamolodchikov, AB},
  journal={Reviews in Mathematical Physics},
  volume={1},
  number={02n03},
  pages={197--234},
  year={1989},
  publisher={World Scientific}
}

@phdthesis{HonPHD,
	author = "Hongler, Cl\'ement",
	school = "University of Geneva",
	title = "{Conformal invariance of Ising model correlations}",
	year = "2010"
}

@phdthesis{MahPHD,
  title={Analyse du mod{\`e}le d'Ising planaire au-del{\`a} du cadre critique Z-invariant},
  author={Mahfouf, R{\'e}my},
  year={2022},
  school={Universit{\'e} Paris-Saclay}
}

@article{HonglerSmirnov,
    AUTHOR = {Hongler, Cl\'{e}ment and Smirnov, Stanislav},
     TITLE = {The energy density in the planar {I}sing model},
   JOURNAL = {Acta Math.},
  FJOURNAL = {Acta Mathematica},
    VOLUME = {211},
      YEAR = {2013},
    NUMBER = {2},
     PAGES = {191--225},
      ISSN = {0001-5962},
   MRCLASS = {82B20},
  MRNUMBER = {3143889},
MRREVIEWER = {Xifeng Su},
       DOI = {10.1007/s11511-013-0102-1},
       URL = {https://doi.org/10.1007/s11511-013-0102-1},
}

@article{KenyonSchlenker,
    AUTHOR = {Kenyon, Richard and Schlenker, Jean-Marc},
     TITLE = {Rhombic embeddings of planar quad-graphs},
   JOURNAL = {Trans. Amer. Math. Soc.},
  FJOURNAL = {Transactions of the American Mathematical Society},
    VOLUME = {357},
      YEAR = {2005},
    NUMBER = {9},
     PAGES = {3443--3458},
      ISSN = {0002-9947},
   MRCLASS = {05C10 (52C99 57M15)},
  MRNUMBER = {2146632},
MRREVIEWER = {Jozef \v{S}ir\'{a}\v{n}},
       DOI = {10.1090/S0002-9947-04-03545-7},
       URL = {https://doi.org/10.1090/S0002-9947-04-03545-7},
}

@article{park2018massive,
       author = {{Park}, S.~C.},
        title = {Massive Scaling Limit of the {I}sing Model: Subcritical Analysis and Isomonodromy},
      journal = {arXiv e-prints},
     keywords = {Mathematics - Probability, Mathematical Physics},
         year = 2018,
        month = nov,
          eid = {arXiv:1811.06636},
        pages = {arXiv:1811.06636},
archivePrefix = {arXiv},
       eprint = {1811.06636},
 primaryClass = {math.PR},
       adsurl = {https://ui.adsabs.harvard.edu/abs/2018arXiv181106636P},
      adsnote = {Provided by the SAO/NASA Astrophysics Data System}
}

@article{Smirnov_Ising,
    AUTHOR = {Smirnov, Stanislav},
     TITLE = {Conformal invariance in random cluster models. {I}.
              {H}olomorphic fermions in the {I}sing model},
   JOURNAL = {Ann. of Math. (2)},
  FJOURNAL = {Annals of Mathematics. Second Series},
    VOLUME = {172},
      YEAR = {2010},
    NUMBER = {2},
     PAGES = {1435--1467},
      ISSN = {0003-486X},
   MRCLASS = {60K35 (30G25 60J67 81T40 82B20)},
  MRNUMBER = {2680496},
MRREVIEWER = {Roland M. Friedrich},
       DOI = {10.4007/annals.2010.172.1441},
       URL = {https://doi.org/10.4007/annals.2010.172.1441},
}

@book{mccoy-wu-book,
    AUTHOR = {McCoy, Barry M. and Wu, Tai Tsun},
     TITLE = {The two-dimensional {I}sing model},
   EDITION = {Second},
      NOTE = {Corrected reprint of [ MR3618829], with a new preface and a
              new chapter (Chapter XVII)},
 PUBLISHER = {Dover Publications, Inc., Mineola, NY},
      YEAR = {2014},
     PAGES = {xvi+454},
      ISBN = {978-0-486-49335-0; 0-486-49335-0},
   MRCLASS = {82B20 (82C20)},
  MRNUMBER = {3838431},
}

@article {mercat-CMP,
    AUTHOR = {Mercat, Christian},
     TITLE = {Discrete {R}iemann surfaces and the {I}sing model},
   JOURNAL = {Comm. Math. Phys.},
  FJOURNAL = {Communications in Mathematical Physics},
    VOLUME = {218},
      YEAR = {2001},
    NUMBER = {1},
     PAGES = {177--216},
      ISSN = {0010-3616},
     CODEN = {CMPHAY},
   MRCLASS = {82B20 (39A12 58J90)},
  MRNUMBER = {1824204 (2002c:82019)},
MRREVIEWER = {Richard Kenyon},
       DOI = {10.1007/s002200000348},
       URL = {http://dx.doi.org/10.1007/s002200000348},
}

@book {friedli-velenik-book,
    AUTHOR = {Friedli, S. and Velenik, Y.},
     TITLE = {Statistical mechanics of lattice systems},
      NOTE = {A concrete mathematical introduction},
 PUBLISHER = {Cambridge University Press, Cambridge},
      YEAR = {2018},
     PAGES = {xix+622},
      ISBN = {978-1-107-18482-4},
   MRCLASS = {82-01 (82B05)},
  MRNUMBER = {3752129},
}

@article {cimasoni-duminil,
    AUTHOR = {Cimasoni, David and Duminil-Copin, Hugo},
     TITLE = {The critical temperature for the {I}sing model on planar
              doubly periodic graphs},
   JOURNAL = {Electron. J. Probab.},
  FJOURNAL = {Electronic Journal of Probability},
    VOLUME = {18},
      YEAR = {2013},
     PAGES = {no. 44, 18},
      ISSN = {1083-6489},
   MRCLASS = {82B20 (82B27)},
  MRNUMBER = {3040554},
MRREVIEWER = {Sven Bachmann},
       DOI = {10.1214/EJP.v18-2352},
       URL = {https://doi.org/10.1214/EJP.v18-2352},
}

@article{CCK,
  title={Revisiting the combinatorics of the 2D Ising model},
  author={Chelkak, Dmitry and Cimasoni, David and Kassel, Adrien},
  journal={Annales de l’Institut Henri Poincar{\'e} D},
  volume={4},
  number={3},
  pages={309--385},
  year={2017}
}

@article{CheIzy13,
  title={Holomorphic spinor observables in the critical Ising model},
  author={Chelkak, Dmitry and Izyurov, Konstantin},
  journal={Communications in Mathematical Physics},
  volume={322},
  number={2},
  pages={303--332},
  year={2013},
  publisher={Springer}
}

@phdthesis{Izy-phd,
  title={Holomorphic spinor observables and interfaces in the critical Ising mode Ph. D},
  author={Izyurov, Konstantin},
  year={2011},
  school={thesis}
}

@article{BenHon19,
  title={The scaling limit of critical Ising interfaces is CLE(3)},
  author={Benoist, Stéphane and Hongler, Clément},
  journal={The Annals of Probability},
  volume={47},
  number={4},
  pages={2049--2086},
  year={2019},
  publisher={Institute of Mathematical Statistics},
  IDS={BenoistHongler}
}

@article{SmirnovEtAlu,
  title={Convergence of Ising interfaces to Schramm SLE},
  author={Chelkak, Dmitry and Duminil-Copin, Hugo and Hongler, Clément and Kemppainen, Antti and Smirnov, Stanislav},
  journal={Comptes Rendus Mathematique},
  volume={352},
  number={2},
  pages={157--161},
  year={2014},
  publisher={Elsevier}
}

@article{KemSmi2,
  title={Conformal invariance in random cluster models. II. Full scaling limit as a branching SLE},
  author={Kemppainen, Antti and Smirnov, Stanislav},
  journal={arXiv preprint arXiv:1609.08527},
  year={2016}
}

@article{Smi-ICM06,
  title={Towards conformal invariance of 2D lattice models},
  author={Smirnov, Stanislav},
  journal={ In {\em International {C}ongress of {M}athematicians. {V}ol. {II}},
  pages 1421--1451. Eur. Math. Soc., Z\"urich},
  year={2006}
}

@article{CLR2,
  title={Bipartite dimer model: perfect t-embeddings and Lorentz-minimal surfaces},
  author={Chelkak, Dmitry and Laslier, Beno{\^\i}t and Russkikh, Marianna},
  journal={arXiv preprint arXiv:2109.06272},
  year={2021}
}

@article{park-iso,
  title={Convergence of fermionic observables in the massive planar FK-Ising model},
  author={Park, SC},
  journal={arXiv preprint arXiv:2103.04649},
  year={2021}
}

@article{FK,
  title={On the random-cluster model: I. Introduction and relation to other models},
  author={Fortuin, Cornelius Marius and Kasteleyn, Piet W},
  journal={Physica},
  volume={57},
  number={4},
  pages={536--564},
  year={1972},
  publisher={Elsevier}
}

@article{edwards1988generalization,
  title={Generalization of the fortuin-kasteleyn-swendsen-wang representation and monte carlo algorithm},
  author={Edwards, Robert G and Sokal, Alan D},
  journal={Physical review D},
  volume={38},
  number={6},
  pages={2009},
  year={1988},
  publisher={APS}
}

@article{Dubedat-bos,
  title={Exact bosonization of the Ising model},
  author={Dub{\'e}dat, Julien},
  journal={arXiv preprint arXiv:1112.4399},
  year={2011}
}

@book{palmer2007planar,
  title={Planar Ising Correlations},
  author={Palmer, John},
  volume={49},
  year={2007},
  publisher={Springer Science \& Business Media}
}

@article{CIM-universality,
  title={Universality of spin correlations in the Ising model on isoradial graphs},
  author={Chelkak, Dmitry and Izyurov, Konstantin and Mahfouf, R{\'e}my},
  journal={The Annals of Probability},
  volume={51},
  number={3},
  pages={840--898},
  year={2023},
  publisher={Institute of mathematical statistics}
}

@article{BdTR1,
  title={The Z-invariant massive Laplacian on isoradial graphs},
  author={Boutillier, C{\'e}dric and de Tiliere, B{\'e}atrice and Raschel, Kilian},
  journal={Inventiones mathematicae},
  volume={208},
  number={1},
  pages={109--189},
  year={2017},
  publisher={Springer}
}

@article{BdTR2,
  title={The Z-invariant Ising model via dimers},
  author={Boutillier, C{\'e}dric and Tili{\`e}re, B{\'e}atrice de and Raschel, Kilian},
  journal={Probability Theory and Related Fields},
  volume={174},
  number={1},
  pages={235--305},
  year={2019},
  publisher={Springer}
}

@article{ChSmi1,
  title={Discrete complex analysis on isoradial graphs},
  author={Chelkak, Dmitry and Smirnov, Stanislav},
  journal={Advances in Mathematics},
  volume={228},
  number={3},
  pages={1590--1630},
  year={2011},
  publisher={Elsevier}
}

@inproceedings {Ch-ICM18,
    AUTHOR = {Chelkak, Dmitry},
     TITLE = {Planar {I}sing model at criticality: state-of-the-art and
              perspectives},
 BOOKTITLE = {Proceedings of the {I}nternational {C}ongress of
              {M}athematicians---{R}io de {J}aneiro 2018. {V}ol. {IV}.
              {I}nvited lectures},
     PAGES = {2801--2828},
 PUBLISHER = {World Sci. Publ., Hackensack, NJ},
      YEAR = {2018},
   MRCLASS = {82C20 (30G25 35Q82 60J67)},
  MRNUMBER = {3966512},
}

@article{perk1980quadratic,
  title={Quadratic identities for Ising model correlations},
  author={Perk, Jacques HH},
  journal={Physics Letters A},
  volume={79},
  number={1},
  pages={3--5},
  year={1980},
  publisher={Elsevier}
}

@article {kadanoff-ceva,
    AUTHOR = {Kadanoff, Leo P. and Ceva, Horacio},
     TITLE = {Determination of an operator algebra for the two-dimensional
              {I}sing model},
   JOURNAL = {Phys. Rev. B (3)},
  FJOURNAL = {Physical Review. B. Condensed Matter. Third Series},
    VOLUME = {3},
      YEAR = {1971},
     PAGES = {3918--3939},
      ISSN = {0163-1829},
   MRCLASS = {82.62},
  MRNUMBER = {389111},
MRREVIEWER = {P. Caldirola},
}

@article{Chetoolbox,
  title={Robust discrete complex analysis: a toolbox},
  author={Chelkak, Dmitry},
  journal={The Annals of Probability},
  volume={44},
  number={1},
  pages={628--683},
  year={2016},
  publisher={Institute of Mathematical Statistics}
}

@article {CHI,
    AUTHOR = {Chelkak, Dmitry and Hongler, Cl\'ement and Izyurov, Konstantin},
     TITLE = {Conformal invariance of spin correlations in the planar
              {I}sing model},
   JOURNAL = {Ann. of Math. (2)},
  FJOURNAL = {Annals of Mathematics. Second Series},
    VOLUME = {181},
      YEAR = {2015},
    NUMBER = {3},
     PAGES = {1087--1138},
      ISSN = {0003-486X},
   MRCLASS = {82B20},
  MRNUMBER = {3296821},
MRREVIEWER = {Adnene Besbes},
       DOI = {10.4007/annals.2015.181.3.5},
       URL = {http://dx.doi.org/10.4007/annals.2015.181.3.5},
}

@article {ChSmi2,
    AUTHOR = {Chelkak, Dmitry and Smirnov, Stanislav},
     TITLE = {Universality in the 2{D} {I}sing model and conformal
              invariance of fermionic observables},
   JOURNAL = {Invent. Math.},
  FJOURNAL = {Inventiones Mathematicae},
    VOLUME = {189},
      YEAR = {2012},
    NUMBER = {3},
     PAGES = {515--580},
      ISSN = {0020-9910},
     CODEN = {INVMBH},
   MRCLASS = {82B20 (30C20 30C35 60J67 60K35)},
  MRNUMBER = {2957303},
MRREVIEWER = {Ben Dyhr},
       DOI = {10.1007/s00222-011-0371-2},
       URL = {http://dx.doi.org/10.1007/s00222-011-0371-2},
}

@article{DMT21,
  title={Planar random-cluster model: fractal properties of the critical phase},
  author={Duminil-Copin, Hugo and Manolescu, Ioan and Tassion, Vincent},
  journal={Probability Theory and Related Fields},
  volume={181},
  number={1},
  pages={401--449},
  year={2021},
  publisher={Springer}
}

@article{duminil-parafermions,
  title={Parafermionic observables and their applications to planar statistical physics models},
  author={Duminil-Copin, Hugo},
  journal={Ensaios Matematicos},
  volume={25},
  pages={1--371},
  year={2013}
}

\end{document}